\newtheorem{theorem}{Theorem}[section]
\newtheorem{lemma}[theorem]{Lemma}
\newtheorem{corollary}[theorem]{Corollary}
\newtheorem*{corNormal}{Corollary}
\newtheorem*{thmNoLabel}{Theorem}
\theoremstyle{definition}
\newtheorem{definition}[theorem]{Definition}
\newtheorem{example}[theorem]{Example}
\newtheorem{remark}[theorem]{Remark}
\newcommand{\N}{\mathbb{N}}
\newcommand{\Z}{\mathbb{Z}}
\newcommand{\C}{\mathbb{C}}
\newcommand{\F}{\mathbb{F}}		
\newcommand{\K}{\mathbb{K}}		
\newcommand{\scalA}[2]{\,_A\langle #1,#2 \rangle}
\newcommand{\scalB}[2]{\langle #1,#2 \rangle_B}
\newcommand{\scalR}[3]{\langle #1,#2 \rangle_{#3}}
\newcommand{\scalL}[3]{\,_{#1}\langle #2,#3 \rangle}
\newcommand{\norm}[1]{\lVert #1 \rVert}
\newcommand{\cpthom}[4]{\mathcal{K}_{#1{\rm -}#2}(#3,#4)}
\newcommand{\cptEnd}[3]{\mathcal{K}_{#1{\rm -}#2}(#3)}
\newcommand{\mmm}[3]{{\rm end}_{#1{\rm -}#2}(#3)}
\newcommand{\bddHom}[4]{\mathcal{L}_{#1{\rm -}#2}(#3, #4)}
\newcommand{\bddEnd}[3]{\mathcal{L}_{#1{\rm -}#2}(#3)}
\newcommand{\U}[3]{U_{#1{\rm -}#2}(#3)}
\newcommand{\HA}[1]{\mathbb{H} \otimes #1}
\newcommand{\bibddEnd}[3]{\mathcal{L}_{#1#2}(#3)}
\newcommand{\biU}[3]{U_{#1#2}(#3)}
\newcommand{\twA}[1]{\!\,_{#1} A}
\newcommand{\Atw}[1]{A_{#1}}
\newcommand{\rtw}[2]{#1_{#2}}
\newcommand{\ltw}[2]{\!\,_{#2} #1}
\newcommand{\trivial}[1]{\underline{#1}}
\newcommand{\Hb}{\mathcal{H}}
\newcommand{\Ab}{\mathcal{A}}
\newcommand{\Hbop}{\mathcal{H}^{\rm op}}
\newcommand{\Nb}{\mathcal{N}}
\newcommand{\Qb}{\mathcal{Q}}
\newcommand{\Pb}{\mathcal{P}}
\newcommand{\Eb}{\mathcal{E}}
\newcommand{\Fb}{\mathcal{F}}
\newcommand{\Cuntz}[1]{\mathcal{O}(#1)}
\newcommand{\Aut}[1]{{\rm Aut}(#1)}
\newcommand{\Inn}[1]{{\rm Inn}(#1)}
\newcommand{\Pic}[1]{{\rm Pic}(#1)}
\newcommand{\id}[1]{{\rm id}_{#1}}
\newcommand{\Tw}[2]{\textsc{Twists}(#1,#2)}
\title{Twisted $K$-theory with coefficients in $C^*$-algebras }
\author{Ulrich Pennig}
\date{\today}
\begin{document}

\begin{abstract}
We introduce a twisted version of $K$-theory with coefficients in a $C^*$-algebra~$A$, where the twist is given by a new kind of gerbe, which we call Morita bundle gerbe. We use the description of twisted $K$-theory in the torsion case by bundle gerbe modules as a guideline for our noncommutative generalization. As it turns out, there is an analogue of the Dixmier-Douady class living in a nonabelian cohomology set and we give a description of the latter via stable equivalence classes of our gerbes. We also define the analogue of torsion elements inside this set and extend the description of twisted $K$-theory in terms of modules over these gerbes. In case $A$ is the infinite Cuntz algebra, this may lead to an interpretation of higher twists for $K$-theory. 
\end{abstract}
\maketitle

\section{Introduction}
Twisted $K$-theory first appeared in the early 1970s in a paper by Donovan and Karoubi as "K-theory with local coefficients" \cite{paper:DonovanKaroubi, paper:AtiyahSegal, paper:AtiyahSegal2}. It has by now become not only a well-understood example of a parametrized cohomology theory \cite{book:MaySigurdsson}, but also plays a fundamental role in the description of $D$-branes in string theory \cite{paper:DBranes}. 

The most prominent twists for $K$-theory over a space $M$ (which we will assume to be a compact manifold for simplicity) are classified by the third cohomology group $H^3(M,\Z)$, which can be seen as a $BBU(1)$-factor of the space $BBU_{\otimes}$ classifying all twists. A geometric description of these is given by $(S^1)$-bundle gerbes developed by Murray \cite{paper:BundleGerbes}, which are higher algebraic versions of line bundles and are up to an equivalence relation called stable isomorphism classified precisely by the third cohomology group via the Dixmier-Douady class of the gerbe \cite{paper:KTheoryBGM}. Bundle gerbes allow an easy description of connections on them, there is a Chern character for bundle gerbe modules, a Thom isomorphism and push-forward maps as well \cite{paper:CareyWang}.

The present paper takes a step towards a noncommutative generalization of twisted $K$-theory using bundle gerbes as a guideline. It was motivated by index theory: Let $A$ be a (unital) $C^*$-algebra, then the operator algebraic $K$-group $K_0(C(M,A))$ may be viewed as $K$-theory with coefficients in $A$ in the sense that it is the Grothendieck group of isomorphism classes of (finitely generated and projective) Hilbert $A$-module bundles over $M$. It appears in connection with important geometric invariants like the Rosenberg index \cite{paper:Rosenberg, paper:StableGLR, paper:MishchenkoFomenko}. In analogy with the case $A = \C$ we may ask whether it is possible to twist this theory and what the twists should look like. We answer the second question by a comparison with gerbes: Let $U_i$ be a good cover of a manifold $M$, then a gerbe provides a Morita self-equivalence of $C_0(U_{ij})$ in form of a line bundle $L_{ij} \to U_{ij}$ together with a bimodule isomorphism
\[
	C_0(U_{ijk},L_{ij}) \otimes_{C_0(U_{ijk})} C_0(U_{ijk}, L_{jk}) \to C_0(U_{ijk}, L_{ik})
\]
over the triple intersections. Speaking loosely, we may think of the line bundle as a pa\-ra\-me\-trized version of a pointwise $\C$-$\C$-Morita equivalence. So, what we need in case of $K$-theory with coefficients in $A$ is a parametrized version of $A$-$A$-Morita equivalences, which will be called twisted Morita bundles in this paper (see definition \ref{def:twistedMorita}). From there it is straightforward to define Morita bundle gerbes (see definition \ref{def:MoritaBG}), for which there are plenty examples: In fact, every continuous Busby-Smith twisted $G$-action on $A$ together with a principal $G$-bundle over $M$ yields a Morita bundle gerbe (see \cite{paper:BusbySmith} for the definition of these actions). Using the analogue of stable isomorphism, we define the set $\Tw{M}{A}$. After stabilizing the algebra, i.e.\ replacing $A$ by $A_s = A \otimes \K$ there is also an analogue of the Dixmier-Douady class, which now lives in the nonabelian cohomology set $\check{H}^1(M, U(A_s) \to \Aut{A_s})$, where $A_s = A \otimes \K$ and $U(A_s) \to \Aut{A_s}$ is a topological crossed module \cite{paper:Noohi}. It is in bijection with $\Tw{M}{A}$ and in case $A =\C$, it boils down to the third cohomology group and our class coincides with the previous one. Since $\Pic{A} = \Aut{A_s} / \Inn{A_s}$ we see a close relation between the Picard group and our set of twists. In fact, we would like to promote the picture to think about $\Pic{A}$ more in terms of the strict topological $2$-group associated to the crossed module $U(A_s) \to \Aut{A_s}$. Along the way, we prove the following structural result about $\Pic{A}$:
\begin{corNormal}
Let $A$ be a unital $C^*$-algebra and suppose $\Aut{A}$ is path-connected in the point-norm topology. Then the image of $\Aut{A}$ in $\Pic{A}$ is a normal subgroup. 
\end{corNormal}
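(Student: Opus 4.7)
The plan is to work inside $\Aut{A_s}$, where $A_s = A \otimes \K$, using the identification $\Pic{A} \cong \Aut{A_s}/\Inn{A_s}$ established earlier: under this identification, the image of $\Aut{A}$ corresponds to the projection of $\Aut{A} \otimes \id{\K} \subset \Aut{A_s}$, and since $\Inn{A_s}$ is normal in $\Aut{A_s}$ it suffices to show that for every $\phi \in \Aut{A_s}$ and every $\alpha \in \Aut{A}$ the conjugate $\psi := \phi \circ (\alpha \otimes \id{\K}) \circ \phi^{-1}$ lies in $(\Aut{A} \otimes \id{\K}) \cdot \Inn{A_s}$.

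I first fix a rank-one projection $p \in \K$ and, using path-connectedness of $\Aut{A}$, pick a point-norm continuous path $(\alpha_t)_{t \in [0,1]}$ from $\id{A}$ to $\alpha$. Then $\psi_t := \phi \circ (\alpha_t \otimes \id{\K}) \circ \phi^{-1}$ is a point-norm continuous path in $\Aut{A_s}$ with $\psi_0 = \id{A_s}$, so $t \mapsto \psi_t(1_A \otimes p)$ is a norm-continuous path of projections in $A_s$ starting at $1_A \otimes p$. The standard unitary lifting of a projection homotopy produces a continuous path $(u_t)_{t \in [0,1]}$ of unitaries in the unitization $\widetilde{A_s}$ with $u_0 = 1$ and $u_t(1_A \otimes p)u_t^* = \psi_t(1_A \otimes p)$. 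Setting $\gamma := \mathrm{Ad}(u_1^*) \circ \psi_1$ yields an automorphism of $A_s$ that fixes $1_A \otimes p$ and hence restricts to some $\beta \in \Aut{A}$ on the corner $(1_A \otimes p) A_s (1_A \otimes p) \cong A$.

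The crux is then to show that $\tau := (\beta^{-1} \otimes \id{\K}) \circ \gamma$ --- which by construction acts as the identity on the corner $A$ --- is inner. For this I would use the full right Hilbert $A_s$-module $M := (1_A \otimes p) A_s$, whose compact-endomorphism algebra is precisely $A$, so that $M$ is an $A$--$A_s$-imprimitivity bimodule. The induced isomorphism $\Pic{A_s} \cong \Pic{A}$ sends the class $[\tau]$ to the class of the corner compression $(1_A \otimes p)(A_s)_\tau(1_A \otimes p)$, and because $\tau$ fixes $1_A \otimes p$ and acts as the identity on $A$, the twisted left action $c \cdot x = \tau(c) x$ on this compression collapses to ordinary left multiplication for every $c \in A$. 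Hence the compression is $A$ with its standard $A$--$A$-bimodule structure, so $[\tau] = 1$ in $\Pic{A_s}$ and $\tau = \mathrm{Ad}(v)$ for some unitary $v$ in the multiplier algebra. Rearranging, $\psi_1 = \mathrm{Ad}(u_1) \circ (\beta \otimes \id{\K}) \circ \mathrm{Ad}(v) \in (\Aut{A} \otimes \id{\K}) \cdot \Inn{A_s}$, as required.

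The hardest part is this third step: the implication ``$\tau|_A = \id{A} \Rightarrow \tau$ inner''. Once translated via the imprimitivity bimodule $(1_A \otimes p) A_s$ into triviality of the corner-compressed bimodule, the argument is short, but one has to track carefully how the left and right twists in $(A_s)_\tau$ interact in order to see that the compression really returns $A$ with its ordinary bimodule structure. Everything else is a routine combination of the path-of-projections trick with the normality of $\Inn{A_s}$ in $\Aut{A_s}$.
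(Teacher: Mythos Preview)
Your argument is correct and reaches the same conclusion, but by a genuinely different route than the paper. The paper works directly with an arbitrary $A$-$A$-Morita equivalence $H$: it realizes $H$ as $tA^n$ for a projection $t \in M_n(A)$, invokes the preceding corollary (path-connectedness of $\Aut{A}$ forces $\sigma^{-1}(t)$ and $t$ to be unitarily equivalent in $M_n(A)$), and then writes down an explicit bimodule isomorphism $\rtw{H}{\sigma} \to \ltw{H}{\tau}$ for the automorphism $\tau = \psi^{-1} \circ {\rm Ad}_u \circ \sigma^{-1} \circ \psi$, from which $\rtw{H}{\sigma} \otimes_A H^* \simeq \rtw{A}{\tau^{-1}}$ follows immediately.

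Your approach instead passes through the Brown--Green--Rieffel identification $\Pic{A} \cong \Aut{A_s}/\Inn{A_s}$ and does the whole computation inside the automorphism group of the stabilization. The core mechanism is the same---a projection homotopy (here $t \mapsto \psi_t(1_A \otimes p)$) lifted to a unitary path, followed by a corner argument---but you trade the explicit bimodule calculation for the Rieffel correspondence of Picard groups along the full corner $(1_A \otimes p)A_s$. The paper's version is more self-contained (it needs only the corollary just proved and nothing about stabilization), while yours is more structural and makes transparent why the result is really a statement about $\Aut{A_s}/\Inn{A_s}$.

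One small point to tidy: in the paper's conventions $(A_s)_{\tau}$ carries the \emph{standard} left action and the $\tau$-twisted \emph{right} action, so in your third step it is the right action on the compressed bimodule that a priori looks twisted and collapses because $\tau$ fixes $a \otimes p$. Your conclusion is unaffected, but the sentence about ``the twisted left action $c \cdot x = \tau(c)x$'' should be rephrased accordingly.
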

The connectedness assumption is sufficient, but not necessary for ${\rm Out}(A)$ to be normal in $\Pic{A}$. For simple $C^*$-algebras carrying a unique trace which separates the projections in $A$, the quotient was identified with the $C^*$-algebraic analogue of the fundamental group in~\cite{paper:NawataWatatani}. It would be interesting to see connections between these two results.

Let ${\rm Fred}(A)$ be the Fredholm operators on $\HA{A}$ for a separable infinite dimensional Hilbert space $\mathbb{H}$, then
\[
	K_0(C(M,A)) \ \simeq \ [M, {\rm Fred}(A)]
\]
Like $PU(H)$ acts on ${\rm Fred}(H)$, we now have the group $\Aut{A_s}$ acting on ${\rm Fred}(A)$ by conjugation, since any automorphism $\sigma$ of $A_s$ extends to an automorphism of $M(A_s) = \bddEnd{}{A}{\HA{A}}$ and $\sigma \circ K \circ \sigma^{-1}$ is compact if $K$ is compact. Thus, twists should be representable by principal $\Aut{A_s}$-bundles. In fact, we apply the Packer-Raeburn stabilization trick to show that there is a surjective map $H^1(M, \Aut{A_s}) \to \Tw{M}{A}$. Thus, for every twist we get a (possibly non-unique) bundle of $C^*$-algebras affiliated with it.

Every Morita bundle gerbe with typical fiber $A$ gives rise to a nonabelian bundle gerbe in the sense of Aschieri, Cantini and Jur\v{c}o \cite{paper:JurcoNonAbelian} and vice versa. We connect our work to theirs. However, if the typical fiber varies along the connected components of the space, Morita bundle gerbes contain more information than just the $2$-group $U(A) \to \Aut{A}$. 

In analogy with the case of bundle gerbe modules, we then define twisted operator $K$-theory $K^0_{\Hb}(M)$ as the Grothendieck group of isomorphism classes of twisted Morita modules with respect to a twist $\Hb$ (definition \ref{def:MoritaModule}). This group only depends on the stable equivalence class of the twist, i.e.\ the element $[\Hb] \in \Tw{M}{A}$. Even though the latter is just a pointed set and not a group, it is still possible to give an analogue of torsion classes as the so called matrix stable elements, which lie in the image of the map $H^1(M, \Aut{M_n(A)}) \to \Tw{M}{A}$. We extend the operator algebraic description of twisted $K$-theory \cite{paper:RosenbergCTrace, paper:KTheoryBGM} to twisted operator $K$-theory by proving
\begin{thmNoLabel}
Let $\Hb$ be a representative of a matrix stable element $[\Hb] \in \Tw{M}{A}$ and let $\Ab \to M$ be a bundle of $C^*$-algebras affiliated with $\Hb$. Then we have
\[
	K^0_{\Hb}(M) \ \simeq\ K_0(C(M,\Ab))\ .
\]
\end{thmNoLabel}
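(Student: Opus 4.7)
The plan is to construct a symmetric monoidal equivalence between the category of twisted Morita $\Hb$-modules and the category of finitely generated projective Hilbert $C(M,\Ab)$-modules, and then invoke the Serre--Swan-type identification of the Grothendieck group of the latter with $K_0(C(M,\Ab))$.

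First, I would exploit matrix stability. Since $[\Hb]$ lies in the image of $H^1(M, \Aut{M_n(A)}) \to \Tw{M}{A}$ for some $n$, after refining a good cover $\{U_i\}$ of $M$ one may choose a representative of $\Hb$ whose local twisted Morita bundles are trivial of type $U_{ij} \times M_n(A)$ and whose transitions are a cocycle $\sigma_{ij}\colon U_{ij} \to \Aut{M_n(A)}$ together with coherence unitaries on the triple intersections. The affiliated bundle $\Ab \to M$ is then the $C^*$-algebra bundle obtained by gluing $U_i \times M_n(A)$ along the $\sigma_{ij}$, and $C(M,\Ab)$ is its algebra of continuous sections.

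Next, I would set up the equivalence. On one side, a twisted Morita $\Hb$-module $\Eb$ consists locally of Hilbert $M_n(A)$-module bundles $\Eb_i \to U_i$ together with gluing bimodule isomorphisms dictated by $\Hb$. Because $\Hb$ is matrix stable, this is precisely descent data for a locally trivial Hilbert $\Ab$-module bundle over $M$, which is equivalently a finitely generated projective Hilbert $C(M,\Ab)$-module by the $C^*$-algebraic version of Serre--Swan. Conversely, restricting any such Hilbert $C(M,\Ab)$-module to the $U_i$ produces local Hilbert $M_n(A)$-module bundles whose natural transitions are twisted by the same $\sigma_{ij}$, hence assemble into a twisted Morita $\Hb$-module. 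Morphisms on both sides correspond (the twist-intertwining condition becomes $C(M,\Ab)$-linearity), and the correspondence is additive, so it descends to an isomorphism of Grothendieck groups.

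The main obstacle I anticipate is the careful bookkeeping of the triple-intersection data: one must verify that the coherence unitaries on $U_{ijk}$ built into the definition of a Morita bundle gerbe correspond exactly, under the functor above, to the associativity of multiplication in the bundle $\Ab$, and that the Hilbert module inner products on the two sides match up. A subtler point is well-definedness, since $K^0_{\Hb}(M)$ should depend only on $[\Hb] \in \Tw{M}{A}$ and the affiliated $\Ab$ is only determined up to Morita equivalence over $M$; both independences reduce, however, to standard Morita invariance of $K_0$ combined with the observation, noted earlier in the introduction, that stable equivalence of twists induces Morita equivalence of the affiliated $C^*$-algebra bundles.
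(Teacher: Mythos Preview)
Your outline is essentially correct and lands on the same identification as the paper, but the route differs in a way worth flagging. The paper does not set up a categorical equivalence via \v{C}ech descent data; instead it uses the stable-equivalence invariance of $K^0_{\Hb}$ to replace $\Hb$ by the concrete principal-bundle model $\Hb = P^{[2]}\times A_{\varphi\circ g}$ of Example~\ref{ex:groupaction}, and then writes down the two maps explicitly. The forward map sends a twisted Morita module $\Eb$ to the projection $t_{\Eb}\in C(M,M_N(\Ab))$ cutting it out of a trivial module $\trivial{A^N}$; the backward map sends a projection $t$ to the sub-bundle $\Eb_t=\{(p,v)\in P\times A^N\mid t(p)v=v\}$, checked directly to be a twisted Morita module via the $G$-equivariance of $t$.

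What this buys the paper is that the real analytic content---that every finitely generated projective twisted Morita module embeds as a direct summand of a trivial one---is isolated as Lemma~\ref{lem:embTrivial} and proved by hand using a partition of unity and the local model of Example~\ref{ex:local}. In your approach this same step is hidden inside the phrase ``$C^*$-algebraic version of Serre--Swan'': to pass from a locally trivial Hilbert $\Ab$-module bundle to a finitely generated projective $C(M,\Ab)$-module you need exactly such an embedding, and for bundles of Hilbert modules over a noncommutative coefficient algebra this is not a result you can simply cite. So your proposal is sound, but you should expect to reprove Lemma~\ref{lem:embTrivial} (or an equivalent statement) rather than invoke Serre--Swan as a black box.

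Two minor points. First, since a matrix stable twist by definition lies in the image of $\check H^1(M,\Aut{M_n(A)})$, the representative you obtain is governed by a genuine $\Aut{M_n(A)}$-valued $1$-cocycle $\sigma_{ij}$ with $\sigma_{ij}\sigma_{jk}=\sigma_{ik}$; the ``coherence unitaries on triple intersections'' you anticipate are already trivial in this model, which is precisely what makes the descent to a bundle $\Ab$ work. Second, the ``symmetric monoidal'' structure is more than you need: the Grothendieck-group isomorphism only requires that your correspondence respect direct sums.
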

In the last section we consider the concrete example of Cuntz algebras $\Cuntz{n}$, for which at least the automorphism group is quite well understood. This way, we get twisted $K$-theory with coefficients in $\Z/(n-1)\Z$. The construction also applies to the $C^*$-algebra $\Cuntz{\infty}$ and our conjecture would be that the automorphisms of $\Cuntz{\infty}$ are related to the higher twists of (ordinary) $K$-theory, which arise from the full space $BBU_{\otimes}$ not only the $BBU(1)$-factor.

\section{Preliminaries about $C^*$-algebras and Hilbert $C^*$-modules}
This short section will review some basics about Hilbert $C^*$-(bi)modules and Morita equivalences without giving proofs. Details can be found in the book by Lance \cite{book:Lance} and in the paper by Paschke \cite{paper:Paschke}.

\begin{definition}\label{def:auttop}
Let $A$ be a $C^*$-algebra. $\Aut{A}$ will denote the group of $*$-automorphisms of $A$ equipped with the pointwise norm topology, generated by the semi-norms $p_a(\varphi) = \lVert \varphi(a) \rVert$. It is a topological group with respect to this topology.
\end{definition}
Let $U$ be a topological space and consider a continuous map $\Psi \colon U \times A \to A$, where $A$ is of course equipped with its norm topology, such that $\Psi$ is a $*$-automorphism at every point. It is easy to check that this induces a continuous map $\varphi \colon U \to \Aut{A}$. Using the fact that $*$-automorphisms preserve the $C^*$-norm it can be shown that the converse of this is also true, i.e.\ $\varphi \colon U \to \Aut{A}$ translates into a continuous map $\Psi \colon U \times A \to A$.

\begin{definition}\label{def:HilbMod}
Let $A$ be a $C^*$-algebra. A left $A$-module $H$ is called a \emph{Hilbert $A$-module} if it is equipped with an $A$-valued inner product $\scalA{\cdot}{\cdot}$, which satisfies the following conditions:
\begin{enumerate}[a)]
	\item $\scalA{\cdot}{\cdot}$ is $A$-linear in the left argument and $A$-antilinear in the right.
	\item $\scalA{x}{x} \geq 0$ for all $x \in H$, where equality only holds if and only if $x = 0$.
	\item $\scalA{x}{y} = \scalA{y}{x}^*$.
	\item $H$ is complete with respect to the norm given by $\lVert x \rVert = \lVert \scalA{x}{x} \rVert^{\frac 1 2}$.
\end{enumerate}
Correspondingly we can define a right Hilbert $B$-module with the only change, that the inner product $\scalB{\cdot}{\cdot}$ should now be $B$-linear in the right argument and anti-linear in the left. If $H$ and $H'$ are two left Hilbert $A$-modules we denote by $\bddHom{A}{}{H}{H'}$ the \emph{bounded adjointable} operators $T \colon H \to H'$ and we define $\bddEnd{A}{}{H}= \bddHom{A}{}{H}{H}$. Similarly $\cpthom{A}{}{H}{H'}$ will denote the \emph{compact adjointable} left $A$-linear operators, likewise let $\cptEnd{A}{}{H}$ be the compact adjointable endomorphisms. If $H$ and $H'$ are right Hilbert $A$-modules, the corresponding spaces will be $\bddHom{}{A}{H}{H'}$, $\bddEnd{}{A}{H}$, $\cpthom{}{A}{H}{H'}$ and $\cptEnd{}{A}{H}$.
\end{definition}

\begin{definition}\label{def:unitarygroup}
The \emph{unitary group} of a left Hilbert $A$-module $H$ is the group of isometric isomorphisms $u \colon H \to H$ equipped with the \emph{strict topology} generated by the semi-norms $p_v(u) = \lVert u(v) \rVert$, $p_v^*(u) = \lVert u^*(v) \rVert$ for all $v \in H$ with respect to which it is a topological group. We will use the notation $\U{A}{}{H} = U(\bddEnd{A}{}{H})$ for it. Analogously for right Hilbert $A$-modules. 
\end{definition}

Just as for the group $\Aut{A}$, we can show that every continuous map $U \times H \to H$, which is an isometric isomorphism at every point $x \in U$, yields a continuous map $U \to \U{A}{}{H}$ and vice versa. Note that $U(A) = \U{A}{}{A}$ is the unitary group in the multiplier algebra $M(A) = \bddEnd{A}{}{A}$ of $A$.

\begin{remark}\label{rem:carefully}
A word of caution about the topologies on $U(A)$ and $\Aut{A}$ seems appropriate: The canonical map 
\[
	U(A) \to \Aut{A}
\]
sending a unitary to the corresponding inner automorphism is continuous with kernel $ZU(A)$ the center of $U(A)$, but the induced bijection on the quotient $U(A)/ZU(A) \to \Inn{A}$ is a homeomorphism if and only if $A$ is a continuous trace $C^*$-algebra \cite{paper:Phillips}.
\end{remark}

\begin{definition}\label{def:HilbBimod}
Let $A$ and $B$ be $C^*$-algebras. A \emph{Hilbert $A$-$B$-bimodule} is a vector space $H$, which is a right Hilbert $B$-module and a left Hilbert $A$-module, such that the $A$-, respectively $B$-valued inner products are compatible in the following sense:
\begin{equation} \label{eqn:switchscal}
\scalA{x}{y}\cdot z = x\cdot \scalB{y}{z} \quad \forall x,y,z \in H\ .
\end{equation}
A Hilbert $A$-$B$-bimodule $H$ is a \emph{Morita equivalence} if $A$ and $B$ act via compact operators and the left action map $A \to \cptEnd{}{B}{H}$ and the right action map $B^{\rm op} \to \cptEnd{A}{}{H}$ are isomorphisms of $C^*$-algebras. 
\end{definition}

\begin{remark} \label{rem:fingenproj}
If $A$ is a unital $C^*$-algebra and $H$ is an $A$-$B$-Morita equivalence, then the identity $\id{H} \in \cptEnd{}{B}{H}$ is a compact operator (it is the image of $1 \in A$). It follows that $\id{H}$ is in fact a finite rank operator, therefore $H$ is finitely generated and projective as a right Hilbert $B$-module. In particular, every $A$-$A$-Morita equivalence of a unital $C^*$-algebra $A$ is finitely generated and projective as a left as well as a right Hilbert $A$-module.
\end{remark}

\begin{definition}\label{def:conjugatebimodule}
Let $H$ be a Hilbert $A$-$B$-bimodule and let $H^*$ be the conjugate vector space of $H$. We will denote the elements of $H^*$ by $v^*$ for $v \in H$ just to distinguish them from elements of $H$. $H^*$ is equipped with the $B$-$A$-bimodule structure $a \cdot x^* \cdot b = (b^*xa^*)^*$ and the inner products $\scalR{x^*}{y^*}{A} = \scalA{x}{y}$ and $\scalL{B}{x^*}{y^*} = \scalB{x}{y}$. This way, $H^*$ becomes a Hilbert $B$-$A$-bimodule, which is called the bimodule \emph{conjugate} to $H$. 
\end{definition}

\begin{definition}\label{def:tensorproduct}
Let $H$ be a Hilbert $A$-$B$-bimodule, $K$ be a Hilbert $B$-$C$-bimodule. Their algebraic tensor product $H \otimes_B^{\rm alg} K$ over $B$ is an inner product bimodule, where the inner products on elementary tensors look like 
\begin{eqnarray*}
	\scalR{h_1 \otimes k_1}{h_2 \otimes k_2}{C} = \scalR{k_1}{\scalR{h_1}{h_2}{B}\cdot k_2}{C} \ ,\\
	\scalL{A}{h_1 \otimes k_1}{h_2 \otimes k_2} = \scalL{A}{h_1\cdot \scalL{B}{k_1}{k_2}}{h_2} \ .
\end{eqnarray*}
The completion with respect to the norm induced by either of the inner products is a Hilbert $A$-$C$-bimodule.
\end{definition}

\begin{remark}
As is well known, an $A$-$B$-Morita equivalence $H$ can be understood as generalized morphism between the $C^*$-algebras $A$ and $B$. If $K$ is a $B$-$C$-Morita equivalence, the composition of $H$ and $K$ is defined as $H \otimes_B K$. Let $I_A = \scalA{H}{H}$ be the closed linear span of all inner products. This is a closed two-sided ideal of $A$, therefore also a Hilbert $A$-$A$-bimodule, and $\scalA{\cdot}{\cdot}$ yields a bimodule isomorphism
\[
	T_A \colon H \otimes_B H^* \to I_A \quad , \quad x \otimes y^* \mapsto \scalA{x}{y}\ .
\]
Define $I_B$ analogously. Then we have $I_A \otimes_A H \simeq H$ and $H \otimes_B I_B \simeq H$ via the canonical maps. Thus, equation (\ref{eqn:switchscal}) is equivalent to 
\begin{equation}
	H \to H \otimes_B I_B \overset{\id{H} \otimes T_B^*}{\longrightarrow} H \otimes_B (H^* \otimes_A H) \to (H \otimes_B H^*) \otimes_A H \overset{T_A \otimes \id{H}}{\longrightarrow} I_A \otimes_A H \to H
\end{equation}
being the identity. Furthermore, equation (\ref{eqn:switchscal}) implies $I_A \simeq \cptEnd{}{B}{H}$ and $I_B \simeq \cptEnd{A}{}{H}$ as is shown for example in \cite[Proposition 1.10]{paper:QuasiMultAndEmb}. In case $H$ is a Morita equivalence, the inner products induce bimodule isomorphisms $H \otimes_B H^* \simeq A$ and $H^* \otimes_A H \simeq B$. 
\end{remark}

\begin{definition}\label{def:PicardGroup}
By the last remark, the isomorphism classes of  $A$-$A$-Morita equivalences form a group, which we will call the \emph{Picard group} $\Pic{A}$. A left Hilbert $A$-module $H$ is called \emph{full}, if $I_A=A$ with the notation from the previous remark.
\end{definition}

\begin{example} \label{ex:autTwist}
Let $H$ be a Hilbert $A$-$B$-bimodule and $\tau \in \Aut{B}$. Then we define $\rtw{H}{\tau}$ to be the induced Hilbert $A$-$B$-bimodule with the right $B$-module structure twisted by $\tau$, i.e.\ $v\cdot b = v\tau(b)$ and the $B$-linear inner product given by $\scalB{v}{w}^{\tau} = \tau^{-1}(\scalB{v}{w})$. The $A$-linear inner product is left unchanged. For $\sigma \in \Aut{A}$, let $\ltw{H}{\sigma}$ be defined analogously. In particular, the Hilbert $A$-$A$-bimodule $\twA{\sigma}$ is a Morita (self-)equivalence. Analogously, let $\Atw{\sigma}$ be the Hilbert $A$-$A$-module with the ordinary left action of $A$ and $\sigma$-twisted right action. The automorphism $\sigma$ induces a bimodule isomorphism $\Atw{\sigma^{-1}} \to \twA{\sigma}$. Moreover, this construction is compatible with the tensor product in the following sense: Let $\sigma, \tau \in \Aut{A}$. Then
\[
	\Atw{\sigma} \otimes_A \Atw{\tau} \to \Atw{\sigma \circ \tau}  \quad ; \quad x \otimes y \mapsto x\sigma(y)
\]
induces a Hilbert $A$-$A$-bimodule isomorphism. The situation for more general bimodules $H$ is more complicated, for example $\rtw{H}{\sigma^{-1}}$ is in general \emph{not} isomorphic to $\ltw{H}{\sigma}$. There are, however, canonical isomorphisms $(\rtw{H}{\sigma})^* \to \ltw{(H^*)}{\sigma}$ and $\rtw{H}{\tau} \otimes_B \rtw{B}{\sigma} \to \rtw{H}{\tau \circ \sigma}$.

\end{example}

Due to the two inner products, there are essentially two ways of forming the adjoint of an operator. The next lemma shows that in the case of \emph{endomorphisms}, they coincide, which allows us to define the unitary group of a Hilbert bimodule.
\begin{lemma}\label{lem:unitary}
Let $H$ be a Hilbert $A$-$B$-bimodule. If an $A$-$B$-bilinear operator $T \colon H \to H$ has an adjoint $T^{*,A}$ with respect to the $A$-valued inner product, then it also has an adjoint $T^{*,B}$ with respect to the $B$-valued inner product and $T^{*,A} = T^{*,B}$. Moreover the left multiplication by $a \in A$ is adjointable for the inner product $\scalB{\cdot}{\cdot}$ with adjoint $a^*$ and similarly for the right multiplication by $b \in B$.
\end{lemma}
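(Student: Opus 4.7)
The plan is to leverage the compatibility identity (\ref{eqn:switchscal}) to transport the $A$-adjointness of $T$ into a $B$-adjointness statement, and then use an approximate unit in the closed two-sided ideal $I_B = \overline{\mathrm{span}}\{\scalB{u}{v} : u,v \in H\} \subseteq B$ to eliminate a residual obstruction.

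First I would fix arbitrary $x,y,z \in H$ and compare the two elements $z\cdot\scalB{Tx}{y}$ and $z\cdot\scalB{x}{T^{*,A}y}$ of $H$. For the first, an application of (\ref{eqn:switchscal}) rewrites it as $\scalA{z}{Tx}\cdot y$; taking the $*$-involution of the defining equation $\scalA{Tx}{y'} = \scalA{x}{T^{*,A}y'}$ gives $\scalA{z}{Tx} = \scalA{T^{*,A}z}{x}$, so a second use of (\ref{eqn:switchscal}) lands me on $T^{*,A}z\cdot\scalB{x}{y}$. For the second, (\ref{eqn:switchscal}) gives $\scalA{z}{x}\cdot T^{*,A}y$; now the $A$-linearity of $T^{*,A}$ on the left lets me pull $\scalA{z}{x}$ inside, a third use of (\ref{eqn:switchscal}) converts this to $T^{*,A}(z\cdot\scalB{x}{y})$, and the $B$-linearity of $T^{*,A}$ on the right finally yields $T^{*,A}z\cdot\scalB{x}{y}$ again. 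Hence $z\cdot b = 0$ for every $z\in H$, where $b := \scalB{Tx}{y} - \scalB{x}{T^{*,A}y}$.

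Second, I observe that both summands lie in $I_B$, so $b\in I_B$. The identity $z\cdot b = 0$ for all $z$ gives $\scalB{u}{v}\cdot b = \scalB{u}{v\cdot b} = 0$ for all $u,v\in H$, hence $c\cdot b = 0$ for every $c\in I_B$ by density. Choosing an approximate unit $(e_\lambda)$ for the $C^*$-algebra $I_B$ and using $b\in I_B$ gives $b = \lim_\lambda e_\lambda b = 0$. This is the equality $\scalB{Tx}{y} = \scalB{x}{T^{*,A}y}$, which is exactly the assertion that $T^{*,A}$ is also a $\scalB{\cdot}{\cdot}$-adjoint of $T$, so $T^{*,B}$ exists and coincides with $T^{*,A}$.

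For the final claims, the same template applies to the non-bilinear operators $L_a$ and $R_b$. For left multiplication by $a\in A$, the chain $z\cdot\scalB{a\cdot x}{y} = \scalA{z}{a\cdot x}\cdot y = \scalA{z}{x}a^*\cdot y = \scalA{z}{x}\cdot(a^*\cdot y) = z\cdot\scalB{x}{a^*\cdot y}$ shows the difference lies in $I_B$ and is killed by $H$, so the approximate-unit argument gives $\scalB{a\cdot x}{y} = \scalB{x}{a^*\cdot y}$. The right-multiplication case is symmetric, working instead inside $I_A = \overline{\mathrm{span}}\,\scalA{H}{H}$ and using the antilinearity $\scalB{y\cdot b^*}{z} = b\scalB{y}{z}$. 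The main obstacle is simply the bookkeeping: (\ref{eqn:switchscal}) is a one-way exchange formula, there is no direct rule for commuting the $A$-inner product past a right $B$-action, so every step must route through this identity in precisely the form stated; once this discipline is maintained the ideal/approximate-unit conclusion is automatic.
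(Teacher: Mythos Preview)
Your proof is correct and follows essentially the same route as the paper's: both arguments repeatedly apply the compatibility identity (\ref{eqn:switchscal}) together with the $A$- and $B$-linearity of $T^{*,A}$ to show that $z\cdot\bigl(\scalB{Tx}{y}-\scalB{x}{T^{*,A}y}\bigr)=0$ for every $z\in H$, and then kill the difference via an approximate unit for $I_B$. Your write-up is in fact slightly more careful than the paper's in deriving $e_\lambda b = 0$ from $z\cdot b = 0$ (the paper writes $\alpha e_\lambda = 0$, which should read $e_\lambda\alpha = 0$), but the substance is identical.
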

\begin{proof}\label{pf:unitary}
For every $x,z \in H'$ and $y \in H$ we have
\begin{align*}
	x\cdot \scalB{Ty}{z} &= \scalA{x}{Ty}\cdot z = \scalA{T^*x}{y}\cdot z = T^*(x) \cdot \scalB{y}{z} = T^*(x \cdot \scalB{y}{z}) \\ 
	& = T^*(\scalA{x}{y} \cdot z) = \scalA{x}{y}\cdot T^*z = x\cdot \scalB{y}{T^*z}
\end{align*}
Note that $\alpha = \scalB{Ty}{z} - \scalB{y}{T^*z} \in I_B$, i.e.\ the two-sided closed ideal spanned by linear combinations of $B$-valued inner products. Choose an approximate unit $e_{\lambda}$ for $I_B$. Since $\alpha e_{\lambda} = 0$ by the previous calculation we have
\[
\norm{\alpha} = \lim_{\lambda}\norm{\alpha e_\lambda - \alpha} = 0\ . 
\]
A similar calculation with $\beta = \scalB{ay}{z} - \scalB{y}{a^*z} \in I_B$ proves the second assertion.
\end{proof}

\begin{definition}
By lemma \ref{lem:unitary} there is a well-defined notion of \emph{bounded adjointable endomorphism} of a Hilbert $A$-$B$-bimodule and the algebra of those will be denoted by $\bibddEnd{A}{B}{H}$. Moreover we define $\biU{A}{B}{H}$ to be the unitary group of this algebra.
\end{definition}

\section{Morita bundle gerbes}
Before we get to the definition of Morita bundle gerbes we need to clarify some technical issues concerning bimodules with bimodule structure parametrized by a space $U$. Throughout this chapter we will mostly consider unital $C^*$-algebras, even though we can formulate the technical theorems in a slightly more general way using the definition of $A$-finite rank Hilbert modules. We will often denote a trivial bundle with fiber $F$ by $\trivial{F}$.

\begin{definition}\label{def:Afinrank}
Let $B$ be a $C^*$-algebra. We will say that a (right) Hilbert $B$-module $H$ is of $B$-finite rank if the identity $\id{H}$ is in $\cptEnd{}{B}{H}$.
\end{definition}

The following theorem was proven by Exel in \cite{paper:Exel}
\begin{theorem}\label{thm:AfinRank}
	$H$ is of $B$-finite rank if and only if there exists an idempotent element $p \in M_n(B)$, such that $H \simeq pB^n$.
\end{theorem}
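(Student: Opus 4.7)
The plan is to prove the two directions independently: the ``if'' direction by direct computation using the standard basis of $B^n$, and the ``only if'' direction by manufacturing an idempotent in $M_n(B)$ from any sufficiently good finite-rank approximation of $\id{H}$. Throughout I write $\theta_{x,y}$ for the rank-one operator $z \mapsto x \scalB{y}{z}$, so that $\cptEnd{}{B}{H}$ is by definition the norm closure of the finite linear combinations of such operators.

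For the ``if'' direction, assume $H \simeq pB^n$ for an idempotent $p \in M_n(B)$. After replacing $p$ by a self-adjoint projection $q$ similar to it in $M_n(B)$ (so that $pB^n \simeq qB^n$ as right Hilbert $B$-modules, using the standard $C^*$-algebraic fact that every idempotent is similar to a projection), the standard basis $e_1, \ldots, e_n$ of $B^n$ satisfies $\id{B^n} = \sum_{i=1}^n \theta_{e_i, e_i} \in \cptEnd{}{B}{B^n}$. For $v \in qB^n$ the computation
\[
	\sum_{i=1}^n q e_i \scalB{q e_i}{v} \ = \ q \sum_{i=1}^n e_i \scalB{e_i}{qv} \ = \ q(qv) \ = \ v
\]
exhibits $\id{qB^n} = \sum_i \theta_{qe_i, qe_i}$ as an element of $\cptEnd{}{B}{qB^n}$.

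For the ``only if'' direction, starting from $\id{H} \in \cptEnd{}{B}{H}$, I would pick a finite-rank approximation $K = \sum_{i=1}^m \theta_{u_i, v_i}$ with $\norm{\id{H} - K} < \tfrac{1}{2}$, so that $K$ is invertible in $\bddEnd{}{B}{H}$ by a Neumann series. Define the adjointable maps
\[
	\phi \colon B^m \to H, \ \ \phi(b_1, \ldots, b_m) = \sum_i u_i b_i, \qquad \psi \colon H \to B^m, \ \ \psi(v) = (\scalB{v_i}{v})_{i=1}^m,
\]
with adjoints $\phi^*(v) = (\scalB{u_i}{v})_i$ and $\psi^*(b) = \sum_i v_i b_i$; a direct computation gives $\phi\psi = K$. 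Setting $q := \psi K^{-1} \phi \in \bddEnd{}{B}{B^m} = M_m(B)$ yields $q^2 = \psi K^{-1} (\phi\psi) K^{-1} \phi = \psi K^{-1} \phi = q$, so $q$ is an idempotent, and a short check shows that $\phi|_{qB^m} \colon qB^m \to H$ is an isomorphism of right Hilbert $B$-modules with inverse $\psi K^{-1}$.

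The step that requires some insight rather than bookkeeping is the use of invertibility: compactness of $\id{H}$ provides not merely a finite-rank approximation $K$, but one with $\norm{\id{H} - K} < 1$, and it is precisely the resulting invertibility of $K$ via the Neumann series that allows the adjointable splitting $\psi K^{-1}$ of $\phi$ and so presents $H$ as a complemented summand of the free module $B^m$. The only other non-formal ingredient is the reduction of an arbitrary idempotent to a similar self-adjoint projection in the ``if'' direction; everything else — adjointability of $\phi$ and $\psi$, the identity $\phi\psi = K$, and the idempotent and isomorphism identities — is formal manipulation with rank-one operators and inner products.
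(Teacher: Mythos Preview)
The paper does not prove this theorem; it merely records it and attributes the result to Exel via a citation. So there is no in-paper argument to compare against, and your proof stands on its own.

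Your argument is correct and is in fact the standard one. Two small remarks. First, the identification $\bddEnd{}{B}{B^m} = M_m(B)$ you invoke holds only for unital $B$; in general $\bddEnd{}{B}{B^m} = M_m(M(B))$. This does not damage the proof, because the matrix entries of your idempotent $q = \psi K^{-1}\phi$ are explicitly $q_{ij} = \scalB{v_i}{K^{-1}(u_j)} \in B$, so $q \in M_m(B)$ regardless; you may want to say it this way rather than appeal to the equality of endomorphism algebras. Second, the map $\phi|_{qB^m}$ you exhibit is an adjointable $B$-linear bijection with adjointable inverse but not an isometry; to get an honest Hilbert-module isomorphism one applies polar decomposition to produce a unitary. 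Both points are routine, and with them in place your proof is complete.
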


Twisting the $B$-module structure can be done continuously over a space $U$. We will show that under mild assumptions the resulting spaces can be embedded into a trivial Hilbert $B$-module bundle with fiber $\HA{B}$, so it is possible to untwist the right $B$-module structure. In case the fibers are of $B$-finite rank, we will see that the resulting bundle is in fact locally trivial. The setup is as follows: Let $U$ be a topological space and $\sigma \colon U \to \Aut{B}$ be a continuous map. Let $H$ be a countably generated right Hilbert $B$-module and let $\trivial{\rtw{H}{\sigma}} = U \times H$ be the bundle of right Hilbert $B$-modules over $U$, where the fiber $H_x$ at $x \in U$ carries the right Hilbert $B$-module structure of $\rtw{H}{\sigma(x)}$. Since $\sigma(x)$ preserves the inner product for every $x \in U$, the space $\trivial{\rtw{H}{\sigma}}$ carries the topology of a trivial bundle of Banach spaces over $U$. 
\begin{lemma} \label{lem:emb}
Let $U$, $\sigma$ and $H$ be as above, then there is an embedding
\[
	\Psi \colon \trivial{\rtw{H}{\sigma}} \to \trivial{\HA{B}}
\]
which is a homeomorphism onto a subspace $\mathcal{K} \subset \trivial{\HA{B}}$, preserves the fibers and is a morphism of Hilbert $B$-modules on each of them. In case $H$ is of $B$-finite rank, $\mathbb{H}$ may be chosen to be finite-dimensional.
\end{lemma}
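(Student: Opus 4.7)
The plan is to reduce to the untwisted Kasparov stabilisation picture and then re-absorb the twisting by acting on the fibre algebra rather than on $H$. Concretely, since $H$ is countably generated, Kasparov's stabilisation theorem provides an isometric right $B$-linear embedding $\iota\colon H \hookrightarrow \HA{B}$ of the \emph{untwisted} module $H$ onto a closed submodule of $\HA{B}$. Using this, I would define
\[
\Psi\colon \trivial{\rtw{H}{\sigma}} \to \trivial{\HA{B}}, \qquad (x,v) \longmapsto \bigl(x,\,(\id{\mathbb{H}}\otimes \sigma(x)^{-1})(\iota(v))\bigr).
\]
The auxiliary operator $\id{\mathbb{H}}\otimes \sigma(x)^{-1}$ is a well-defined isometry on $\HA{B}$ because $\sigma(x)^{-1}$ is a $*$-automorphism of $B$ and hence preserves the $B$-valued inner product $\scalB{\cdot}{\cdot}$.

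First I would check that $\Psi$ is fibrewise a morphism of right Hilbert $B$-modules: if $v\in H$ and $b\in B$, then the twisted right action gives $v\cdot b = v\sigma(x)(b)$, so $\iota(v\cdot b) = \iota(v)\sigma(x)(b)$, and applying $\id{\mathbb{H}}\otimes \sigma(x)^{-1}$ transforms this into $\Psi(x,v)\cdot b$ with respect to the ordinary untwisted right $B$-action on $\HA{B}$. A parallel computation shows
\[
\scalR{\Psi(x,v)}{\Psi(x,w)}{B} = \sigma(x)^{-1}\bigl(\scalB{\iota(v)}{\iota(w)}\bigr) = \sigma(x)^{-1}(\scalB{v}{w}) = \scalB{v}{w}^{\sigma(x)},
\]
so $\Psi$ preserves the twisted $B$-valued inner products fibrewise and is in particular injective. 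For continuity of $\Psi$, I would use a standard $3\varepsilon$-argument: on elementary tensors $e\otimes b$ the map $x\mapsto (\id{}\otimes \sigma(x)^{-1})(e\otimes b)$ is continuous by the definition of the point-norm topology on $\Aut{B}$, and since each $\id{}\otimes \sigma(x)^{-1}$ is an isometry this extends to joint continuity in $(x,w)\in U\times \HA{B}$. Injectivity plus fibrewise isometry also identifies the inverse on the image $\mathcal{K}=\Psi(\trivial{\rtw{H}{\sigma}})$ as $(x,w')\mapsto (x,\iota^{-1}((\id{}\otimes \sigma(x))(w')))$, which is continuous by the same argument (using that $\sigma(x)\mapsto \sigma(x)^{-1}$ is continuous in a topological group and that $\iota^{-1}$ is an isometry on its closed domain), so $\Psi$ is a homeomorphism onto $\mathcal{K}$.

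For the $B$-finite rank case, Theorem \ref{thm:AfinRank} provides an idempotent $p\in M_n(B)$ with $H\simeq pB^n$; replacing $p$ by a similar projection inside $M_n(B)$, $H$ embeds isometrically as a right Hilbert $B$-module into $B^n=\mathbb{C}^n\otimes B$, so I can run the whole construction with the finite-dimensional $\mathbb{H}=\mathbb{C}^n$ in place of the standard infinite-dimensional one. The main technical point to be careful about is the continuity of $(x,w)\mapsto (\id{\mathbb{H}}\otimes \sigma(x)^{-1})(w)$ when $w$ is an arbitrary element of $\HA{B}$ (not a finite sum of elementary tensors); this is the only spot where the point-norm topology on $\Aut{B}$ is genuinely used, and the isometry property of each $\id{}\otimes \sigma(x)^{-1}$ is what keeps the $3\varepsilon$-estimate uniform in $x$.
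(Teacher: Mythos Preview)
Your proposal is correct and follows essentially the same route as the paper: both use Kasparov stabilisation to embed $H$ as $p_0(\HA{B})$, define the same map $\Psi(x,v)=(x,(\id{\mathbb{H}}\otimes\sigma(x)^{-1})(\iota(v)))$, and prove continuity by the same $3\varepsilon$-type argument (which the paper writes out explicitly for $\id{\K}\otimes\sigma$). The one substantive thing the paper adds that you omit is an explicit description of the image as $\mathcal{K}=\{(x,v)\mid p_x v=v\}$ with $p_x={\rm Ad}_{\sigma(x)^{-1}}(p_0)$; this costs nothing extra but is exactly what is needed in the next lemma to prove local triviality in the $B$-finite-rank case, so it is worth including.
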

\begin{proof}
Let $\mathbb{H}$ be a separable Hilbert space of countable dimension. By Kasparov's stabilization theorem, $H$ is isomorphic to a direct summand in $\HA{B}$, i.e.\ there is a projection $p_0 \in \bddEnd{}{B}{\HA{B}} = M(\cptEnd{}{B}{\HA{B}}) = M(\K \otimes B)$ in the stable multiplier algebra of $B$ with $H \simeq p_0(\HA{B})$ as right Hilbert $B$-modules. By theorem \ref{thm:AfinRank} we may choose $\mathbb{H}$ to be finite dimensional if $H$ is of $B$-finite rank.

Every element $\tau \in \Aut{B}$ is continuous as a map $B \to B$, when $B$ is equipped with the strict topology, therefore it extends to an automorphism ${\rm Ad}_{\tau} \in M(B)$. If the latter is identified with $\bddEnd{}{B}{B}$, then ${\rm Ad}_{\tau}(T) = \tau \circ T \circ \tau^{-1}$, hence the notation. Now let $\sigma \colon U \to \Aut{B}$ be continuous with respect to the pointwise norm topology on $\Aut{B}$. Denote by $\F$ the finite rank operators on $\mathbb{H}$ and let $\F \otimes_{\rm alg} B$ be the algebraic tensor product, which is dense in the $C^*$-algebraic tensor product $\K \otimes B$. We claim that $\id{\K} \otimes \sigma \colon U \to \Aut{\K \otimes B}$ is still continuous for the pointwise norm topology. Let $\varepsilon > 0$ and $A \in \K \otimes B$. Let $f_i \in \F$ be rank $1$-operators and $b_i \in B$ be such that $F = \sum_{i=1}^C f_i \otimes b_i \in \F \otimes_{\rm alg} B$ satisfies $\lVert A - F \rVert < \frac \varepsilon 4$. Then we have
\begin{align*}
	& \lVert (\id{\K} \otimes \sigma)(x_n)(A) -  (\id{\K} \otimes \sigma)(x)(A) \rVert \\
\leq \quad & 	\lVert (\id{\K} \otimes \sigma)(x_n)(A - F) \rVert  + \lVert  (\id{\K} \otimes \sigma)(x)(F- A) \rVert + \sum_{i=1}^C \lVert f_i \otimes (\sigma(x_n)- \sigma(x))(b_i) \rVert \\
\leq \quad & \frac{\varepsilon}{2} + \sum_{i=1}^C \lVert f_i \rVert \cdot \lVert(\sigma(x_n)- \sigma(x))(b_i) \rVert
\end{align*}
By the pointwise norm continuity of $\sigma$, we may choose an $N \in \N$ such that the last summand is smaller than $\frac{\varepsilon}{2}$ for all $n > N$ proving the claim. Let $T \in \bddEnd{}{B}{\HA{B}}$. By combining the two extensions we end up with 
\[
	{\rm Ad}_{\id{\K} \otimes \sigma}(T) \colon U \to M(\K \otimes B) = \bddEnd{}{B}{\HA{B}} \quad ; \quad x \mapsto {\rm Ad}_{\id{\K} \otimes \sigma(x)}(T)
\]
which is continuous if the target space is equipped with the strict topology. Slightly abusing the notation, we will denote this map by ${\rm Ad}_{\sigma}(T)$. Likewise, every $\tau \in \Aut{B}$ induces a Banach space isomorphism of $\HA{B}$ onto itself via
\[
	\widetilde{\tau} \colon \HA{B} \to \HA{B} \quad ; \quad v \otimes b \mapsto v \otimes \tau(b) \ .
\]
Denote by $\mathcal{L}(\HA{B})$ the bounded linear maps on the Banach space $\HA{B}$. A calculation similar to the one given above shows that the map induced by $\widetilde{\sigma}$, i.e. $\widetilde{\sigma} \colon U \to \mathcal{L}(\HA{B})$ is strongly continuous. As a map between Hilbert $B$-modules $\widetilde{\sigma}(x)$ maps $\HA{B}$ to $(\HA{B})_{\sigma(x)}$.

Let $p_x = {\rm Ad}_{\sigma(x)^{-1}}(p_0)$. Let $\iota_H \colon H \to \HA{B}$ be the injection onto $p_0(\HA{B})$ and set 
\begin{equation} \label{eqn:K}
	\mathcal{K} = \left\{ (x,v) \in U \times (\HA{B}) \ \mid \ p_x\,v = v \right\}\ .
\end{equation}
There is a map of spaces fibered in Hilbert $B$-modules, which is fiberwise a Hilbert $B$-module isomorphism. It is given by
\[
	\Psi \colon \trivial{\rtw{H}{\sigma}} \to \mathcal{K} \quad ; \quad (x,v) \mapsto \left(x,\widetilde{\sigma}(x)^{-1}\left(\iota_H(v)\right)\right)
\]
with inverse
\[
	\Phi \colon \mathcal{K} \to \trivial{\rtw{H}{\sigma}} \quad ; \quad (x,v) \mapsto \left(x,\iota_H^{-1}\left(\widetilde{\sigma}(x)(v)\right)\right)\ .
\]
Indeed, we have $p_x\,\widetilde{\sigma}(x)^{-1}(\iota_H(v)) = \widetilde{\sigma}(x)^{-1}(p_0\,\iota_H(v))$ for $v \in \rtw{H}{\sigma(x)}$, therefore $\Psi$ is well-defined. Since $p_0(\widetilde{\sigma}(x)(v)) = \widetilde{\sigma}(x)(p_x v) = \widetilde{\sigma}(x)(v)$ for $(x,v) \in \mathcal{K}_x$, $\Phi$ is well-defined. Using the strong continuity of $x \mapsto \widetilde{\sigma}(x)^{-1}$, it is easy to see that $\Phi$ and $\Psi$ are continuous. It is clear that they are inverse to each other and a small calculation shows that they preserve the right $B$-module structure and the inner products. Therefore we have embedded $\trivial{\rtw{H}{\sigma}}$ as a subspace $\mathcal{K}$ into the trivial right Hilbert $B$-module bundle $\trivial{\HA{B}}$ over~$U$.
\end{proof}

\begin{lemma}\label{lem:loctriv}
Let $U$, $H$ and $\sigma$ be as in lemma \ref{lem:emb} and suppose additionally that $H$ is of $B$-finite rank. Then each point $y \in U$ has a neighborhood $y \in V \subset U$ such that there exists a homeomorphism, which is fiberwise an isomorphism of right Hilbert $B$-modules 
\[
	\kappa_V \colon V \times \rtw{H}{\sigma(y)} \to \trivial{\rtw{H}{\sigma}}
\]
trivializing the bundle around $y$.
\end{lemma}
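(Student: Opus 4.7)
The plan is to use Lemma~\ref{lem:emb} to reduce the trivialization problem to one about locally conjugating projections in a unital $C^*$-algebra. Via the embedding $\Psi$, the bundle $\trivial{\rtw{H}{\sigma}}$ is homeomorphic over $U$ to $\mathcal{K}$, whose fibre over $x$ is $p_x(\HA{B})$ with $p_x = {\rm Ad}_{\sigma(x)^{-1}}(p_0)$. Since $H$ is of $B$-finite rank, Lemma~\ref{lem:emb} permits $\mathbb{H}$ to be finite-dimensional of some dimension $n$, so $p_0$ and all $p_x$ lie in the unital $C^*$-algebra $M_n(B) \cong \bddEnd{}{B}{\HA{B}}$. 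Crucially---and this is where the finite-rank hypothesis enters---the very same $\varepsilon/2$-estimate used in the proof of Lemma~\ref{lem:emb}, applied now to the single element $p_0 \in \F \otimes B \subset M_n(B)$, yields norm-continuity of $x \mapsto p_x$, not merely strict continuity.

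With norm-continuity in hand, I would shrink to an open neighbourhood $y \in V \subset U$ on which $\|p_x - p_y\| < 1$ and apply the standard projection-conjugation trick: set
\[
z_x \;:=\; p_x p_y + (1 - p_x)(1 - p_y) \;\in\; M_n(B),
\]
and verify by a short direct computation that $z_x^* z_x = 1 - (p_x - p_y)^2$, which is invertible and positive on $V$ and commutes with $p_y$. The polar-decomposition unitary
\[
u_x \;:=\; z_x\bigl(z_x^* z_x\bigr)^{-1/2}
\]
is then a well-defined, norm-continuous family of unitaries in $M_n(B)$ satisfying $u_y = 1$ and $u_x p_y u_x^* = p_x$. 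Regarded as a right $B$-linear unitary on $\HA{B}$ by left matrix multiplication, each $u_x$ restricts to an isomorphism of right Hilbert $B$-modules $p_y(\HA{B}) \to p_x(\HA{B})$.

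The local trivialization is then defined, in terms of the fibrewise maps $\Psi_y$ and $\Phi_x$ from Lemma~\ref{lem:emb}, by
\[
\kappa_V(x,v) \;:=\; \bigl(x,\, \Phi_x\bigl(u_x\, \Psi_y(v)\bigr)\bigr).
\]
Continuity of $\kappa_V$ follows from the (norm- and strict-) continuity of $\Psi$, $\Phi$ and $x \mapsto u_x$, and on each fibre $\kappa_V$ is a composition of three right Hilbert $B$-module isomorphisms. The explicit inverse
\[
\kappa_V^{-1}(x,w) \;=\; \bigl(x,\, \Phi_y\bigl(u_x^*\, \Psi_x(w)\bigr)\bigr)
\]
exhibits $\kappa_V$ as a homeomorphism onto the restriction of $\trivial{\rtw{H}{\sigma}}$ to $V$.

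The only genuinely subtle point---and the single place where the $B$-finite-rank hypothesis is essential---is the upgrade of the strict continuity of $x \mapsto p_x$ to norm continuity: without it, $(z_x^* z_x)^{-1/2}$ cannot be formed as a norm-continuous family and the local unitaries $u_x$ fail to exist, which is precisely why the lemma is not expected to hold for arbitrary countably generated $H$.
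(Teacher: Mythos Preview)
Your proof is correct and follows essentially the same route as the paper: reduce via Lemma~\ref{lem:emb} to the subbundle $\mathcal{K}$, use the $B$-finite-rank hypothesis to place $p_0$ in the unital algebra $M_n(B)$ so that $x \mapsto p_x$ is norm-continuous, and then conjugate nearby projections by the standard polar-decomposition unitary (your $z_x = p_xp_y + (1-p_x)(1-p_y)$ is exactly the paper's $v_x = 1 - p_x - p_y + 2p_xp_y$). The only cosmetic difference is that you spell out the trivialization through $\Phi$ and $\Psi$ while the paper works directly on $\mathcal{K}$.
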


\begin{proof}\label{pf:loctriv}
Fix $y \in U$ and let $\mathcal{K}$, $p_0$ and $p_x$ be defined as in lemma \ref{lem:emb} (see (\ref{eqn:K})) with $\mathbb{H}$ finite dimensional. Since we can identify $\trivial{\rtw{H}{\sigma}}$ with its image $\mathcal{K} \subset U \times (\HA{B})$ it suffices to show that $\mathcal{K} \to U$ is locally trivial. Let $n = \dim{\mathbb{H}}$ and note that $p_0 \in M_n(B) = \cptEnd{}{B}{\HA{B}}$, therefore
\[
	\Lambda \colon U \to \bddEnd{}{B}{\HA{B}} \quad ; \quad x \mapsto p_x = {\rm Ad}_{\sigma(x)^{-1}}(p_0)\ .
\]
is not just strictly continuous, but also norm-continuous. Thus, there is a small neighborhood $V \subset U$ of $y$, such that $\lVert p_x - p_y \rVert < 1$ for all $x \in V$. It follows that $p_x$ and $p_y$ are unitarily equivalent via $u_x \in M(M_n(B))$, which is constructed as follows: Define $v_x = 1 - p_x - p_y + 2p_xp_y \in M(M_n(B))$, which is norm continuous. Then we have $v_xv_x^* = v_x^*v_x = 1 - (p_x-p_y)^2$, which is invertible and also norm continuous as a function in $x$. Therefore $u_x = v_x\lvert v_x \rvert^{-1}$ is a unitary, which satisfies $p_xu_x = u_x\,p_y$. All operations applied to $v_x$ this way are norm continuous, therefore $x \mapsto u_x$ is a fortiori strictly continuous. The local trivialization of $\mathcal{K}$ over $V$ is now given by 
\[
	\widetilde{\kappa}_V \colon V \times \mathcal{K}_y \to \mathcal{K} \quad ; \quad (x,v) \mapsto (x,u_xv)
\]
with inverse induced by $u_x^*$.
\end{proof}

The previous lemma worked because of the following rigidity result, which we state as a separate corollary, since it may be of interest in itself.
\begin{corollary}\label{cor:projSimilar}
Let $A$ be a $C^*$-algebra. Suppose $U$ is a path-connected space and $\sigma \colon U \to \Aut{A}$ is a continuous map. Let $t \in M_n(A)$ be a projection. Then for any two points $x,y \in U$ the projections $\sigma(x)(t)$ and $\sigma(y)(t)$ are unitarily equivalent.
\end{corollary}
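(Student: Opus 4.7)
The plan is to globalize the local argument from the proof of lemma~\ref{lem:loctriv} via a path-covering argument, using two facts established there. First, by the density argument in the proof of lemma~\ref{lem:emb} (applied with finite dimensional $\mathbb{H}$), the pointwise norm continuity of $\sigma \colon U \to \Aut{A}$ implies that $\id{M_n} \otimes \sigma \colon U \to \Aut{M_n(A)}$ is pointwise norm continuous, so evaluating on the fixed element $t$ gives a norm continuous map $f \colon U \to M_n(A)$, $f(x) = \sigma(x)(t)$. Second, recall the explicit rigidity construction used in the proof of lemma~\ref{lem:loctriv}: whenever two projections $p, q$ in (the multiplier algebra of) a $C^*$-algebra satisfy $\lVert p - q \rVert < 1$, the element $v = 1 - p - q + 2pq$ has invertible absolute value and $u = v \lvert v \rvert^{-1}$ is a unitary with $u p u^* = q$.

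With these two ingredients in hand, I would fix a continuous path $\gamma \colon [0,1] \to U$ with $\gamma(0) = x$ and $\gamma(1) = y$, which exists by path-connectedness of $U$. The composition $f \circ \gamma$ is a norm continuous map from the compact interval $[0,1]$ into $M_n(A)$, hence uniformly continuous, so I can choose a partition $0 = s_0 < s_1 < \cdots < s_N = 1$ with $\lVert f(\gamma(s_i)) - f(\gamma(s_{i+1})) \rVert < 1$ for every $i$.

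Setting $p_i = \sigma(\gamma(s_i))(t)$, the rigidity construction provides unitaries $u_i \in M(M_n(A))$ with $u_i\, p_i\, u_i^* = p_{i+1}$ for $i = 0, \ldots, N-1$. Composing them gives $u = u_{N-1} u_{N-2} \cdots u_0$ with $u\, \sigma(x)(t)\, u^* = \sigma(y)(t)$, which is the unitary equivalence required.

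There is no genuine obstacle here; the only thing to be careful about is that when $A$ is non-unital the unitaries naturally live in the multiplier algebra $M(M_n(A))$ rather than $M_n(A)$ itself, in keeping with the convention of lemma~\ref{lem:loctriv}. Path-connectedness is used solely to produce the path $\gamma$, and all the substantive work has already been done in establishing norm continuity of $x \mapsto \sigma(x)(t)$ and the explicit unitarization formula for nearby projections.
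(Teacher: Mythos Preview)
Your proof is correct and follows essentially the same approach as the paper: both reduce to a path in $U$ and invoke the local argument from lemma~\ref{lem:loctriv} that nearby projections are unitarily equivalent. The only cosmetic difference is that the paper globalizes via a clopen argument on $[0,1]$ (the set of parameters where $\sigma(z)(t) \sim \sigma(x)(t)$ is open and closed), whereas you use uniform continuity and a finite partition; these are equivalent standard devices.
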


\begin{proof}\label{pf:projSimilar}
Choose a path in $U$ connecting $x$ and $y$ parametrized by $I = [0,1]$. It suffices to see that 
\[
	J = \left\{ z \in I\ \mid \ \sigma(z)(t) \sim \sigma(x)(t) \right\}
\] 
is all of $I$, where $\sim$ denotes unitary equivalence. But arguing just like in lemma \ref{lem:loctriv} we see that the set $J$ is open and closed in $I$.
\end{proof}

\begin{corollary}\label{cor:Normal}
Let $A$ be a unital $C^*$-algebra and suppose $\Aut{A}$ is path-connected in the point-norm topology. Then the image of $\Aut{A}$ in $\Pic{A}$ is a normal subgroup. 
\end{corollary}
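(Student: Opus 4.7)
My plan is to reduce the normality statement to constructing, for every $A$-$A$-Morita equivalence $H$ and every $\sigma \in \Aut{A}$, an automorphism $\tau \in \Aut{A}$ together with a bimodule isomorphism $H \simeq \twA{\tau} \otimes_A H \otimes_A \twA{\sigma^{-1}}$. Rearranging in $\Pic{A}$ this reads $[H][\twA{\sigma}][H]^{-1} = [\twA{\tau}]$, which is exactly what is needed for the image of $\Aut{A}$ to be closed under conjugation by an arbitrary $[H]$.

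To build $\tau$ and the intertwiner, I would first appeal to Remark \ref{rem:fingenproj} and Theorem \ref{thm:AfinRank} to present $H$ concretely as $pA^n$ for some projection $p \in M_n(A)$, with the left action encoded in a $*$-isomorphism $\rho\colon A \to pM_n(A)p$. Extending $\sigma$ entry-wise to $\sigma_n \in \Aut{M_n(A)}$ and feeding a path in $\Aut{A}$ from $\id{A}$ to $\sigma$ into Corollary \ref{cor:projSimilar} produces a unitary $u \in M_n(A)$ satisfying $u p u^* = \sigma_n(p)$. I would then \emph{define} $\tau$ by $\rho(\tau(a)) = u^* \sigma_n(\rho(a)) u$; the projection identity $u^* \sigma_n(p) = p u^*$ forces the right-hand side to lie in $pM_n(A)p = \rho(A)$, so $\tau$ is a well-defined $*$-automorphism, being a composition of $*$-isomorphisms.

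The candidate bimodule map is $\phi(v) = u^* \sigma_n(v)$. Short computations, each using only $u^* \sigma_n(p) = p u^*$ and the defining formula for $\tau$, show that $\phi$ preserves $pA^n$, satisfies $\phi(v a) = \phi(v)\sigma(a)$ (so it is right-linear into $\rtw{H}{\sigma}$), and $\phi(\rho(a) v) = \rho(\tau(a)) \phi(v)$ (so the left action is twisted by $\tau$). Using the identifications $\ltw{X}{\tau} \simeq \twA{\tau} \otimes_A X$ and $\rtw{X}{\sigma} \simeq X \otimes_A \twA{\sigma^{-1}}$ implicit in Example \ref{ex:autTwist}, $\phi$ delivers the required bimodule isomorphism $H \simeq \twA{\tau} \otimes_A H \otimes_A \twA{\sigma^{-1}}$.

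The main obstacle is the existence of the unitary $u$: without the path-connectedness hypothesis on $\Aut{A}$ there is no reason for $p$ and $\sigma_n(p)$ to be Murray--von~Neumann equivalent in $M_n(A)$, and the conjugate bimodule could then be a genuinely ``exotic'' Morita self-equivalence outside the image of $\Aut{A}$. Corollary \ref{cor:projSimilar} is precisely the tool that removes this obstacle, after which both $\tau$ and $\phi$ fall out through formal manipulations with the single identity $u p u^* = \sigma_n(p)$.
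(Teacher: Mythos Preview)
Your approach is essentially identical to the paper's: present $H$ as $pA^n$ with left action through an isomorphism $\rho\colon A\to pM_n(A)p$, invoke Corollary~\ref{cor:projSimilar} together with path-connectedness to obtain a unitary conjugating $p$ to $\sigma_n(p)$, define $\tau$ through this unitary, and write down an explicit intertwiner (the paper's map is $v\mapsto u\,\sigma^{-1}(v)$ viewed as $\rtw{H}{\sigma}\to\ltw{H}{\tau}$, which is your $\phi$ with the roles of source and target swapped). The only point you should add is the one-line verification that $\phi$ preserves the $A$-valued inner products, since classes in $\Pic{A}$ are represented by Hilbert bimodules and not merely bimodules; the paper checks this explicitly.
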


\begin{proof}\label{pf:Normal}
Let $H$ be an $A$-$A$-Morita equivalence and let $\sigma \in \Aut{A}$. We need to show that $H \otimes_A \rtw{A}{\sigma} \otimes_A H^* \simeq \rtw{H}{\sigma} \otimes_A H^*$ is isomorphic as an $A$-$A$-bimodule to $\rtw{A}{\tau}$ for some automorphism $\tau \in \Aut{A}$. Since $H$ is finitely generated and projective as a right Hilbert $A$-module, there is a projection $t \in M_n(A)$ and an isomorphism $H \simeq tA^n$ as right Hilbert $A$-modules. The left multiplication induces a $C^*$-algebra isomorphism $\psi \colon A \to t\,M_n(A)\,t$, such that we can identify $H$ as a bimodule with $tA^n$, where the left multiplication is given by $\psi$. The left inner product then corresponds to 
\[
	\scalL{A}{v}{w} = \psi^{-1}(v\,\scalR{w}{\,\cdot\,}{A})
\]
where the brackets on the right hand side denote the right inner product of $tA^n$ and $v\,\scalR{w}{\,\cdot\,}{A} \in t\,M_n(A)\,t$ is the rank $1$-operator induced by $v$ and $w$. Since we assumed $\Aut{A}$ to be path connected, we get a unitary $u \in M_n(A)$ with ${\rm Ad}_u \circ \sigma^{-1}(t) = t$ by co\-rol\-la\-ry~\ref{cor:projSimilar}. Note that 
\[
\tau \colon A \to A \quad ; \quad a \mapsto (\psi^{-1} \circ {\rm Ad}_u \circ \sigma^{-1} \circ \psi) (a)
\]
defines an automorphism of $A$. Observe that
\[
\Theta \colon \rtw{H}{\sigma} \to \ltw{H}{\tau}\quad ; \quad v \mapsto u\,\sigma^{-1}(v)\ .
\]
is well-defined as an $A$-$A$-bimodule morphism since 
\[
\Theta(a \cdot v) = u\,\sigma^{-1}(a \cdot v) = u\,\sigma^{-1}(\psi(a)v) = ({\rm Ad}_u \circ \sigma^{-1} \circ \psi)\,\Theta(v) = \tau(a) \cdot \Theta(v)\ .
\]
It also preserves the inner products, since
\begin{align*}
\scalR{u\,\sigma^{-1}(v)}{u\,\sigma^{-1}(w)}{A}^{\tau} & = \sigma^{-1}\left(\scalR{v}{w}{A}\right) = \scalR{v}{w}{A}^{\sigma} \\
\scalL{A}{u\,\sigma^{-1}(v)}{u\,\sigma^{-1}(w)}^{\tau} & = (\tau^{-1}\circ\psi^{-1}) \left( u\,\sigma^{-1}(v)\,\scalR{u\,\sigma^{-1}(w)}{\,\cdot\,}{A}\right) \\
& = (\tau^{-1}\circ\psi^{-1}) \left( {\rm Ad}_u \circ \sigma^{-1} \left(v\,\scalR{w}{\,\cdot\,}{A}\right)\right) \\
& = \psi^{-1}\left(v\,\scalR{w}{\,\cdot\,}{A}\right) = \scalL{A}{v}{w}^{\sigma}
\end{align*}
Therefore it yields an isomorphism of Morita equivalences and we have 
\[
\rtw{H}{\sigma} \otimes_A H^* \simeq \ltw{H}{\tau} \otimes_A H^* \simeq \ltw{A}{\tau} \simeq \rtw{A}{\tau^{-1}} 
\]
proving the claim.
\end{proof}

\begin{remark}\label{rem:Bfin}
To be of $B$-finite rank is a sufficient condition for $\trivial{\rtw{H}{\sigma}}$ to be locally trivial as we have seen above. But, it is not necessary: For example, if $H = \HA{B}$, then $\trivial{\rtw{H}{\sigma}}$ is trivial as a bundle of right Hilbert $B$-modules via $(x,v \otimes b) \mapsto (x, v \otimes \sigma(x)^{-1}(b))$.	This holds in particular for $\mathbb{H} = \C$.

However, remark \ref{rem:carefully} suggests that in case $B$ is not of continuous trace there may be a map $\sigma \colon U \to \Inn{B}$ such that there is a point $x \in U$, for which no local lift of $\sigma$ to $U(B)$ exists. If $p \in U(M_n(B))$ is a projection on which $\sigma(x)$ acts non-trivially, then for $H = B^n$ the bundle $\trivial{\rtw{H}{\sigma}}$ is \emph{not} locally trivial.
\end{remark}

Let $B$ and $C$ be $C^*$-algebras and let $H$ be a right Hilbert $B$-module and $K$ be a right Hilbert $C$-module. If we have a $*$-homomorphism $\varphi \colon B \to \bddEnd{}{C}{K}$, we can form the interior tensor product with respect to $\varphi$ similar to definition \ref{def:tensorproduct}, i.e.\ $H \otimes_{\varphi} K$ is the algebraic tensor product modulo the relation $hb \otimes k = h \otimes \varphi(b)k$, completed with respect to the norm induced by the inner product
\[
	\scalR{h_1 \otimes k_1}{h_2 \otimes k_2}{B} = \scalR{k_1}{\varphi(\scalR{h_1}{h_2}{A})k_2}{B}\ .
\]
If $\Hb \to U$ is a locally trivial bundle of right Hilbert $B$-modules with trivial fiber $H$ and structure group $\U{}{B}{H}$, $K$ is a right Hilbert $C$-module as above and $\psi \colon B \to \cptEnd{}{C}{K}$ is a $*$-homomorphism, we can form the tensor product $\Hb \otimes_{\psi} K$, which as a set is just 
\begin{equation} \label{eqn:tensorK}
	\Hb \otimes_{\psi} K = \coprod_{x \in X} \{x\} \times \left(\Hb_x \otimes_{\psi} K\right)\ .
\end{equation}
The topology we impose on this space is chosen in such a way that local trivializations are homeomorphisms. This is achieved as follows: Cover $U$ by open sets $U_i$ over which $\Hb$ is trivializable and choose trivializations $\kappa_i \colon \left.\Hb\right|_{U_i} \to U_i \times H$. Then $\kappa_i \otimes \id{K} \colon \left.\Hb \otimes_{\psi} K\right|_{U_i} \to U_i \times (H \otimes_{\psi} K)$ trivializes the above bundle over $U_i$, such that we can pull back the topology of $U_i \times H \otimes_{\psi} K$ to the subbundle $\left.\Hb \otimes_{\psi} K\right|_{U_i}$. Let $c_{ij} = \kappa_j \circ \kappa_i^{-1}$ be the $\U{}{B}{H}$-cocycle associated to $\Hb$, which we interpret as a map $U_{ij} \to \U{}{B}{H}$. To see that the local topologies patch together to form a topology on $\Hb \otimes_{\psi} K$ we only have to check that the induced cocycle $c_{ij} \otimes \id{K} \colon U_{ij} \to \U{}{C}{H \otimes_{\psi} K}$ is continuous with respect to the strict topology, which can be seen by an argument similar to the one used in the proof of the continuity of $\id{\K} \otimes \sigma$ in lemma \ref{lem:emb}. 

If $H$ is of $B$-finite rank and $\sigma \in \Aut{B}$, then the bundle $\trivial{\rtw{H}{\sigma}}$ is locally trivial (see lemma \ref{lem:loctriv}). Therefore we can define the bundle $\trivial{\rtw{H}{\sigma}} \otimes_{\psi} K$. Suppose $\psi \colon U \times B \to \cptEnd{}{C}{K}$ is a continuous map, which is an isomorphism of $C^*$-algebras at every $x \in U$. Let $\trivial{\rtw{H}{\psi^{-1}}}$ be the right Hilbert $\cptEnd{}{C}{K}$-module, where the right module structure at a point $x \in U$ is induced by $\psi^{-1}(x)$. Fix a point $x_0 \in U$ and let $\psi_0 \colon U \times B \to \cptEnd{}{C}{K}$ map every $x \in U$ to the isomorphism at $x_0$. Then we have 
\[
	\trivial{\rtw{H}{\psi^{-1}}} = \left(\trivial{\rtw{H}{\psi^{-1}\circ \psi_0}}\right)_{\psi_0^{-1}} \ , 
\]
where $\psi^{-1}\circ \psi_0 \colon U \to \Aut{B}$, therefore $\trivial{\rtw{H}{\psi^{-1}}}$ is locally trivial, which allows us to define 
\begin{equation}\label{eqn:semitrivial}
	\trivial{H \otimes_{\psi} K} := \trivial{\rtw{H}{\psi^{-1}}} \otimes_{\cptEnd{}{C}{K}} K\ ,
\end{equation}
with respect to the identity map on $\cptEnd{}{C}{K}$. The notation of this locally trivial Hilbert $C$-module bundle stems from the fact that the fiber at $x \in U$ is equal to $H \otimes_{\psi(x)} K$ as a right Hilbert $C$-module.

In a similar fashion as we did above, we can impose a topology on the bundle $\cptEnd{}{B}{\Hb}$, whose fiber over $x \in U$ consists of the compact adjointable $B$-linear endomorphisms of $\Hb_x$. Note that this bundle is trivial in the case of $\trivial{\rtw{H}{\sigma}}$, i.e.\ $\cptEnd{}{B}{\trivial{\rtw{H}{\sigma}}} = \trivial{\cptEnd{}{B}{H}}$. Since we assumed $\psi$ to be an isomorphism at each point we also get 
\begin{equation} \label{eqn:cpts}
	\cptEnd{}{C}{\trivial{H \otimes_{\psi} K}} \simeq \cptEnd{}{B}{\trivial{\rtw{H}{\psi^{-1}}}} = \trivial{\cptEnd{}{B}{H}}\ ,
\end{equation}
where the first isomorphism is the inverse of $T \mapsto T \otimes_{\psi} \id{K}$.

\begin{definition}\label{def:rank1bibundle}
Let $X$ be a topological space. A continuous, locally trivial fiber bundle $\pi \colon \Hb \to X$ is called an ($A$-$B$-)\emph{bimodule bundle} if each fiber $\Hb_x = \pi^{-1}(x)$ carries the structure of a Hilbert $A$-$B$-bimodule isomorphic to some typical fiber $H$ and the structure group reduces to $\biU{A}{B}{H}$. $\Hb$ will be called \emph{Morita bundle} if all fibers are Morita equivalences.
\end{definition}

Since we want the bimodule structure to vary over $X$, which yields a bundle with fibers that are \emph{not} isomorphic as bimodules, we need a weaker notion of Morita bundle. For the rest of this chapter we will assume $A$ and $B$ to be \emph{unital} $C^*$-algebras.

\begin{definition}\label{def:twistedMorita}
Let $X$ be a connected topological space. A con\-ti\-nuous, locally trivial fiber bundle $\pi \colon \Hb \to X$ is called a \emph{twisted $A$-$B$-Morita bundle}, if it is a left Hilbert $A$-module bundle with full typical fiber $H$ and structure group $\U{A}{}{H}$, together with an isomorphism
\begin{equation} \label{eqn:triv}
	X \times B^{\rm op} \to \cptEnd{A}{}{\Hb}
\end{equation}
of algebra bundles, which turns each fiber into an $A$-$B$-Morita equivalence. An isomorphism of twisted $A$-$B$-Morita bundles is an isometric isomorphism $\Psi \colon \Hb \to \Hb'$ of left Hilbert $A$-module bundles intertwining the corresponding algebra bundle maps:
\[
  \begin{xy}
   \xymatrix{
	X \times B^{\rm op} \ar[r] \ar@{=}[d] & \cptEnd{A}{}{\Hb} \ar[d]^{{\rm Ad}_{\Psi}} \\
	X \times B^{\rm op} \ar[r] & \cptEnd{A}{}{\Hb'}
   }
  \end{xy}
\]
If $X$ is not connected, a twisted $A$-$B$-Morita bundle $\Hb \to X$ is a twisted Morita bundle over each of its connected components, where the typical fiber may change from one component to the other.
\end{definition}

\begin{lemma}\label{lem:symmetricDef}
Let $\Hb \to X$ be a twisted $A$-$B$-Morita bundle, then it is also a right Hilbert $B$-module bundle in a canonical way together with an isomorphism
\[
	X \times A \to \cptEnd{}{B}{\Hb}
\]
of bundles of algebras. 
\end{lemma}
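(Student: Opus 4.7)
The right Hilbert $B$-module structure on each fiber is forced by the data: for $v \in \Hb_x$ and $b \in B$ I set $v \cdot b := \varphi(x,b)(v)$, where $\varphi \colon X \times B^{\rm op} \to \cptEnd{A}{}{\Hb}$ is the given algebra bundle isomorphism, and take the $B$-valued inner product to be the one characterized by the compatibility (\ref{eqn:switchscal}) with the already-given $A$-valued inner product. This fibrewise makes $\Hb_x$ the $A$-$B$-Morita equivalence that the definition asserts it to be, so the substantive tasks are to prove local triviality of $\Hb$ as a right Hilbert $B$-module bundle and to promote the fibrewise left multiplication $a \mapsto L_a^x$ into a continuous algebra bundle isomorphism $X \times A \to \cptEnd{}{B}{\Hb}$.

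Fix a connected component, let $H$ be the typical fiber and choose a trivialization $\kappa_i \colon \Hb|_{U_i} \to U_i \times H$ of left Hilbert $A$-module bundles. Via conjugation by $\kappa_i$, the algebra bundle $\cptEnd{A}{}{\Hb}|_{U_i}$ trivializes as $U_i \times \cptEnd{A}{}{H}$, and transporting $\varphi$ through this identification produces a continuous family of $C^*$-algebra isomorphisms $\psi_i(x) \colon B^{\rm op} \to \cptEnd{A}{}{H}$. Pick $x_0 \in U_i$ and set $\psi_0 := \psi_i(x_0)$; then $\sigma_i := \psi_0^{-1} \circ \psi_i$ is a continuous map $U_i \to \Aut{B^{\rm op}} = \Aut{B}$ in the pointwise-norm topology, and under $\kappa_i$ the intrinsic right $B$-action on $\Hb_x$ identifies with the twisted right action on $\rtw{H}{\sigma_i(x)}$ of example \ref{ex:autTwist}. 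Hence local triviality of $\Hb$ as a right Hilbert $B$-module bundle reduces to local triviality of $\trivial{\rtw{H}{\sigma_i}} \to U_i$.

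This is precisely what lemma \ref{lem:loctriv} provides, provided $H$ is of $B$-finite rank. Since $A$ is assumed unital, remark \ref{rem:fingenproj} guarantees that $H$, being an $A$-$B$-Morita equivalence over a unital $A$, is finitely generated and projective as a right Hilbert $B$-module; so lemma \ref{lem:loctriv} supplies, around each $y \in U_i$, a right Hilbert $B$-module trivialization $V \times \rtw{H}{\sigma_i(y)} \to \trivial{\rtw{H}{\sigma_i}}|_V$, which composed with $\kappa_i^{-1}$ yields the required local trivialization of $\Hb$.

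For the algebra bundle isomorphism, the key observation is that twisting the right $B$-action by an automorphism does not alter the algebra of right $B$-linear compact operators, i.e.\ $\cptEnd{}{B}{\rtw{H}{\sigma}} = \cptEnd{}{B}{H}$ canonically as $C^*$-algebras (the rank-one operator $v \cdot \scalB{w}{\cdot}^\sigma$ equals $v \cdot \scalB{w}{\cdot}$, and right $B$-linearity is preserved under replacing $b$ by $\sigma(b)$). Consequently $\kappa_i$ induces an algebra bundle trivialization $U_i \times \cptEnd{}{B}{H} \to \cptEnd{}{B}{\Hb}|_{U_i}$ under which the fibrewise left action $a \mapsto L_a^x$ becomes the constant family $a \mapsto L_a$ into $\cptEnd{}{B}{H}$; this is a $C^*$-algebra isomorphism at each point by the Morita equivalence property and is manifestly continuous, producing the desired algebra bundle isomorphism. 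The principal obstacle is the local triviality step: the structure group $\U{A}{}{H}$ of the left bundle does not in general preserve the right $B$-action, so one must pass through twisted bundles $\rtw{H}{\sigma_i}$ and invoke $B$-finite rank as the necessary rigidity, which is exactly why lemmas \ref{lem:emb} and \ref{lem:loctriv} were developed.
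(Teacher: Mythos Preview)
Your proof is correct and follows essentially the same route as the paper: you define the fibrewise right $B$-module structure via the given algebra bundle isomorphism, transport the $B$-action through a left $A$-module trivialization $\kappa_i$ to identify $\Hb|_{U_i}$ with $\trivial{\rtw{H}{\sigma_i}}$, and then invoke lemma~\ref{lem:loctriv} together with remark~\ref{rem:fingenproj} (unitality of $A$) to obtain local triviality as a right Hilbert $B$-module bundle. Your treatment of the algebra bundle isomorphism $X \times A \to \cptEnd{}{B}{\Hb}$ via the observation $\cptEnd{}{B}{\rtw{H}{\sigma}} = \cptEnd{}{B}{H}$ is a minor rephrasing of the paper's appeal to $\cptEnd{}{C}{H} = I_A = A$ for $C = \cptEnd{A}{}{H}$, but the content is identical.
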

\begin{proof}\label{pf:symmetricDef}
Since every left Hilbert $A$-module $H$ is in a canonical way a Hilbert $A$-$\cptEnd{A}{}{H}$-bimodule and all fibers of $\cptEnd{A}{}{\Hb}$ are identified with $B$ via (\ref{eqn:triv}), the fibers of $\Hb$ are indeed also full right Hilbert $B$-modules in a canonical way. In particular, the typical fiber $H$ is a right Hilbert $B$-module. For if we fix a point $x_0 \in M$ and an isomorphism $\Hb_{x_0} \simeq H$, $H$~inherits the Hilbert $B$-module structure of the fiber. Now we \emph{fix} a right Hilbert $B$-module structure on $H$ together with an isomorphism $\cptEnd{A}{}{H} \simeq B$. We need to prove that $\Hb$ is locally trivial as a right Hilbert $B$-module bundle. So, fix a point $y \in M$ and a neighborhood $U \ni y$ allowing a trivialization $\kappa_U \colon U \times H \to \left.\Hb\right|_U$ as a left Hilbert $A$-module bundle. This induces an isomorphism of the trivial algebra bundle over $U$ via 
\[
 B \times U \to \cptEnd{A}{}{\left.\Hb\right|_U} \to \cptEnd{A}{}{H} \times U \simeq B \times U \ ,
\]
which can be seen as a map $\sigma \colon U \to \Aut{B}$, such that 
\[
	\kappa_U \colon U \times H_{\sigma} \to \left.\Hb\right|_U
\]
is fiberwise an isomorphism of bimodules in the notation of lemma \ref{lem:emb}. By lemma \ref{lem:loctriv} (which applies due to remark \ref{rem:fingenproj} since $A$ and $B$ are unital) there exists a neighborhood $V$ of $y$, such that $\left.\Hb\right|_U$ is trivial over $V$ as a right Hilbert $B$-module bundle. If $H$ is a full left Hilbert $A$-module and $C = \cptEnd{A}{}{H}$, then $\cptEnd{}{C}{H} = I_A = A$. Applying this to the fibers yields the trivialization $X \times A \to \cptEnd{}{B}{\Hb}$, i.e.\ it maps $(x,a)$ to the endomorphism $v \mapsto a\cdot v$, where $v$ lives in the fiber $\Hb_x$.
\end{proof}

\begin{corollary}\label{cor:twistedconjugate}
If $\Hb \to X$ is a twisted $A$-$B$-Morita bundle, then the space $\Hb^*$, which we get by taking the conjugate fiberwise, is a twisted $B$-$A$-Morita bundle. 	
\end{corollary}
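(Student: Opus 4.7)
The plan is to bootstrap from Lemma \ref{lem:symmetricDef}, which equips $\Hb$ with a canonical right Hilbert $B$-module bundle structure together with an algebra bundle isomorphism $\varphi \colon X \times A \to \cptEnd{}{B}{\Hb}$ given fiberwise by left multiplication. Fiberwise conjugation in the sense of Definition \ref{def:conjugatebimodule} turns the right $B$-action on $\Hb$ into a left $B$-action on $\Hb^*$ and the left $A$-action into a right $A$-action, and each fiber $\Hb_x^*$ is automatically a $B$-$A$-Morita equivalence. What remains is to topologize $\Hb^* = \coprod_{x \in X}\{x\} \times \Hb_x^*$ and to produce the two pieces of structure data required by Definition \ref{def:twistedMorita} for a twisted $B$-$A$-Morita bundle.

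To topologize $\Hb^*$ I would cover $X$ by open sets $U_i$ over which $\Hb$ trivializes as a right Hilbert $B$-module bundle by $\kappa_i \colon U_i \times H \to \left.\Hb\right|_{U_i}$, and declare the fiberwise conjugates $\kappa_i^c \colon U_i \times H^* \to \left.\Hb^*\right|_{U_i}$, $(x, v^*) \mapsto \kappa_i(x, v)^*$, to be homeomorphisms. The transition cocycle becomes $c_{ij}^c(x)(v^*) = c_{ij}(x)(v)^*$, where $c_{ij}$ is the cocycle of $\Hb$ with values in $\U{}{B}{H}$. A short check confirms that $u \mapsto u^c$ defines a group homomorphism $\U{}{B}{H} \to \U{B}{}{H^*}$ which is continuous in the strict topologies: these are generated by seminorms of the form $p_v(u) = \norm{u(v)}$ and $p_v^*(u) = \norm{u^*(v)}$, and the interchange of inner products under conjugation carries seminorms on $\U{}{B}{H}$ exactly to the corresponding ones on $\U{B}{}{H^*}$. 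Hence the $c_{ij}^c$ are continuous and the local charts glue to a locally trivial left Hilbert $B$-module bundle structure on $\Hb^*$, with full typical fiber $H^*$ since $H$ is full as a right Hilbert $B$-module.

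The required algebra bundle isomorphism is the fiberwise conjugate $\varphi^c \colon X \times A^{\rm op} \to \cptEnd{B}{}{\Hb^*}$ of $\varphi$, sending $(x, a)$ to the operator $T_a \colon v^* \mapsto (a^* v)^*$. This $T_a$ is exactly the right $A$-action on $\Hb_x^*$ read as a compact left $B$-linear operator, and the opposite algebra appears because a right $A$-action is the same as a left $A^{\rm op}$-action; a short calculation gives $T_a T_{a'} = T_{a'a}$. Fiberwise $\varphi^c$ is an isomorphism since $\varphi$ is, the Morita equivalence condition on $\Hb_x^*$ is inherited from $\Hb_x$, and continuity of $\varphi^c$ follows from that of $\varphi$ since fiberwise conjugation induces a homeomorphism of the corresponding endomorphism bundles.

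The only genuinely technical step is the transfer of strict-topology continuity from $c_{ij}$ and $\varphi$ to their conjugates; essentially the same argument as in the proof of Lemma \ref{lem:emb} works, since the seminorms generating the strict topology are governed by norms of vectors, which are preserved by $v \mapsto v^*$. The remaining verifications are formal consequences of Lemma \ref{lem:symmetricDef} and Definition \ref{def:conjugatebimodule}.
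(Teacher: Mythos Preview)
Your proof is correct and follows essentially the same approach as the paper: both invoke Lemma \ref{lem:symmetricDef} to view $\Hb$ as a right Hilbert $B$-module bundle with trivialization $\Psi \colon X \times A \to \cptEnd{}{B}{\Hb}$, then take the fiberwise conjugate and define the right $A$-action via $(x,a) \mapsto \Psi(x,a)^*$ (which is exactly your $T_a$ under the identification $T \mapsto T^c$). You are simply more explicit than the paper about topologizing $\Hb^*$ via conjugated transition cocycles and verifying their strict continuity, details the paper leaves implicit.
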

\begin{proof}\label{pf:twistedconjugate}
By lemma \ref{lem:symmetricDef} we can see $\Hb$ as a right Hilbert $B$-module bundle together with a map $\Psi$ trivializing $\cptEnd{}{B}{\Hb}$ via $A$. Taking the fiberwise conjugate therefore yields a left Hilbert $B$-module bundle. The right $A$-action is given by the map
\[
	X \times A^{\rm op} \to \cptEnd{B}{}{\Hb}\ ,
\]
which sends $(x,a)$ to $\Psi(x,a)^*$.
\end{proof}

\begin{example}\label{ex:twistedMnC}
Let $A = M_n(\C)$. Note that $\Aut{A}= \Inn{A} = PU(n)$ and that up to isometric isomorphism $A = M_n(\C)$ is the only $A$-$A$-Morita equivalence. Therefore a twisted $A$-$A$-Morita bundle $\Hb$ over $X$ should induce an honest $A$-$A$-Morita bundle, which we will now determine. By definition $\Hb$ is a principal $U(n)$-bundle $P \to X$ such that 
\[
	\Hb = P \times_{U(n)} M_n(\C)
\]
where $U(n)$ acts via the canonical \emph{right} action. The associated automorphism bundle $P \times_{\rm Ad} PU(n)$ is trivializable. Thus, from the exact sequence of pointed sets
\[
	H^1(M, U(1)) \to H^1(M, U(n)) \to H^1(M, PU(n)) 
\]  
we see that $P$ actually reduces to a principal $S^1$-bundle, which we continue to denote by $P$. Therefore $\Hb$ has the form
\[
	\Hb = P \times_{U(1)} M_n(\C) = L_1 \otimes M_n(C) \ ,
\]
where $L_1$ is the line bundle associated to $P$, together with a bundle isomorphism
\[
	X \times M_n(\C) \to X \times M_n(\C)\ ,
\]
which we can see as a map $\sigma \colon X \to PU(n)$. Pulling back the principal $U(1)$-bundle $U(n) \to PU(n)$ via $\sigma$ we get another associated line bundle $L_2 \to X$. A simple calculation shows that 
\[
X \times M_n(\C)_{\sigma}\ \simeq\ L_2^* \otimes M_n(\C) 
\] 
as a bundle of Hilbert $A$-$A$-bimodules (where we use the notation of lemma \ref{lem:emb}). Let $L = L_1 \otimes L_2^*$, then we get an isomorphism $\Hb \simeq L \otimes M_n(\C)$ as a bimodule bundle.
\end{example}

\subsection{Twisted tensor products} 
\label{ssub:subsubsection_name}
Now, suppose that $\Hb$ is a twisted $A$-$B$-Morita bundle over $X$ with typical fiber $H$, likewise let $\Nb$ be a twisted $B$-$C$-Morita bundle over the same space with fiber $N$. $\Hb$ is a right Hilbert $B$-module bundle and we have a trivialization $\psi \colon X \times B \to \cptEnd{}{C}{\Nb}$ if we view $\Nb$ as a right Hilbert $C$-module bundle. So we should be able to define a tensor product of twisted Morita bundles: As a set we simply take 
\[
	\Hb \otimes_{B} \Nb = \coprod_{x \in X} \left\{ x\right\} \times \Hb_x \otimes_{\psi(x)} \Nb_x\ .
\]
Let $\pi \colon \Hb \otimes_{B} \Nb \to X$ be the canonical projection. Again, we impose a topology on this space in such a way that local trivializations are homeomorphisms, but this time we use $\trivial{H \otimes_{\psi} N}$ as a model space (see (\ref{eqn:semitrivial})). Cover $X$ by open sets $U_i$, such that $\Hb$ and $\mathcal{K}$ are trivial over $U_i$ via trivializations $\kappa_i \colon \left.\Hb\right|_{U_i} \to U_i \times H$ and $\tau_i \colon \left.\Nb\right|_{U_i} \to U_i \times N$ respectively. Let 
\[
	\psi_i \colon U_i \times B \to \cptEnd{}{C}{N} \quad ; \quad \psi_i(x,b) =  \tau_i(x, \cdot ) \circ \psi(x,b) \circ \tau_i^{-1}(x, \cdot) \in \cptEnd{}{B}{N}
\]
and note that $\kappa_i \otimes \tau_i \colon \left.\Hb \otimes_B \Nb\right|_{U_i} \to \trivial{H \otimes_{\psi_i} N}$ is well defined on the algebraic tensor product, since for $h \in \Hb_x$ and $n \in \Nb_x$ we have
\begin{align*}
	(\kappa_i \otimes \tau_i)(h\,b \otimes n) & = \kappa_i(h)b \otimes \tau_i(n) = \kappa_i(h) \otimes \psi_i(x,b)\tau_i(n) \\
	& = \kappa_i(h) \otimes \tau_i(\psi(x,b)n) = (\kappa_i \otimes \tau_i)(h \otimes \psi(x,b)n)
\end{align*}
and a similar calculation shows that $\kappa_i \otimes \tau_i$ preserves the $C$-valued inner product. Therefore it extends to an isomorphism of right Hilbert $C$-module bundles over $U_i$. We equip $\left.\Hb \otimes_B \Nb\right|_{U_i}$ with the topology induced from $\trivial{H \otimes_{\psi_i} N}$. Let $c_{ij} \colon U_{ij} \to \U{}{B}{H}$ be the cocycle of $\Hb$ obtained from $\kappa_j \circ \kappa_i^{-1}$, likewise denote by $d_{ij} \colon U_{ij} \to \U{}{C}{N}$ the one of $\Nb$. To see that the partial topologies patch together to form a topology on $\Hb \otimes_B \Nb$ we need to see that the following section
\[
	c_{ij} \otimes d_{ij} \colon U_{ij} \to {\rm Iso}(\trivial{H \otimes_{\psi_i} N}, \trivial{H \otimes_{\psi_j} N})
\]
of the bundle of isometric isomorphisms between $\trivial{H \otimes_{\psi_i} N}$ and $\trivial{H \otimes_{\psi_j} N}$ is continuous with respect to the strict topology. Let $(x_k)_{k \in I}$ be a net in $U_{ij}$ converging to $x \in U_{ij}$ and define $\sigma_i \colon U_{i} \to \Aut{B}$ by $\sigma_i(y) = \psi_i(x)^{-1} \circ \psi_i(y)$, where we see $\psi_i$ as a map $U_i \to {\rm Iso}(B, \cptEnd{}{C}{N})$. Then we have 
\[
	\trivial{H \otimes_{\psi_i} N} = \trivial{\rtw{H}{\sigma_i^{-1}}} \otimes_{\psi_i(x)} N\ .
\]
The cocycle $c_{ij}$ can be viewed as a continuous bundle map $c_{ij} \colon \trivial{\rtw{H}{\sigma_i^{-1}}} \to \trivial{\rtw{H}{\sigma_j^{-1}}}$ intertwining $\sigma_i$ and $\sigma_j$, i.e.\ $c_{ij}(h \cdot \sigma_i(b)) = c_{ij}(h)\cdot \sigma_j(b)$ due to the $B$-linearity of $c_{ij}$ (the dot denotes the \emph{twisted} multiplication). Let $V \subset U_{ij}$ be an open neighborhood of $x$ such that $\trivial{\rtw{H}{\sigma_i^{-1}}}$ and $\trivial{\rtw{H}{\sigma_j^{-1}}}$ are trivial over $V$ via
\[
	\varphi_V^i \colon \trivial{\rtw{H}{\sigma_i^{-1}}} \to V \times H\ ,
\]
then we need to check that $(\varphi_V^j \otimes \id{N}) \circ (c_{ij} \otimes d_{ij}) \circ (\varphi_V^i \otimes \id{N})^{-1}(x_n) \in {\rm Iso}(H \otimes_{\psi_i(x)} N, H \otimes_{\psi_j(x)} N)$ converges strictly to its values at $x$ for $n > D$, where $D$ is chosen such that $x_n \in V$. Let $\mathcal{L}(H)$ be the bounded linear operators on the Banach space $H$ and note that $\widetilde{c}_{ij} = \varphi_V^j \circ c_{ij} \circ (\varphi_V^i)^{-1} \colon V \to \mathcal{L}(H)$ is strongly continuous. Thus, we have for $h \otimes k \in H \otimes_{\psi_i(x)} N$
\begin{align*}
  & \lVert \left(\widetilde{c}_{ij} \otimes d_{ij}\right)(x_n)(h \otimes k) - \left(\widetilde{c}_{ij} \otimes d_{ij}\right)(x)(h \otimes k) \rVert \\
\leq\ & \lVert \widetilde{c}_{ij}(x_n)(h) \otimes d_{ij}(x_n)(k) - \widetilde{c}_{ij}(x_n)(h) \otimes d_{ij}(x)(k) \rVert \\ 
+\ & \lVert \widetilde{c}_{ij}(x_n)(h) \otimes d_{ij}(x)(k) - \widetilde{c}_{ij}(x)(h) \otimes d_{ij}(x)(k) \rVert \\
\leq\ & \lVert h \rVert \cdot \lVert d_{ij}(x_n)(k) - d_{ij}(x)(k) \rVert + \lVert \widetilde{c}_{ij}(x_n)(h) - \widetilde{c}_{ij}(x)(h) \rVert \cdot \lVert k \rVert\ ,
\end{align*}
which tends to $0$ for $n$ large enough by the continuity of $\widetilde{c}_{ij}$ and $d_{ij}$. This shows the strict continuity on elementary tensors, so by the triangle inequality also on linear combinations and by the density of the algebraic tensor product for any vector. Since $\left(c_{ij} \otimes d_{ij}\right)^* = \left(c_{ij}^{-1} \otimes d_{ij}^{-1}\right) = c_{ji} \otimes d_{ji}$ we are done.
\begin{remark} \label{rem:twistedModule}
	Note that, if we do not assume $\Hb$ to be a twisted Morita bundle, but only a locally trivial bundle of finitely generated projective right Hilbert $B$-modules, we are still able to define $\Hb \otimes_B \Nb$, which will then be a locally trivial bundle of right Hilbert $C$-modules. Likewise we can form the partial stabilization $(\trivial{\HA{B}}) \otimes_B \Nb$ for a separable Hilbert space $\mathbb{H}$ of infinite dimension, even though $H = \HA{B}$ is not finitely generated as a right Hilbert $B$-module, since in this case $\trivial{\rtw{H}{\sigma}}$ is trivial (see remark \ref{rem:Bfin}). The result will be a twisted Morita $(\K \otimes B)$-$C$-bundle, in particular it is a locally trivial right Hilbert $C$-module bundle. Thus, we can form the full stabilization $(\trivial{\HA{B}}) \otimes_B \Nb \otimes_C (\trivial{\HA{C}})$, which is a twisted Morita $(\K \otimes B)$-$( \K \otimes C)$-bundle.
\end{remark}

\begin{definition}\label{def:MoritaBG}
A \emph{Morita bundle gerbe} for $A$ over a manifold $M$ is a fibration $\pi \colon Y \to M$ together with a twisted $A$-$A$-Morita bundle $\Hb \to Y^{[2]}$ and an isomorphism of twisted Morita bundles
\[
	\mu \colon \pi_{12}^* \Hb \otimes_A \pi_{23}^* \Hb \to \pi_{13}^* \Hb \ ,
\] 
which can be seen as a multiplication map and should satisfy the following associativity condition over $Y^{[3]}$:
\[
  \begin{xy}
   \xymatrix{
	\left(\pi_{12}^* \Hb \otimes_A \pi_{23}^*\Hb\right) \otimes_A \pi_{34}^*\Hb \ar@{=}[rr] \ar[d]^{\mu \otimes \id{}} & & \pi_{12}^* \Hb \otimes_A \left(\pi_{23}^*\Hb \otimes_A \pi_{34}^*\Hb\right) \ar[d]^{\id{} \otimes \mu} \\
	\pi_{13}^* \Hb \otimes_A \pi_{34}^*\Hb \ar[dr]^{\mu} & & \pi_{12}^* \Hb \otimes_A \pi_{24}^*\Hb \ar[dl]^{\mu}\\
	& \pi_{14}^* \Hb & 
   }
  \end{xy}
\]
\end{definition}

\begin{remark}\label{rem:localsection}
Since $Y$ plays the role of an auxiliary space, its most important property is that the surjective projection map allows local sections. Thus, if we want to work in the smooth category, we may change the definition from fibrations to surjective submersions. 
\end{remark}

\begin{example}
Let $\Qb \to Y$ be a twisted $A$-$A$-Morita bundle over $Y$, then the canonical pairing $\Qb^* \otimes_A \Qb \to \trivial{A}$ turns $\Hb = \pi_1^*\Qb \otimes_A \pi_2^*\Qb^*$ into a Morita bundle gerbe. In analogy to the situation for bundle gerbes this will be called the \emph{trivial Morita bundle} associated to $\Qb$.
\end{example}

\begin{example} \label{ex:groupaction}
Let $G$ be a topological group and $\varphi \colon G \to \Aut{A}$ be a homomorphism. Let $P \to M$ be a principal $G$-bundle. Since the action on the fibers of $P$ is free and transitive, there is a well-defined map $g \colon P^{[2]} \to G$ sending $(p_1,p_2)$ to the element $g(p_1,p_2) \in G$ with the property $p_1 \cdot g(p_1,p_2) = p_2$. Note that $g(p_1,p_2) \cdot g(p_2, p_3) = g(p_1,p_3)$ Therefore 
\[
	\Hb = P^{[2]} \times \Atw{\varphi \circ g}\ ,
\]
where the fiber over $(p_1,p_2)$ is the Hilbert $A$-$A$-bimodule $\Atw{\varphi(g(p_1,p_2))}$ is a Morita bundle via the multiplication map from example \ref{ex:autTwist}
\[
	\mu \colon \Atw{\varphi(g(p_1,p_2))} \otimes_A \Atw{\varphi(g(p_1,p_2))} \to \Atw{\varphi(g(p_1,p_2)g(p_2,p_3))} = \Atw{\varphi(g(p_1,p_3))} \ .
\]
\end{example}

\begin{example} \label{ex:groupoidaction}
As a higher algebraic extension of the last example, consider the following: Let $Y \to M$ be a fibration over a manifold $M$. The fiber product $Y^{[2]}$ defines a groupoid with object space $Y$ and the two projections as the source and range map. Suppose we have a weak action in the sense of \cite{paper:HigherCats} of $Y^{[2]}$ on $A$, i.e.\ continuous maps $\lambda \colon Y^{[2]} \to \Aut{A}$ and $g \colon Y^{[3]} \to U(A)$, such that 
\begin{align*}
	\lambda(y_1, y_2) \circ \lambda(y_2,y_3) & = {\rm Ad}_{g(y_1,y_2,y_3)} \circ \lambda(y_1,y_3)\ , \\
	\lambda(y_1,y_2)(g(y_2,y_3,y_4)) \cdot g(y_1,y_2,y_4) & = g(y_1,y_2,y_3) \cdot g(y_1,y_3,y_4)\ .
\end{align*}
Then $\Hb = Y^{[2]} \times A_{\lambda}$ is a Morita bundle gerbe, where the multiplication map $\mu$ sends $A_{\lambda(y_1,y_2)} \otimes_A A_{\lambda(y_2,y_3)}$ to $A_{\lambda(y_1,y_2) \circ \lambda(y_2,y_3)}$ via the canonical isomorphism and then uses right multiplication by $g(y_1,y_2,y_3)$ to map into $A_{\lambda(y_1,y_3)}$. The condition on $g$ ensures the associativity of $\mu$.

A special case of this arises from Busby-Smith twisted actions \cite{paper:BusbySmith}. If a pair $\alpha \colon G \to \Aut{A}$ and $u \colon G \times G \to U(A)$ yields such an action and $P$ is a principal $G$-bundle over $M$ with $\varphi \colon P^{[2]} \to G$ being the canonical map, then we can set $\lambda = \alpha \circ \varphi$ and $g = u \circ (\pi_{12}^*\varphi, \pi_{23}^*\varphi)$ to get a Morita bundle gerbe.
\end{example}

\begin{example}{\label{ex:matrixMbgs}}
Let $A = M_n(\C)$ be a matrix algebra. As we have seen in example \ref{ex:twistedMnC} every twisted Morita bundle $\Hb \to Y^{[2]}$ takes the form $L \otimes M_n(\C) \to Y^{[2]}$ for a line bundle $L$. Using the canonical Morita equivalence $\C^n$ between $M_n(\C)$ and $\C$ we can reduce $\Hb$ to the line bundle $L$ and the multiplication $\mu$ to a line bundle isomorphism, which turns $L$ into an $S^1$-bundle gerbe.
\end{example}

\begin{remark}\label{rem:fiberIsA}
If $\Hb \to Y^{[2]}$ is a Morita bundle gerbe and $(y_1, y_2) \in Y^{[2]}$, then $\mu$ induces an isomorphism on the fibers:
\[
	\Hb_{(y_1,y_1)} \otimes_A \Hb_{(y_1,y_2)} \simeq \Hb_{(y_1,y_2)} \quad \Rightarrow \quad \Hb_{(y_1,y_1)} \simeq \Hb_{(y_1,y_2)} \otimes_A \Hb_{(y_1,y_2)}^* \simeq A\ .
\]
So, the diagonal is isomorphic to the trivial Hilbert $A$-$A$-bimodule $A$, but this means that the typical fiber of $\Hb$ over the connected component of the diagonal in $Y^{[2]}$ is isomorphic to $A$ as a left (or equivalently right) Hilbert $A$-module, but of course not as a bimodule. Moreover, we get an isomorphism
\[
	\Hb_{(y_1,y_2)} \otimes_A \Hb_{(y_2,y_1)} \simeq \Hb_{(y_1,y_1)} \simeq A \quad \Rightarrow \quad \Hb_{(y_1,y_2)} \simeq \Hb_{(y_2,y_1)}^*\ .
\]
Let $\Delta \colon Y \to Y^{[2]}$ be the embedding onto the diagonal in the fiber product, then we set $\Delta \Hb = \Delta^*\Hb$. If $\Hbop$ is the pullback of $\Hb$ via the switch map $Y^{[2]} \to Y^{[2]}$ sending $(y_1,y_2)$ to $(y_2,y_1)$, then $\Hb^*\ \simeq\ \Hbop \otimes_A \pi_Y^*\Delta\Hb\ \simeq\ \Hbop$. Note that the isomorphism $\Delta\Hb \simeq \trivial{A}$ has to be \emph{chosen}. We have $\Hb \otimes \Delta\Hb \simeq \Hb$ via $\mu$ and $\Hb \otimes \trivial{A} \to \Hb$ via right multiplication. But it is a priori not clear that $\Delta\Hb \to \trivial{A}$ respects this structure. A Morita bundle gerbe $\Hb$ together with an isomorphism, which respects the product, will be called \emph{unital}. 
\end{remark}

\begin{definition}\label{def:stableequiv}
Let $\Hb \to Y^{[2]}$ and $\Hb' \to Y'^{[2]}$ be two Morita bundle gerbes with respect to $A$. Let $\pi_{Y^{[2]}} \colon (Y \times_M Y')^{[2]} \to Y^{[2]}$, $\pi_{Y'^{[2]}} \colon (Y \times_M Y')^{[2]} \to Y'^{[2]}$ be the canonical projections and let $\pi_i \colon (Y \times_M Y')^{[2]} \to Y \times_M Y'$ be the projection to the $i$th factor. If there exists a twisted $A$-$A$-Morita bundle $\Qb \to Y \times_M Y'$, such that 
\[
	\pi_{Y^{[2]}}^*\Hb \ \simeq \ \pi_1^*\Qb \otimes_A \pi_{Y'^{[2]}}^*\Hb' \otimes_A \pi_2^*\Qb^*\ ,
\]
then $\Hb$ and $\Hb'$ will be called \emph{stably equivalent}. As we will see, this is an equivalence relation, which is weaker than isomorphism of Morita bundle gerbes. The stable equivalence classes of twisted Morita bundles gerbes for $A$ over $M$ will be denoted by $\Tw{M}{A}$. This is a pointed set with the class of the trivial Morita bundle gerbe $M = M^{[2]} \times A \to M$ as distinguished element. 
\end{definition}

Of course, we need to show that the above definition indeed yields an equivalence relation. For the proof of transitivity and in many places later we will need the next lemma, which is essentially a folklore result. We include the easy proof for the convenience of the reader, since the lemma will play quite a fundamental role in the following.  

\begin{lemma}\label{lem:descentlemma}
Let $E \to Y$ be a locally trivial bundle with fiber $V$ over the total space of a fibration $\pi \colon Y \to M$. Assume that there is a bundle isomorphism
\[
\phi \colon \pi_2^* E \overset{\simeq}{\longrightarrow} \pi_1^*E \ ,
\]
where $\pi_i \colon Y^{[2]} \to Y$ denote the canonical projections. If the following associativity diagram over $Y^{[3]}$ commutes, 
\[
  \begin{xy}
   \xymatrix{
	\pi_3^* E \ar[rr]^{\pi_{23}^*\phi} \ar[drr]_{\pi_{13}^*\phi} & & \pi_2^* E \ar[d]^{\pi_{12}^*\phi} \\
	& & \pi_1^* E
   }
  \end{xy}
\]
then there is a bundle $\widetilde{E} \to M$ with fiber $V$ and an isomorphism $E \to \pi^*\widetilde{E}$. This construction is functorial in the sense that a bundle endomorphism that commutes with $\phi$ induces an endomorphism of $\widetilde{E}$.
\end{lemma}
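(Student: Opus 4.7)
The plan is to define $\widetilde{E}$ as a quotient of $E$ by the equivalence relation encoded in $\phi$, and then use local sections of $\pi$ (guaranteed by remark \ref{rem:localsection}) to upgrade the quotient to a locally trivial bundle. First I would extract two consequences from the associativity diagram. Restricting to the diagonal $y_1 = y_2 = y_3 = y$ gives $\phi_{y,y} \circ \phi_{y,y} = \phi_{y,y}$, so $\phi_{y,y} = \id{E_y}$ because $\phi$ is an isomorphism. Setting $y_1 = y_3$ yields $\phi_{y_1,y_2} \circ \phi_{y_2,y_1} = \id{E_{y_1}}$, so the fiber maps $E_{y_2} \to E_{y_1}$ and $E_{y_1} \to E_{y_2}$ are mutual inverses.

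Next, I would define a relation on $E$ by declaring $e \sim e'$ whenever their base points $y,y' \in Y$ satisfy $\pi(y) = \pi(y')$ and $\phi_{y,y'}(e') = e$. The diagonal identity gives reflexivity, the inversion property gives symmetry, and transitivity is precisely the cocycle identity applied to triples in the same fiber of $\pi$. Set $\widetilde{E} := E/\sim$ with the quotient topology and let $\widetilde{\pi} \colon \widetilde{E} \to M$ be the well-defined projection $[e] \mapsto \pi(y)$. The obvious candidate for the isomorphism $E \to \pi^* \widetilde{E}$ is $e \mapsto (y,[e])$, which is a fiberwise bijection by construction.

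To prove local triviality with fiber $V$, I would pick $m \in M$, a neighborhood $U \ni m$ admitting a continuous section $s \colon U \to Y$ of $\pi$, and show that the composite $s^*E \to E \to \widetilde{E}$ restricts to a homeomorphism onto $\widetilde{\pi}^{-1}(U)$. Surjectivity: any $e \in E$ lying over some $m' \in U$ is $\sim$-equivalent via $\phi$ to $\phi_{s(m'),\pi_E(e)}(e) \in s^*E$. Injectivity: the only allowed equivalences between elements of $s^*E$ are along $\phi_{s(m'),s(m')} = \id{}$. A local inverse is obtained by composing a trivialization of $E$ around $s(m)$ with the (open) quotient map. Shrinking $U$ so that $s^*E$ itself trivializes transports a trivialization to $\widetilde{E}$, and the standard descent check shows that different choices of local section are intertwined by $\phi$, so the trivializations glue coherently. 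Functoriality is immediate: an endomorphism $\psi \colon E \to E$ satisfying $\pi_1^*\psi \circ \phi = \phi \circ \pi_2^*\psi$ preserves $\sim$ and hence descends to $\widetilde{E}$.

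The hard part will be verifying that $\widetilde{E}$ is genuinely a locally trivial fiber bundle and not merely a set-theoretic quotient: the cocycle identity on $Y^{[3]}$ is what makes the local models obtained from different sections canonically identified, and the existence of local sections of $\pi$ is what converts this into an honest local trivialization of $\widetilde{E}$ over $M$.
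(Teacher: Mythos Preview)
Your argument is correct; both the global quotient construction and the verification of local triviality via local sections go through as you outline. The paper, however, takes the dual route: instead of forming $E/\!\sim$ and then proving it is locally trivial, it first chooses a cover of $M$ by contractible sets $U_i$ with sections $\sigma_i \colon U_i \to Y$, sets $E_i = \sigma_i^*E$, and then \emph{glues} the $E_i$ using $\phi_{ij} = (\sigma_i,\sigma_j)^*\phi$ as transition functions; the cocycle identity on $Y^{[3]}$ becomes precisely the \v{C}ech cocycle condition $\phi_{ij}\circ\phi_{jk}=\phi_{ik}$, and $\widetilde{E} = \coprod_i E_i/\!\sim$ is locally trivial by construction. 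The paper's approach buys you local triviality for free and sidesteps any discussion of the quotient topology on $E/\!\sim$ (in particular the openness of the quotient map, which you invoke but do not prove), at the cost of making $\widetilde{E}$ a priori depend on the choice of cover and sections. Your approach is more canonical and makes functoriality immediate, but you have to work a little harder to see that the quotient really is a bundle.
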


\begin{proof}
Cover $M$ by contractible sets $M \subset \bigcup_{i \in I} U_i$ and choose sections $\sigma_i \colon U_i \to Y$. Let $E_i = \sigma_i^* E$ and denote the maps induced by $\phi$ on the double intersections $U_{ij} = U_i \cap U_j$ by $\phi_{ij} \colon E_j \to E_i$. Over the non-empty triple intersections $U_{ijk}= U_i \cap U_j \cap U_k$ the associativity condition translates into the commutative diagram
\[
  \begin{xy}
   \xymatrix{
	E_k \ar[rr]^{\phi_{jk}} \ar[drr]_{\phi_{ik}} & & E_j \ar[d]^{\phi_{ij}} \\
	& & E_i
   }
  \end{xy}
\]
In particular, setting $k = j$ we see that $\phi_{jj} = \id{E_j}$, from which we deduce $\phi_{ij}^{-1} = \phi_{ji}$. It follows that there is an equivalence relation over $U_{ij}$ generated by $(j,v) \sim (i, \phi_{ij}(v))$ for $(j,v) \in E_j$ on the space $\coprod_{i \in I} E_i$. Let 
\[
\widetilde{E} = \coprod_{i \in I} E_i \ /\ \sim
\]
be the quotient by this relation. There is a canonical projection $\widetilde{E} \to M$ turning this space into a locally trivial bundle with fiber $V$ over $M$. Let $\phi_{(y_2, y_1)} \colon E_{y_1} \to E_{y_2}$ be the isomorphism on the fibers of $E$ induced by $\phi$, then for $[i,v] \in \widetilde{E}$ and $y \in Y$ we define
\[
	\pi^*\widetilde{E} \to E \quad ; \quad (y, [i,v]) \mapsto \phi_{(y,\,\sigma_i(\pi(y)))}(v)\ ,
\]
which is well-defined, since for $\pi(y) \in U_{ij}$ we have 
\[ 
	\phi_{(y,\,\sigma_j(\pi(y)))}(v) = \phi_{(y,\,\sigma_i(\pi(y)))}( \phi_{ij}(v) ) \ .
\]
It is an isomorphism with inverse
\[
	E \to \pi^*\widetilde{E}  \quad ; \quad v \mapsto (y, [i, \phi_{(\sigma_i(\pi(y)), y)}(v)])
\]
for $v \in E_y$.
\end{proof}

\begin{remark}\label{rem:intertwiner}
Suppose $\Qb \to Y' \times_M Y$ induces a stable equivalence between two Morita bundle gerbes $\Hb \to Y^{[2]}$ and $\Hb' \to Y'^{[2]}$, i.e.\ there is an isomorphism
\[
	\pi_{Y'^{[2]}}^* \Hb' \ \simeq \ \pi_1^* \Qb \otimes_A \pi_{Y^{[2]}}^* \Hb \otimes_A \pi_2^*\Qb^* 
\]
On the fibers, this yields for $((y_1',y_2'), (y_1,y_2)) \in Y'^{[2]} \times_M Y^{[2]}$
\begin{equation} \label{eqn:stableiso}
	\Hb'_{(y_1',y_2')}\ \simeq\ \Qb_{(y_1',y_1)} \otimes_A \Hb_{(y_1,y_2)} \otimes_A \Qb_{(y_2',y_2)}^* \ ,
\end{equation}
which induces an isomorphism $\Qb_{(y_1',y_1)} \otimes_A \Hb_{(y_1,y_2)} \to \Hb'_{(y_1',y_2')}\otimes_A \Qb_{(y_2',y_2)}$. The compatibility with the multiplication maps $\mu$ and $\mu'$ translates into the following commutative diagram
\[
  \begin{xy}
   \xymatrix{
	\Hb'_{(y_1',y_2')} \otimes_A \Qb_{(y_2',y_1)} \otimes_A \Hb_{(y_1,y_2)} \ar[rr] \ar[d] & & \Hb'_{(y_1',y_2')} \otimes_A \Hb'_{(y_2',y_3')} \otimes_A \Qb_{(y_3',y_2)} \ar[d]^{\mu' \otimes \id{\Qb}} \\ 
	\Qb_{(y_1',y_3)} \otimes_A \Hb_{(y_3,y_1)} \otimes_A \Hb_{(y_1,y_2)} \ar[dr]^{\id{\Qb} \otimes \mu} & & \Hb'_{(y_1',y_3')} \otimes_A \Qb_{(y_3',y_2)} \ar[dl] \\
	& \Qb_{(y_1',y_3)} \otimes_A \Hb_{(y_3,y_2)}
   }
  \end{xy}
\]
Therefore $\Qb$ may be thought of as an intertwiner between $\Hb$ and $\Hb'$, which is compatible with the multiplicative structures.
\end{remark}

\begin{corollary}\label{cor:twistedDescent}
Let $Y \to M$ be as in the last lemma and suppose $\Qb \to Y$ is a twisted $A$-$B$-Morita bundle over $Y$. If there is an isomorphism $\phi \colon \pi_2^*\Qb \to \pi_1^*\Qb$ of twisted Morita bundles, which satisfies the conditions of lemma \ref{lem:descentlemma}, then there is a twisted $A$-$B$-Morita bundle $\widetilde{\Qb}$ over $M$, such that $\Qb \simeq \pi^*\widetilde{\Qb}$.
\end{corollary}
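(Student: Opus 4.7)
The strategy is to apply Lemma \ref{lem:descentlemma} to the underlying bundle of $\Qb$ and then transport all the twisted Morita structure (left $A$-action, $A$-valued inner product, trivialization of $\cptEnd{A}{}{\Qb}$) along the descent using the functoriality clause of that lemma. The hypothesis that $\phi$ is an isomorphism of twisted Morita bundles (not just of fiber bundles) is exactly what makes every piece of structure commute with $\phi$.

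First I would run Lemma \ref{lem:descentlemma} on $\Qb \to Y$ viewed as a locally trivial bundle with typical fiber $H$, producing a bundle $\widetilde{\Qb} \to M$ with an isomorphism $\Qb \simeq \pi^* \widetilde{\Qb}$. Working with the concrete description from the proof of Lemma \ref{lem:descentlemma}, pick a cover $M = \bigcup_i U_i$ by contractible opens with sections $\sigma_i \colon U_i \to Y$ and trivializations of $\Qb$ over $\sigma_i(U_i)$; the resulting transition functions of $\widetilde{\Qb}$ over $U_{ij}$ are built from the transition functions of $\Qb$ composed with the fibrewise components of $\phi$. Since these take values in $\U{A}{}{H}$ (because $\phi$ is an isomorphism of left Hilbert $A$-module bundles with structure group $\U{A}{}{H}$), $\widetilde{\Qb}$ inherits the structure of a left Hilbert $A$-module bundle with the correct typical fiber and structure group.

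Next I would descend the algebraic data. For every $a \in A$, left multiplication $L_a \colon \Qb \to \Qb$ is a continuous bundle endomorphism which commutes with $\phi$ by $A$-linearity of $\phi$; the functoriality statement of Lemma \ref{lem:descentlemma} therefore yields a bundle endomorphism of $\widetilde{\Qb}$ realizing the left action of $a$. The $A$-valued inner product $\scalA{\cdot}{\cdot} \colon \Qb \times_Y \Qb \to A$ is preserved fibrewise by $\phi$, so it descends to a continuous map $\widetilde{\Qb} \times_M \widetilde{\Qb} \to A$ turning each fibre into a Hilbert $A$-module. Finally, the trivialization $Y \times B^{\rm op} \to \cptEnd{A}{}{\Qb}$ intertwines with the isomorphism ${\rm Ad}_\phi \colon \pi_2^*\cptEnd{A}{}{\Qb} \to \pi_1^*\cptEnd{A}{}{\Qb}$ by the commutative square in Definition \ref{def:twistedMorita}; applying the functorial part of Lemma \ref{lem:descentlemma} to $\cptEnd{A}{}{\Qb}$ (whose descent is $\cptEnd{A}{}{\widetilde{\Qb}}$) yields a trivialization $M \times B^{\rm op} \to \cptEnd{A}{}{\widetilde{\Qb}}$.

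Since $\Qb \simeq \pi^*\widetilde{\Qb}$ is a fibrewise isomorphism of Hilbert $A$-modules and the $B$-action has also been matched, each fibre of $\widetilde{\Qb}$ is isomorphic to the corresponding fibre of $\Qb$ as an $A$-$B$-Morita equivalence, so $\widetilde{\Qb}$ is a twisted $A$-$B$-Morita bundle. The main (but ultimately routine) obstacle is bookkeeping: one must verify that the partial descents of $\Qb$, of $\cptEnd{A}{}{\Qb}$, and of the inner product are mutually compatible, i.e.\ that the descended $B$-action coincides with the action through compacts determined by the descended left $A$-action and inner product. This follows immediately from the corresponding identity on $\Qb$, since every equation among these structures holds pointwise on $Y$ and is preserved by $\phi$. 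Hence $\widetilde{\Qb}$ is the desired twisted $A$-$B$-Morita bundle over $M$, completing the proof.
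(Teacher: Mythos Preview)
Your proposal is correct and follows essentially the same approach as the paper: apply Lemma~\ref{lem:descentlemma} to obtain $\widetilde{\Qb}$ as a left Hilbert $A$-module bundle, then use the commutative square from Definition~\ref{def:twistedMorita} together with the functoriality clause to descend the trivialization $Y \times B^{\rm op} \to \cptEnd{A}{}{\Qb}$ to one over $M$. The paper's proof is terser---it treats the left Hilbert $A$-module structure as coming for free with the descent (since the structure group is already $\U{A}{}{H}$), so your separate descent of $L_a$ and the inner product in the second paragraph is redundant with what you already observed in the first, but this does no harm.
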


\begin{proof}\label{pf:twistedDescent}
By the last lemma, $\Qb$ descends to a bundle $\widetilde{\Qb}$ of left Hilbert $A$-modules over $M$ and 
\[
\begin{xy}
  \xymatrix{
	Y^{[2]} \times B^{\rm op} \ar[r] \ar[dr] & \pi_2^*\cptEnd{A}{}{\Qb} \ar[d]^{{\rm Ad}_{\phi}}\\
	& \pi_1^*\cptEnd{A}{}{\Qb}
   }
  \end{xy}
\] 
commutes. Thus, we get an induced trivialization $M \times B^{\rm op} \to \cptEnd{A}{}{\widetilde{\Qb}}$.
\end{proof}

\begin{corollary}\label{cor:equivrel}
Stable equivalence is an equivalence relation, which is weaker than isomorphism of Morita bundle gerbes.
\end{corollary}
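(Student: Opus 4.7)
The plan is to verify reflexivity, symmetry and transitivity, and finally to show that an isomorphism of Morita bundle gerbes induces a stable equivalence. For \emph{reflexivity} I would take $\Qb = \Hb$ on $Y \times_M Y = Y^{[2]}$. On the fibre over $((y_1,y_1'),(y_2,y_2'))$, the right hand side of definition~\ref{def:stableequiv} reads $\Hb_{(y_1,y_1')} \otimes_A \Hb_{(y_1',y_2')} \otimes_A \Hb^{*}_{(y_2,y_2')}$; using $\Hb^{*} \simeq \Hbop$ from remark~\ref{rem:fiberIsA} and two applications of the multiplication $\mu$ this collapses to $\Hb_{(y_1,y_2)}$. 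The associativity axiom of $\mu$ in definition~\ref{def:MoritaBG} ensures this fibrewise recipe patches into a bundle isomorphism.

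For \emph{symmetry}, if $\Qb \to Y \times_M Y'$ witnesses $\Hb \sim \Hb'$, I would set $\Qb' = \tau^{*}\Qb^{*}$, where $\tau \colon Y' \times_M Y \to Y \times_M Y'$ is the swap and $\Qb^{*}$ is the fibrewise conjugate twisted Morita bundle supplied by corollary~\ref{cor:twistedconjugate}. Tensoring the defining fibrewise isomorphism on both sides with $\Qb^{*}$ on the left and $\Qb$ on the right and invoking $\Qb^{*} \otimes_A \Qb \simeq A \simeq \Qb \otimes_A \Qb^{*}$ yields the required identification, with $\Hb$ and $\Hb'$ interchanged.

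\emph{Transitivity} is where the real work lies, and the hard part will be the coherence check. Given witnesses $\Qb_1 \to Y \times_M Y'$ for $\Hb \sim \Hb'$ and $\Qb_2 \to Y' \times_M Y''$ for $\Hb' \sim \Hb''$, I would set $\mathcal{R} = p_{12}^{*}\Qb_1 \otimes_A p_{23}^{*}\Qb_2$ on $Z = Y \times_M Y' \times_M Y''$ and descend $\mathcal{R}$ along the fibration $q \colon Z \to Y \times_M Y''$ via corollary~\ref{cor:twistedDescent}. The descent isomorphism between the two pullbacks of $\mathcal{R}$ over $Z \times_{Y \times_M Y''} Z = Y \times_M Y' \times_M Y' \times_M Y''$ is constructed as follows: restricting the $\Qb_1$-witness to the diagonal of $Y^{[2]}$ and using $\Hb_{(y,y)} \simeq A$ (remark~\ref{rem:fiberIsA}) yields $\Qb_{1,(y,y_2')} \simeq \Qb_{1,(y,y_1')} \otimes_A \Hb'_{(y_1',y_2')}$, while the $\Qb_2$-witness restricted to the diagonal of $Y''^{[2]}$ yields $\Hb'_{(y_1',y_2')} \otimes_A \Qb_{2,(y_2',y'')} \simeq \Qb_{2,(y_1',y'')}$; composing cancels the $\Hb'$ factor and gives the isomorphism $\mathcal{R}_{(y,y_2',y'')} \simeq \mathcal{R}_{(y,y_1',y'')}$. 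The technically most delicate step is checking the cocycle condition of corollary~\ref{cor:twistedDescent} over $Y \times_M Y' \times_M Y' \times_M Y' \times_M Y''$; this should reduce to the fact that the stable equivalence data intertwine the multiplication map $\mu'$ of $\Hb'$ (remark~\ref{rem:intertwiner}) together with the associativity in definition~\ref{def:MoritaBG}. Once $\Qb'' \to Y \times_M Y''$ is obtained, I would verify that it witnesses $\Hb \sim \Hb''$ by pulling back the would-be stable equivalence isomorphism to the fibre product $Z \times_M Z$, where combining the two given isomorphisms makes it manifest, and then descending once more with lemma~\ref{lem:descentlemma}.

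Finally, if $\Hb$ and $\Hb'$ are isomorphic Morita bundle gerbes over a common $Y \to M$, the reflexive witness $\Qb = \Hb$ composed with the given bundle isomorphism $\Hb \simeq \Hb'$ on the middle tensor factor supplies a stable equivalence, showing that stable equivalence is indeed no finer than isomorphism.
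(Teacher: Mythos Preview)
Your proposal is correct and follows essentially the same route as the paper: reflexivity via $\Qb=\Hb$ together with $\Hb^*\simeq\Hbop$ and $\mu$; symmetry by passing to $\Qb^*$; and transitivity by forming the tensor product of the two witnesses over $Y\times_M Y'\times_M Y''$ and descending along the $Y'$-factor using corollary~\ref{cor:twistedDescent}, with the cocycle check reducing to the intertwining property of remark~\ref{rem:intertwiner} and associativity of $\mu'$. The only cosmetic difference is that the paper inserts the diagonal factor $\Delta\Hb'$ in the middle, writing $\bar{\Qb}=\pi_{Y\times_M Y'}^*\Qb\otimes_A\Delta\Hb'\otimes_A\pi_{Y'\times_M Y''}^*\Qb'$, which lets the fibre manipulations be phrased purely in terms of $\mu'$ and the intertwining isomorphisms rather than invoking $\Hb_{(y,y)}\simeq A$ and $\Hb''_{(y'',y'')}\simeq A$ separately; since $\Delta\Hb'\simeq\underline{A}$ this is equivalent to your $\mathcal{R}$.
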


\begin{proof}\label{pf:equivrel}
To see that $\Hb$ is equivalent to itself, choose $\Qb = \Hb$ and use the isomorphism $\Hb^* \simeq \Hbop$ together with the multiplication $\mu$ on $\Hb$. Similarly it follows that stable equivalence is weaker than isomorphism of Morita bundle gerbes. To see symmetry simply exchange the roles of $\Hb$ and $\Hb'$ and replace $\Qb$ by $\Qb^*$. The hardest part is proving transitivity. Let $\Hb$, $\Hb'$ be equivalent via $\Qb$, let $\Hb'$ and $\Hb''$ be equivalent via $\Qb' \to Y' \times_M Y''$. Let 
\[
\bar{\Qb} = (\pi_{Y \times_M Y'}^*\Qb) \otimes_A (\Delta\Hb') \otimes_A (\pi_{Y' \times_M Y''}^*\Qb')	\to Y \times_M Y' \times_M Y''\ .
\]
On the fibers of $\bar{\Qb}$ we have 
\begin{align*}
	\Qb_{(y,y_1')} \otimes_A \Hb'_{(y_1',y_1')} \otimes_A \Qb'_{(y_1',y'')} & \simeq \Qb_{(y,y_1')} \otimes_A \Hb'_{(y_1',y_2')} \otimes_A \Hb'_{(y_2',y_2')} \otimes_A \Hb'_{(y_2',y_1')} \otimes_A \Qb'_{(y_1',y'')} \\
	& \simeq \Hb_{(y,y)} \otimes_A \Qb_{(y,y_2')} \otimes_A \Hb'_{(y_2',y_2')} \otimes_A \Qb'_{(y_2',y'')} \otimes_A \Hb''_{(y'',y'')} \\
	& \simeq \Qb_{(y,y_2')} \otimes_A \Hb_{(y_2',y_2')} \otimes_A \Qb'_{(y_2',y'')} 	
\end{align*}
where the last isomorphism pulls the outer factors back to the middle one and multiplies them. Using the intertwining properties of $\Qb$ and $\Qb'$ it is straightforward to see that the induced isomorphism $\bar{\pi}_1^*\bar{\Qb} \simeq \bar{\pi}_2^* \bar{\Qb}$ (with $\bar{\pi}_i \colon Y \times_M (Y')^{[2]} \times_M Y'' \to Y \times_M Y' \times_M Y''$ the two canonical projections) satisfies the conditions in corollary \ref{cor:twistedDescent}. Hence it induces a twisted Morita bundle $\Qb \circ \Qb'$ over $Y \times_M Y'$. Note that $(\Qb \circ \Qb')^* \simeq (\Qb')^* \circ \Qb^*$. Since the isomorphism
\[
	\pi_{(Y'')^{[2]}}^*\Hb'' \simeq \pi_1^*\bar{\Qb} \otimes_A \pi_{Y^{[2]}}^*\Hb \otimes_A \pi_2^*\bar{\Qb}^*
\]
commutes with the action used to define $\Qb \circ \Qb'$, it descends to a stable equivalence over $Y \times_M Y''$. 
\end{proof}

\begin{lemma}\label{lem:MoritaInvariance}
If $A$ and $B$ are unital $C^*$-algebras, which are Morita equivalent via a Hilbert $A$-$B$-bimodule $S$, then the latter induces a bijection between the pointed sets
\[
	\Tw{M}{A} \simeq \Tw{M}{B}\ .
\]
In particular for any $n \in \N$ and $S = A^n$ we get the matrix stability of this set, i.e.\ $\Tw{M}{A} \simeq \Tw{M}{M_n(A)}$. \end{lemma}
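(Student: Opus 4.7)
The plan is to construct the bijection by ``conjugation'' with the Morita equivalence $S$. Denote by $\trivial{S} \to Y$ the trivial twisted $A$-$B$-Morita bundle with fiber $S$, and by $\trivial{S^*} \to Y$ the corresponding $B$-$A$-Morita bundle with fiber $S^*$; these pull back trivially along the projections $\pi_i \colon Y^{[2]} \to Y$. Given a Morita bundle gerbe $(\Hb \to Y^{[2]}, \mu)$ for $A$, I define
\[
	\Phi(\Hb) := \pi_1^*\trivial{S^*} \otimes_A \Hb \otimes_A \pi_2^*\trivial{S}\ ,
\]
which by the twisted tensor product construction of Subsection~\ref{ssub:subsubsection_name} is a twisted $B$-$B$-Morita bundle over $Y^{[2]}$. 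The multiplication map $\mu'$ for $\Phi(\Hb)$ over $Y^{[3]}$ is built from $\mu$ together with the canonical Morita isomorphism $S \otimes_B S^* \simeq A$, collapsing the middle factor
$\pi_2^*\trivial{S} \otimes_B \pi_2^*\trivial{S^*}$ to $\pi_2^*\trivial{A}$ and then applying $\mu$ to the inner $\pi_{12}^*\Hb \otimes_A \pi_{23}^*\Hb$. Associativity of $\mu'$ follows from associativity of $\mu$ and the coherence of the Morita isomorphism $S \otimes_B S^* \simeq A$.

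Next, I would verify that $\Phi$ descends to a well-defined map of pointed sets $\Phi \colon \Tw{M}{A} \to \Tw{M}{B}$. If $\Qb \to Y \times_M Y'$ is a twisted $A$-$A$-Morita bundle implementing a stable equivalence between $\Hb$ and $\Hb'$, then $\trivial{S^*} \otimes_A \Qb \otimes_A \trivial{S}$ is a twisted $B$-$B$-Morita bundle over $Y \times_M Y'$, and tensoring the defining isomorphism of $\Qb$ by $\trivial{S^*}$ and $\trivial{S}$ (and repeatedly using $S \otimes_B S^* \simeq A$) yields a stable equivalence $\Phi(\Hb) \sim \Phi(\Hb')$ of Morita bundle gerbes for $B$. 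The trivial gerbe $M^{[2]} \times A$ is sent to $M^{[2]} \times (S^* \otimes_A S) \simeq M^{[2]} \times B$, so the distinguished element is preserved.

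For the inverse, let $\Psi \colon \Tw{M}{B} \to \Tw{M}{A}$ be the analogous construction with the $B$-$A$-Morita equivalence $S^*$ in place of $S$. Then
\[
	\Psi(\Phi(\Hb))\ =\ \trivial{S} \otimes_B \trivial{S^*} \otimes_A \Hb \otimes_A \trivial{S} \otimes_B \trivial{S^*}\ \simeq\ \trivial{A} \otimes_A \Hb \otimes_A \trivial{A}\ \simeq\ \Hb\
\]
as a Morita bundle gerbe, where the middle isomorphism uses $S \otimes_B S^* \simeq A$ on both sides; compatibility with the multiplications on the two sides follows from naturality of the Morita isomorphism. The symmetric argument handles $\Phi \circ \Psi$. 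For $S = A^n$, which is an $M_n(A)$-$A$-Morita equivalence under the standard left and right actions, this specialises to $\Tw{M}{A} \simeq \Tw{M}{M_n(A)}$.

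The main technical obstacle is purely bookkeeping: one must check that all of these bimodule-bundle isomorphisms are genuinely continuous maps of twisted Morita bundles (not just fiberwise isomorphisms) and that they fit into the required associativity diagrams. Continuity reduces to continuity of the tensor product construction already established in Subsection~\ref{ssub:subsubsection_name} (applied with trivial cocycles for the $S$-factors, which is the simple case of that argument), while associativity reduces fiberwise to the coherence of the $2$-categorical structure on the Morita $2$-category, which is standard.
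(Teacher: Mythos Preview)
Your proposal is correct and follows essentially the same approach as the paper: conjugate by $S$ and $S^*$, observe that this respects stable equivalence by conjugating the intertwiner $\Qb$ as well, and use $S \otimes_B S^* \simeq A$, $S^* \otimes_A S \simeq B$ to see that the two directions are mutually inverse. The paper's proof is considerably more terse than yours---it omits the explicit description of the new multiplication $\mu'$ and the coherence checks you spell out---but the underlying idea is identical.
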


\begin{proof}\label{pf:}
Let $\Hb$ be a Morita bundle gerbe with respect to $A$ over $M$ and $\Hb'$ be a Morita bundle gerbe with respect to $B$. Then the bijection is induced by the operations
\[
	\Hb \mapsto S^* \otimes_A \Hb \otimes_A S \quad \text{and} \quad \Hb' \mapsto S \otimes_B \Hb' \otimes_B S^*
\]
This descends to stable equivalence classes, since the bundles $\Qb$ can be conjugated with $S$ as well, i.e.\ $S^* \otimes_A \Qb \otimes_A S$. Since $S^* \otimes_A S \simeq B$ and $S \otimes_B S^* \simeq A$, this induces a bijection.
\end{proof}

\subsection{Stable Morita bundle gerbes} 
\label{sub:stable_morita_bundle_gerbes}
In this chapter we are going to discuss the stabilization of Morita bundle gerbes. Since our definition of the twisted tensor product relies on the local triviality of bundles of the form $\trivial{\rtw{H}{\sigma}}$, we have to be careful when dealing with non-unital $C^*$-algebras and Hilbert modules, which are not finitely generated (see remark \ref{rem:Bfin}). Nevertheless, we are still able to define twisted Morita bundles and Morita bundle gerbes for stabilizations of unital algebras just as above. The tensor product used to define the latter yields a locally trivial bundle since the typical fiber is isomorphic to the algebra itself as a left or right module.

\begin{definition}\label{def:stableMorBunGerbe}
Let $A$ be a unital $C^*$-algebra. Denote by $A_s = A \otimes \K$ the stabilization of $A$. A Morita bundle gerbe $\Hb_s$ for $A_s$ will be called \emph{stable Morita bundle gerbe}. Likewise, a twisted Morita bundle for $A_s$ will be called \emph{stable twisted Morita bundle}.	
\end{definition}

\begin{lemma}\label{lem:groupoid}
Every stable Morita bundle gerbe is up to isomorphism of the form given in example \ref{ex:groupoidaction}. Moreover, every twisted Morita $A_s$-$A_s$-bundle over $X$ is of the form $X \times (A_s)_{\sigma}$ for a continuous map $\sigma \colon X \to \Aut{A_s}$.
\end{lemma}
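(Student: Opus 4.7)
The plan is to establish the second (bundle) assertion first and then deduce the gerbe statement from it by transporting the multiplication map $\mu$ through the trivializations. The overall strategy for the bundle claim is: identify the typical fiber of $\Hb$ with the canonical Morita bimodule $A_s$ itself, use contractibility of the structure group to trivialize $\Hb$ as a left Hilbert $A_s$-module bundle, and finally read off the map $\sigma$ from the right action.

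First I would note that every $A_s$-$A_s$-Morita equivalence $H$ is isomorphic as a right Hilbert $A_s$-module to $A_s$ itself. This is the standard consequence of stability: since $A_s \otimes \K \simeq A_s$, any countably generated full right Hilbert $A_s$-module is isomorphic to the standard module $\HA{A_s} \simeq A_s$ by Kasparov's stabilization theorem together with the fact that $\cptEnd{}{A_s}{H} \simeq A_s$ forces $H$ to be a full corner of $A_s$, hence to $A_s$ itself. The left action then determines an automorphism $\sigma \in \Aut{A_s}$ (via $\cptEnd{}{A_s}{A_s} \simeq A_s$), so that $H \simeq (A_s)_{\sigma}$ as Hilbert $A_s$-$A_s$-bimodule.

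Consequently the structure group of $\Hb$, regarded as a left Hilbert $A_s$-module bundle with typical fiber $A_s$, is $\U{A_s}{}{A_s} = U(\bddEnd{A_s}{}{A_s}) = U(M(A_s))$ in the strict topology. By the Mingo extension of Kuiper's theorem, $U(M(A_s))$ is contractible for stable $A_s$. Hence the principal bundle associated to $\Hb$ is trivial, giving a global trivialization $\Hb \simeq X \times A_s$ of left Hilbert $A_s$-module bundles. Under this trivialization the algebra bundle isomorphism $X \times A_s \to \cptEnd{A_s}{}{\Hb}$ from Definition \ref{def:twistedMorita} becomes a continuous bundle map $X \times A_s \to X \times A_s$ which is fiberwise a $*$-automorphism; by the discussion after Definition \ref{def:auttop} this corresponds to a continuous map $\sigma \colon X \to \Aut{A_s}$, whence $\Hb \simeq X \times (A_s)_{\sigma}$.

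For the gerbe statement, apply the bundle result to $X = Y^{[2]}$ to obtain $\Hb \simeq Y^{[2]} \times (A_s)_{\lambda}$ for some continuous $\lambda \colon Y^{[2]} \to \Aut{A_s}$. Using the canonical isomorphism $(A_s)_{\lambda_1} \otimes_{A_s} (A_s)_{\lambda_2} \simeq (A_s)_{\lambda_1 \circ \lambda_2}$ from Example \ref{ex:autTwist}, the multiplication $\mu$ becomes, fiberwise over $Y^{[3]}$, a bimodule isomorphism $(A_s)_{\lambda(y_1,y_2)\circ\lambda(y_2,y_3)} \to (A_s)_{\lambda(y_1,y_3)}$. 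Every such isomorphism is right multiplication by a unitary $g(y_1,y_2,y_3) \in U(M(A_s))$ intertwining the two right actions, which forces $\lambda(y_1,y_2)\circ \lambda(y_2,y_3) = {\rm Ad}_{g(y_1,y_2,y_3)} \circ \lambda(y_1,y_3)$. Continuity of $\mu$ delivers continuity of $g$, and the associativity pentagon defining a Morita bundle gerbe translates directly into the identity $\lambda(y_1,y_2)(g(y_2,y_3,y_4)) \cdot g(y_1,y_2,y_4) = g(y_1,y_2,y_3) \cdot g(y_1,y_3,y_4)$, exactly as in Example \ref{ex:groupoidaction}.

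The main obstacle is the contractibility of $U(M(A_s))$ in the strict topology (Mingo's theorem), on which the global triviality in the second step hinges; without it one would only obtain local trivializations and a \v{C}ech-type cocycle instead of a single global $\sigma$. The identification $H \simeq A_s$ is the other crucial input and is essentially Rieffel's picture of the Picard group of a stable $C^*$-algebra; everything else, including the passage from $\mu$ to the unitary $g$ and the translation of the pentagon into the cocycle identity for $g$, is a direct fiberwise computation.
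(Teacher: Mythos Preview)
Your proposal is correct and follows essentially the same strategy as the paper: both arguments hinge on the two key ingredients that every $A_s$-$A_s$-Morita equivalence is of the form $(A_s)_{\sigma}$ (so the typical fiber is $A_s$ as a one-sided module) and that the structure group $U(M(A_s))$ is contractible in the strict topology, after which the trivialization yields $\sigma$ (resp.\ $\lambda$) and the multiplication $\mu$ is read off as right multiplication by a continuous $g\colon Y^{[3]}\to U(A_s)$ satisfying the cocycle identity from Example~\ref{ex:groupoidaction}. The only cosmetic difference is that you phrase the fiber identification via right modules and invoke Kasparov stabilization, whereas the paper simply cites the known fact that $\Aut{A_s}\to\Pic{A_s}$ is surjective for stable algebras; the content is the same.
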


\begin{proof}\label{pf:groupoid}
Let $\Hb_s$ be a stable Morita bundle gerbe. The unitary group $U(A_s) = U(M(A \otimes \K))$ is contractible. This was proven for the norm topology by Mingo in \cite{paper:UnitaryContractible}. The proof for the strict topology is easier, indeed the argument sketched in \cite[exercise 2.M]{book:WeggeOlsen} not only proves the path-connectedness, but also the strict contractibility of $U(M(A \otimes \K))$. Therefore the classifying space of this group is just a point, i.e.\ every bundle of Hilbert $A_s$-modules is trivial. In particular $\Hb_s = Y^{[2]} \times (A_s)_{\lambda}$ for a continuous map $\lambda \colon Y^{[2]} \to \Aut{A_s}$. Since $\mu$ is an isometric isomorphism of trivial left Hilbert $A_s$-module bundles with fiber $A_s$, it is induced by a continuous map $g \colon Y^{[3]} \to U(A_s)$. The associativity implies the condition on $g$ from example \ref{ex:groupoidaction}. 

If $\Qb \to X$ is a twisted $A_s$-$A_s$-Morita bundle, then its typical fibers are isomorphic to $A_s$ as left Hilbert $A_s$-modules, since any Morita self-equivalence of $A_s$ is of the form $(A_s)_{\tau}$ for some element $\tau \in \Aut{A_s}$. Again by the contractibility of $U(A_s)$, it is topologically trivial and the trivialization of the compact operator bundle yields the map $\sigma \colon X \to \Aut{A_s}$.
\end{proof}

Due to the last lemma, the conjugate bundle of a twisted $A_s$-$A_s$-Morita bundle $\Qb = X \times A_{\sigma}$ exists and is isomorphic to $X \times A_{\sigma^{-1}}$. So far, this was not so clear, since corollary~\ref{cor:twistedconjugate} only works for unital algebras. Thus, there is a well-defined notion of stable equivalence of stable Morita bundle gerbes and a pointed set $\Tw{M}{A_s}$. In the stable situation we can apply the \emph{Packer-Raeburn-stabilization trick}, which yields the following result, which is essentially proposition 5.2 in~\cite{paper:HigherCats}.

\begin{lemma}\label{lem:PackerRaeburn}
Suppose $Y$ is an orientable manifold, then every stable Morita bundle gerbe $\Hb_s \to Y^{[2]}$ is isomorphic to one, where the map $g \colon Y^{[3]} \to U(A_s)$ is constantly equal to $1$.
\end{lemma}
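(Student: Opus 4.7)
The plan is to work entirely at the level of cocycle data. By Lemma \ref{lem:groupoid}, I may write $\Hb_s = Y^{[2]} \times (A_s)_{\lambda}$ with multiplication $\mu$ encoded by a strictly continuous $g \colon Y^{[3]} \to U(A_s)$ satisfying the two identities from Example \ref{ex:groupoidaction}. An isomorphism of stable Morita bundle gerbes between $\Hb_s$ and $\Hb_s' = Y^{[2]} \times (A_s)_{\lambda'}$ (with multiplication datum $g'$) is necessarily fiberwise left multiplication by a strictly continuous $u \colon Y^{[2]} \to U(A_s)$. Tracing through the compatibility with the left $A_s$-action, the inner products, and the multiplication $\mu$ shows that such an isomorphism exists if and only if $\lambda' = \mathrm{Ad}_{u} \circ \lambda$ and
\[
g'(y_1,y_2,y_3) \;=\; u(y_1,y_2)\cdot \lambda(y_1,y_2)\bigl(u(y_2,y_3)\bigr)\cdot u(y_1,y_3)^{-1}\cdot g(y_1,y_2,y_3).
\]
Thus the lemma reduces to solving the nonabelian coboundary equation
\[
g(y_1,y_2,y_3)^{-1} \;=\; u(y_1,y_2)\cdot \lambda(y_1,y_2)\bigl(u(y_2,y_3)\bigr)\cdot u(y_1,y_3)^{-1}
\]
in strictly continuous $u$.

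To solve this, I would first descend the problem to $M$: pick a locally finite good cover $\{U_i\}$ of $M$ together with local sections $s_i \colon U_i \to Y$ of $\pi$ (available by the fibration property and paracompactness of the manifold) and pull back $(\lambda, g)$ via $s_i$ to obtain a Busby-Smith-type Čech $2$-cocycle $(\lambda_{ij}, g_{ijk})$ on $M$ with values in the crossed module $U(A_s) \to \Aut{A_s}$. The Packer-Raeburn stabilization trick, in the refined groupoid form from \cite{paper:HigherCats}, then produces strictly continuous cochains $u_{ij} \colon U_{ij} \to U(A_s)$ trivializing $g_{ijk}$. The key input is the identification $A_s \otimes \K \simeq A_s$, which allows one to absorb the $g_{ijk}$ into a genuine (rather than projective) lift using the regular representation on $\ell^2$ acting on the second $\K$-factor, combined with the strict contractibility of $U(A_s) = U(M(A \otimes \K))$ recalled in Lemma \ref{lem:groupoid}.

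To produce a global $u$, I would lift each $u_{ij}$ back up to $Y^{[2]}$ by composing with $(\pi \times \pi)$ and then patch the various lifts using a partition of unity subordinate to the cover $\{U_i \cap U_j\}$ of $M$. Here contractibility of $U(A_s)$ is used a second time: on the overlaps one has competing trivializations differing by a $U(A_s)$-valued $1$-cochain, which can be interpolated through the contraction to produce a single globally defined strictly continuous $u \colon Y^{[2]} \to U(A_s)$ satisfying the coboundary equation.

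The main obstacle is the Packer-Raeburn step, i.e.\ solving the nonabelian cocycle equation in $U(A_s)$; everything else is a fairly standard descent/gluing argument. The orientability hypothesis on $Y$ enters only to ensure that the underlying manifold is paracompact and admits the good covers and partitions of unity used in the descent, and in guaranteeing that the integration-by-averaging built into the stabilization trick makes sense; it plays no role in the algebraic heart of the construction, which is purely a consequence of the absorbing isomorphism $A_s \otimes \K \simeq A_s$ together with the (strict) contractibility of $U(A_s)$.
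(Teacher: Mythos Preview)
Your reduction to the coboundary equation is correct in spirit (up to left/right conventions), and you have correctly identified that the heart of the matter is the Packer--Raeburn regular-representation trick. However, the implementation you propose has a genuine gap, and it also misidentifies what the orientability hypothesis is doing.

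The paper does \emph{not} descend to $M$, solve a \v{C}ech problem there, and then lift back. It solves the coboundary equation directly and globally on $Y^{[2]}$ in one stroke: one realizes $\K$ as the compact operators on $\mathbb{H} = L^2(Y)$ (with respect to a Riemannian volume, which is where the manifold hypothesis on $Y$ enters), so that $U(A_s)$ acts on $L^2(Y,A_s)$, and then sets
\[
(V(y_1,y_2)f)(y_3) \;=\; g(y_1,y_2,y_3)^{-1}\cdot f(y_3).
\]
A two-line computation using the $2$-cocycle identity for $g$ gives $V_{12}\,\lambda_{12}(V_{23})\,g_{123} = V_{13}$, which is exactly the required coboundary; setting $\lambda' = \mathrm{Ad}_V \circ \lambda$ finishes the proof. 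No sections, no covers, no patching.

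Your proposed route breaks down at the ``lift and patch'' stage. Solving the \v{C}ech coboundary equation produces unitaries $u_{ij}$ defined only along the graphs of the chosen sections $(s_i,s_j)$ inside $Y^{[2]}$; it does not give you a function on all of $Y^{[2]}$, and there is no mechanism by which ``composing with $\pi\times\pi$'' extends $u_{ij}$ off those graphs. More seriously, the partition-of-unity interpolation you invoke does not make sense here: the equation you must satisfy is multiplicative in a nonabelian group, and convex combinations (or contractibility-based interpolations) of unitaries will not land in $U(A_s)$, let alone satisfy the exact cocycle identity. Contractibility of $U(A_s)$ tells you there is no \emph{obstruction} in principle, but it does not by itself furnish a solution to a nonabelian $2$-coboundary equation. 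The whole point of the Packer--Raeburn construction is to write down an explicit $V$, and the paper does this directly over $Y$ rather than over a cover of $M$.

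Finally, orientability has nothing to do with paracompactness or good covers (any manifold has those); in the paper it is invoked to get a canonical volume form so that $L^2(Y)$ is available for the regular representation. (One could in fact relax this using densities, which is why the remark following the lemma says the hypothesis on $Y$ is ultimately harmless.)
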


\begin{proof}\label{pf:PackerRaeburn}
Let $\mathbb{H} = L^2(Y)$ with respect to the measure induced by some Riemannian metric on~$Y$. Identify $\K$ with the compact operators on this Hilbert space and $U(A_s)$ with the unitary operators $u \colon \HA{A_s} \to \HA{A_s}$. Note that $\HA{A_s} = L^2(Y,A_s)$. By lemma \ref{lem:groupoid}, $\Hb_s = Y^{[2]} \times (A_s)_{\lambda}$ for a pair $(\lambda, g)$. We abbreviate $g(y_1,y_2,y_3)$ by $g_{123}$ and $\lambda(y_1,y_2)$ by $\lambda_{12}$. Let 
\[
	V \colon Y^{[2]} \to U(A_s) \quad ; \quad \left(V(y_1,y_2)f\right)(y_3) = g_{123}^{-1}\cdot f(y_3)\ .
\]
We use the abbreviation $V_{12} = V(y_1,y_2)$. Then $(\lambda_{12}(V_{23})f)(y_4) = \lambda_{12}(g_{234}^{-1})\cdot f(y_4)$ and we have
\begin{align}
	\label{eqn:cocycle}
	(V_{12} \, \lambda_{12}(V_{23})\, g_{123} \cdot f)(y_4) & = g_{124}^{-1}\cdot \lambda_{12}(g_{234}^{-1}) \cdot g_{123} \cdot f(y_4)  \\
	& = g_{134}^{-1}\cdot g_{123}^{-1} \cdot g_{123} \cdot f(y_4) = g_{134}^{-1} \cdot f(y_4) = (V_{13}f)(y_4)\ , \notag
\end{align}
where we applied the condition on $g$ from example \ref{ex:groupoidaction}. Set $\lambda'_{12} = {\rm Ad}_{V_{12}} \circ \lambda_{12}$, then
\begin{align*}
	\lambda'_{12} \circ \lambda'_{23} & = {\rm Ad}_{V_{12}} \circ \lambda_{12} \circ {\rm Ad}_{V_{23}} \circ \lambda_{23} = {\rm Ad}_{V_{12}\lambda_{12}(V_{23})} \circ \lambda_{12} \circ \lambda_{23} \\
	& = {\rm Ad}_{V_{12}\lambda_{12}(V_{23})g_{123}} \circ \lambda_{13} = {\rm Ad}_{V_{13}} \circ \lambda_{13} = \lambda'_{13}\ .
\end{align*}
Therefore $(\lambda', 1)$ satisfies the conditions of example $\ref{ex:groupoidaction}$ and yields a Morita bundle gerbe $\Hb'_s$. The isomorphism $\Hb_s \to \Hb'_s$ is now given by right multiplication with $V^*$ and equation~(\ref{eqn:cocycle}) shows that it intertwines the two multiplication maps.
\end{proof}

\begin{remark}
If we are only interested in stable equivalence classes of Morita bundle gerbes, the condition on $Y$ is no restriction: Up to stable equivalence, every Morita bundle gerbe can be realized over an orientable manifold $Y$ as we will see in the next section, which will also reveal the connection to nonabelian cohomology sets.
\end{remark}

The algebras $A$ and $A_s$ are Morita equivalent via the Hilbert $A_s$-$A$-bimodule $\HA{A}$, where $\mathbb{H}$ is a separable Hilbert space of infinite dimension. The latter is finitely generated and projective as a Hilbert $A_s = A \otimes \K$-module, since $\HA{A} = A \otimes (\K\,e_1) = (A \otimes \K)(1 \otimes e_1)$, where $e_1$ is a rank $1$-projection. Thus, 
\[
	\Hb_s \mapsto (\HA{A}) \otimes_{A_s} \Hb_s \otimes_{A_s} (\HA{A})^*
\]
turns a stable Morita bundle gerbe into an unstable one. But by remark \ref{rem:Bfin} the map
\[
	\Hb \mapsto (\HA{A}) \otimes_A \Hb \otimes_A (\HA{A})^*
\]
is also well-defined and stabilizes a Morita bundle gerbe. Both operations are inverse to each other and descend to stable equivalence classes, since the latter can be (un)stabilized similarly. Thus, we have proven

\begin{lemma}\label{lem:stability}
The matrix stability of $\Tw{M}{A}$ from lemma \ref{lem:MoritaInvariance} extends to a bijection $\Tw{M}{A} \simeq \Tw{M}{A_s}$ induced by the bimodule $\HA{A}$, i.e.\ the stable equivalence classes of Morita bundle gerbes $\Hb$ with respect to $A$ are in $1 : 1$-correspondence with the stable equivalence classes of stable Morita bundle gerbes~$\Hb_s$.
\end{lemma}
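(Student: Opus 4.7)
The plan is to essentially repeat the argument of Lemma \ref{lem:MoritaInvariance} with the Morita equivalence $S = \HA{A}$ between $A_s$ and $A$, the only subtle point being that $A_s$ is non-unital, so we invoke Remark \ref{rem:twistedModule} to guarantee that the relevant twisted tensor products still give locally trivial bundles.

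First I would observe that $S = \HA{A}$ is an $A_s$-$A$-Morita equivalence. It is $A$-finite rank (in fact finitely generated projective) since $\HA{A} \simeq A_s(1 \otimes e_1)$ for any rank-one projection $e_1 \in \K$, and one has the canonical bimodule isomorphisms $S^* \otimes_{A_s} S \simeq A$ and $S \otimes_A S^* \simeq A_s$. I would then define
\[
\Phi(\Hb) = \trivial{S} \otimes_A \Hb \otimes_A \trivial{S^*}, \qquad \Psi(\Hb_s) = \trivial{S^*} \otimes_{A_s} \Hb_s \otimes_{A_s} \trivial{S}.
\]
The multiplication maps on $\Phi(\Hb)$ and $\Psi(\Hb_s)$ are induced from the multiplications on $\Hb$, $\Hb_s$ by cancelling $S^* \otimes_{A_s} S \simeq A$ (resp.\ $S \otimes_A S^* \simeq A_s$) in the middle via the canonical pairing; the associativity diagram of Definition \ref{def:MoritaBG} is then inherited from the original gerbe. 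Local triviality of the tensor products is guaranteed as follows: for $\Phi$ one uses that $S$ is finitely generated over $A_s$, so the tensor product theory of Section \ref{ssub:subsubsection_name} applies directly; for $\Psi$ the bundle $\trivial{S^*}$ has fibers isomorphic to $\HA{A_s}$ over $A_s$, and Remark \ref{rem:Bfin} together with Remark \ref{rem:twistedModule} guarantee that tensor products of such bundles with twisted Morita bundles remain locally trivial.

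Next, I would check that $\Phi$ and $\Psi$ descend to stable equivalence classes. This is the exact analogue of the descent in Lemma \ref{lem:MoritaInvariance}: if $\Qb \to Y \times_M Y'$ is a twisted $A$-$A$-Morita bundle inducing a stable equivalence $\Hb \sim \Hb'$, then $\trivial{S} \otimes_A \Qb \otimes_A \trivial{S^*}$ is a twisted $A_s$-$A_s$-Morita bundle inducing the corresponding stable equivalence $\Phi(\Hb) \sim \Phi(\Hb')$, and similarly for $\Psi$ with $\trivial{S^*} \otimes_{A_s} (\,\cdot\,) \otimes_{A_s} \trivial{S}$.

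Finally, the compositions $\Psi \circ \Phi$ and $\Phi \circ \Psi$ act as the identity on $\Tw{M}{A}$ and $\Tw{M}{A_s}$ respectively by associativity of the internal tensor product together with the Morita isomorphisms $S^* \otimes_{A_s} S \simeq A$ and $S \otimes_A S^* \simeq A_s$:
\[
\Psi(\Phi(\Hb)) \;\simeq\; \trivial{(S^* \otimes_{A_s} S)} \otimes_A \Hb \otimes_A \trivial{(S^* \otimes_{A_s} S)} \;\simeq\; \trivial{A} \otimes_A \Hb \otimes_A \trivial{A} \;\simeq\; \Hb,
\]
and analogously for $\Phi(\Psi(\Hb_s)) \simeq \Hb_s$; these isomorphisms intertwine the multiplications by construction. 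The main (mild) obstacle is confirming that the intermediate bundles in the triple tensor products remain locally trivial despite the non-unitality of $A_s$ and the fact that $S^*$ is not finitely generated over $A_s$, but this is exactly what Remark \ref{rem:twistedModule} supplies, so no further technical work is needed.
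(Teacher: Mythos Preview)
Your overall strategy coincides with the paper's: conjugate by the Morita equivalence $S = \HA{A}$ in both directions, check that the maps descend to stable equivalence classes, and verify they are mutually inverse via $S^* \otimes_{A_s} S \simeq A$ and $S \otimes_A S^* \simeq A_s$. However, you have swapped the two justifications for local triviality, and one of your claims is false.

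For the stabilization $\Phi(\Hb) = \trivial{S} \otimes_A \Hb \otimes_A \trivial{S^*}$ the tensor products are taken over $A$, so what matters is the \emph{right} $A$-module structure of $S$. The fact that $S$ is finitely generated over $A_s$ is irrelevant here, and $S = \HA{A}$ is certainly \emph{not} of $A$-finite rank as a right $A$-module. The correct reason (and the one the paper gives) is Remark~\ref{rem:Bfin}: for $H = \HA{A}$ the bundle $\trivial{\rtw{H}{\sigma}}$ is globally trivial via $(x, v \otimes a) \mapsto (x, v \otimes \sigma(x)^{-1}(a))$, so Remark~\ref{rem:twistedModule} applies.

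For the unstabilization $\Psi(\Hb_s) = \trivial{S^*} \otimes_{A_s} \Hb_s \otimes_{A_s} \trivial{S}$ your assertion that $S^*$ has fiber $\HA{A_s}$ as a right $A_s$-module is wrong: as a right $A_s$-module one has $S^* \simeq (1 \otimes e_1)\,A_s$ for a rank-one projection $e_1 \in \K$, which is finitely generated projective (indeed $\cptEnd{}{A_s}{S^*} \simeq A$ is unital, whereas $\cptEnd{}{A_s}{\HA{A_s}} \simeq \K \otimes A_s$ is not). It is precisely this $A_s$-finite rank property that the paper invokes to conclude that $\trivial{\rtw{(S^*)}{\sigma}}$ is locally trivial via Lemma~\ref{lem:loctriv}, making $\Psi$ well defined.

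Once you exchange these two justifications, your argument matches the paper's.
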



\subsection{Nonabelian Cohomology}
In this section we will establish the connection of our results on Morita bundle gerbes with nonabelian cohomology theory, more precisely with the notion of \v{C}ech cohomology with values in a crossed module. We will not dig too deeply into the categorical background of this setup, which can be found for example in \cite{paper:Noohi}, but we will use the notion of cocycles and coboundaries straight away. To define the analogue of the Dixmier-Douady class, we need an additional assumption on the $C^*$-algebras: There is an exact sequence 
\begin{equation} \label{eq:PicSequence}
	1 \to \Inn{A} \to \Aut{A} \to \Pic{A} \ ,
\end{equation}
in which the last map is not surjective in general \cite[Proposition 3.1]{paper:BrownGreenRieffel}. For example for the commutative $C^*$-algebra $C(M)$ the Picard group is a semi-direct product of the homeomorphisms with $H^2(M, \Z)$, whereas $\Aut{A}$ consists just of the homeomorphisms. This motivates the following definition

\begin{definition}\label{def:PicardSurjective}
A $C^*$-algebra will be called \emph{Picard surjective}, if the last map in (\ref{eq:PicSequence}) is surjective.
\end{definition}

\begin{example}
The following $C^*$-algebras are Picard surjective
\begin{itemize}
	\item matrix algebras $M_n(\C)$, in fact $\Pic{M_n(\C)}$ is trivial in this case,
	\item commutative $C^*$-algebras $C_0(X)$, where $H^2(X,\Z) = 0$,
	\item noncommutative tori $A_{\theta}$ if $\theta$ is not quadratic \cite[Corollary 9]{paper:Kodaka},
	\item all stable $C^*$-algebras and the results in this section will hold for stable Morita bundle gerbes \cite{paper:BrownGreenRieffel}
\end{itemize}
This last example is probably the most important, since it will imply that a Morita bundle gerbe always has a Dixmier-Douady class after stabilization. Note that, if $A$ is Picard surjective, then $M_n(A)$ is Picard surjective as well for all $n \in \N$.
\end{example}

\begin{definition}
Let $G$ and $H$ be topological groups and $\alpha \colon G \to H$ be a group homomorphism. The triple $(G,H,\alpha)$ is called a \emph{crossed module} if $H$ acts from the left on $G$, where the action will be denoted by $^h g$ for $h \in H$ and $g \in G$, such that the following conditions are satisfied:
\begin{enumerate}[i)]
\item $^{\alpha(g')} g = g'g(g')^{-1}$
\item $\alpha(^h g) = h\alpha(g)h^{-1}$
\end{enumerate}
\end{definition}

\begin{example}
The adjoint action $\alpha = {\rm Ad} \colon U(A) \to \Aut{A}$ together with the action $^\varphi u = \varphi(u)$ for $\varphi \in \Aut{A}$ and $u \in U(A)$ defines a crossed module.
\end{example}

\begin{definition}\label{def:goodcover}
A cover $U_i \subset M$ is called \emph{good} if all sets $U_i$ as well as all higher intersections $U_{i_1i_2\dots i_n} = \bigcap_{j = 1}^n U_{i_j}$ are either empty or contractible. 
\end{definition}

\begin{definition}\label{def:nonabH1}
Let $\alpha \colon G \to H$ be a crossed module and let $U_i \subset M$ be a good cover of~$M$. A non-abelian \emph{\v{C}ech 1-cocycle} for the crossed module $(G,H,\alpha)$ is a family of maps:
\begin{eqnarray*}
	\lambda_{ij} \colon U_{ij} \to H \ ,\\
	g_{ijk} \colon U_{ijk} \to G
\end{eqnarray*}
such that $\lambda_{ij} \circ \lambda_{jk} = \alpha(g_{ijk}) \lambda_{ik}$ and $\left(^{\lambda_{ij}} g_{jkl}\right)g_{ijl} = g_{ijk}\,g_{ikl}$. Two cocycles $(\lambda_{\bullet}, g_{\bullet})$ over $U_i$ and $(\lambda'_{\bullet}, g'_{\bullet})$ over $U_j'$ are called \emph{equivalent} or \emph{cohomologous}, if there is a good refinement $V_i$ of the cover $U_k \cap U'_l$ and two maps
\begin{eqnarray*}
	r_i \colon V_i \to H \ ,\\
	\vartheta_{ij} \colon V_{ij} \to G
\end{eqnarray*}
such that $\lambda'_{ij} = \alpha(\vartheta_{ij})\,r_i\,\lambda_{ij}\,r_j^{-1}$ and $g'_{ijk} = \left(^{\lambda'_{ij}} \vartheta_{jk}\right)\vartheta_{ij}\,\left(^{r_i}g_{ijk}\right)\,\vartheta_{ik}^{-1}$ over $V_{ijk}$. A cocycle is \emph{normalized} if $\lambda_{ii} = 1$ for all $i \in I$ and $g_{ijk} = 1$ whenever two of the indices agree. A cocycle is called \emph{trivial}, if it is cohomologous to $(\lambda_{\bullet} = 1, g_{\bullet} = 1)$. The pointed set of equivalence classes of cocycles is denoted by $\check{H}^1(M, G \overset{\alpha}{\to} H)$ and will be referred to as the \emph{first non-abelian cohomology} with values in the crossed module $(G,H,\alpha)$. 
\end{definition}

\begin{remark}
Observe that the cocycle condition on $\lambda_{\bullet}$ implies $\lambda_{ii} = \alpha(g_{iii})$. Moreover, the condition on $g_{\bullet}$ yields 
\begin{eqnarray*}
	\left(^{\lambda_{ii}} g_{iij}\right)g_{iij} &=& g_{iii}\,g_{iij} \quad \Rightarrow \quad g_{iij} = g_{iii} \ ,\\
	\left(^{\lambda_{ij}} g_{jjj}\right)g_{ijj} &=& g_{ijj}\,g_{ijj} \quad \Rightarrow \quad g_{ijj} = \left(^{\lambda_{ij}} g_{jjj}\right)\ .
\end{eqnarray*}
Thus, to check that a cocycle is normalized, we only need to see $g_{iji} = 1$ for all $i,j \in I$. Every cocycle is cohomologous to a normalized one, for if we choose an ordering on the index set $I$ and set 
\begin{eqnarray*}
r_i &=& \alpha(g_{iii}) \\
\vartheta_{ij} &=& 
\begin{cases} 
	g_{iji}^{-1}g_{iii}^{-1} & \text{if } i < j \\
	g_{iii}^{-1} & \text{if } i = j \\
	1 & \text{else} 
\end{cases}
\end{eqnarray*}
this yields a coboundary connecting $(\lambda_{\bullet}, g_{\bullet})$ with a normalized cocycle $(\lambda'_{\bullet}, g'_{\bullet})$ if the latter is defined like in the previous paragraph.
\end{remark}

\subsubsection{The non-abelian cohomology class associated to a Morita bundle gerbe}
Every non-abelian \v{C}ech cocycle $(\lambda_{\bullet}, g_{\bullet})$ with values in $(U(A), \Aut{A}, {\rm Ad})$ over a good cover $U_i$ of $M$ gives rise to a Morita bundle gerbe via the following construction: Set
\[
	Y = \coprod_i U_i\ ,
\]
then we have $Y^{[2]}= \coprod_{i,j} U_{ij}$. Thus, $\lambda_{\bullet}$ induces a map $\lambda \colon Y^{[2]} \to \Aut{A}$, likewise $g_{\bullet}$ yields $g \colon Y^{[3]} \to U(A)$. Now set $\Hb_{(\lambda, g)} = Y^{[2]} \times \Atw{\lambda}$. Since $\lambda(y_1,y_2)\,\lambda(y_2,y_3) = {\rm Ad}_{g(y_1,y_2,y_3)}\,\lambda(y_1,y_3)$, the multiplication map $\mu$ over $Y^{[3]}$ is just right multiplication by $g$, where the cocycle condition on $g$ ensures the associativity of $\mu$. 

Suppose $V_k$ with index set $J$ is a refinement of the cover $U_i$ with index set $I$, i.e.\ for every $k \in J$ there is an $i \in I$ with $V_k \subset U_i$. We choose such a value $\iota(k) \in I$ for each $k \in J$ and define 
\[
	Y' = \coprod_{k} V_k \quad , \quad \lambda'_{kl} = \left.\lambda_{\iota(k)\iota(l)}\right|_{V_{kl}} \quad , \quad g'_{klm} = \left.g_{\iota(k)\iota(l)\iota(m)}\right|_{V_{klm}}\ .
\]
We define the \emph{refinement} of $\Hb_{(\lambda, g)}$ with respect to the cover $V_k$ to be the Morita bundle gerbe $\Hb_{(\lambda', g')}$. 

\begin{lemma}\label{lem:stableiso}
If two cocycles $(\lambda_{\bullet}, g_{\bullet})$ and $(\lambda'_{\bullet}, g'_{\bullet})$ are cohomologous, then the associated Morita bundle gerbes $\Hb_{(\lambda, g)}$ and $\Hb_{(\lambda',g')}$ are stably isomorphic.
\end{lemma}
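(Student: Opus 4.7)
The plan is to reduce to the case of a common good cover, then construct the intertwining twisted Morita bundle $\Qb$ explicitly from the coboundary data $(r_i, \vartheta_{ij})$, and finally verify both the fiberwise bimodule identification and its compatibility with the two multiplication maps.

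First I would pass to the common good refinement $\{V_i\}_{i \in J}$ supplied by the definition of cohomologous cocycles. The refinement of a Morita bundle gerbe as defined above arises from pulling back $\Hb$ and $\mu$ along the canonical fibration map $\coprod V_i \to \coprod U_k$, and such a pullback is stably equivalent to the original gerbe (one takes as intertwiner the pullback of $\Delta\Hb \simeq \trivial{A}$ from remark \ref{rem:fiberIsA}). Hence it suffices to treat normalized cocycles $(\lambda_\bullet, g_\bullet)$ and $(\lambda'_\bullet, g'_\bullet)$ defined over the same cover $\{V_i\}$, related by $\lambda'_{ij} = {\rm Ad}(\vartheta_{ij})\, r_i\, \lambda_{ij}\, r_j^{-1}$ and $g'_{ijk} = (^{\lambda'_{ij}} \vartheta_{jk})\,\vartheta_{ij}\,(^{r_i} g_{ijk})\,\vartheta_{ik}^{-1}$. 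Set $Y = Y' = \coprod_i V_i$, so that $Y \times_M Y' = \coprod_{ij} V_{ij}$.

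Next I would define the candidate intertwiner piecewise by
\[
 \left.\Qb\right|_{V_{ij}} \;:=\; V_{ij} \times \Atw{\lambda_{ij} \circ r_j^{-1}} \, ,
\]
a twisted $A$-$A$-Morita bundle with continuous twist $\lambda_{ij} r_j^{-1} \colon V_{ij} \to \Aut{A}$. For a point $((y_1, y_1'), (y_2, y_2')) \in (Y \times_M Y')^{[2]}$ lying over $V_i, V_j, V_k, V_l$ respectively, repeated use of the identities $\Atw{\sigma} \otimes_A \Atw{\tau} \simeq \Atw{\sigma \circ \tau}$ and $\Atw{\sigma}^* \simeq \Atw{\sigma^{-1}}$ from example \ref{ex:autTwist} identifies
\[
  \Qb_{(y_1, y_1')} \otimes_A \Hb'_{(y_1', y_2')} \otimes_A \Qb_{(y_2, y_2')}^* \;\simeq\; \Atw{\lambda_{ij}\, r_j^{-1}\, \lambda'_{jl}\, r_l\, \lambda_{kl}^{-1}} \, .
\]
Substituting the coboundary relation for $\lambda'_{jl}$ collapses the twist to $\Atw{\lambda_{ij}\,{\rm Ad}(r_j^{-1}(\vartheta_{jl}))\,\lambda_{jl}\,\lambda_{kl}^{-1}}$, and applying the cocycle identities $\lambda_{ij}\lambda_{jl} = {\rm Ad}(g_{ijl})\lambda_{il}$ and $\lambda_{ik}\lambda_{kl} = {\rm Ad}(g_{ikl})\lambda_{il}$ brings this into the form $\Atw{{\rm Ad}(u)\lambda_{ik}}$ for an explicit continuous unitary $u \in U(A)$ built from $g$, $\vartheta$, and $r$. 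Right multiplication by $u^*$ then provides a continuous bundle isomorphism onto $\pi_{Y^{[2]}}^*\Hb$, realizing the fiberwise identification of definition \ref{def:stableequiv}. As a sanity check, the case $r_i = 1$, $\vartheta_{ij} = 1$ gives $\Qb = \Hb$, which is precisely the trivial stable equivalence used in corollary \ref{cor:equivrel}.

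What remains, and what I expect to be the main technical obstacle, is to check that this isomorphism intertwines the multiplication maps $\mu$ and $\mu'$ in the sense of the hexagonal diagram in remark \ref{rem:intertwiner}. This reduces to a diagram chase in the crossed module $(U(A), \Aut{A}, {\rm Ad})$: the coboundary identity $g'_{ijk} = (^{\lambda'_{ij}}\vartheta_{jk})\,\vartheta_{ij}\,(^{r_i} g_{ijk})\,\vartheta_{ik}^{-1}$ is engineered precisely so that the two compositions of the fiberwise isomorphism with $\mu$ and $\mu'$ differ by the asserted relation, so matching the two sides becomes bookkeeping of unitaries assembled from $g$, $g'$, $\vartheta$, and $r$. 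Once this compatibility is verified, $\Qb$ implements the desired stable equivalence between $\Hb_{(\lambda, g)}$ and $\Hb_{(\lambda', g')}$.
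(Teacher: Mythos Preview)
Your proposal is correct and follows essentially the same two-step strategy as the paper: first reduce to a common cover by showing a refinement is stably equivalent to the original, then build the intertwiner $\Qb$ explicitly from the coboundary data and verify compatibility with the multiplications. The only cosmetic differences are that the paper uses the twist $r_i\lambda_{ik}$ (realizing $\Qb \otimes_A \Hb \otimes_A \Qb^* \simeq \Hb'$) while you use $\lambda_{ij}r_j^{-1}$ in the opposite direction, and the paper carries out the refinement step and the final commutative square more explicitly than your sketch; your description of the refinement intertwiner as ``the pullback of $\Delta\Hb$'' is slightly imprecise (it is really the pullback of $\Hb$ along $(y,y')\mapsto (y,\iota(y'))$, which agrees with $\Delta\Hb$ only on the graph of the refinement map), but the intended construction is the same as the paper's $\rho_{(i,k)} = \lambda_{i\iota(k)}$.
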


\begin{proof}\label{pf:stableiso}
Suppose first that $\Hb_{(\lambda',g')} \to \left(Y'\right)^{[2]}$ is a refinement of $\Hb_{(\lambda, g)} \to Y^{[2]}$ like above. Since $Y \times_M Y' = \coprod_{i,k} U_i \cap V_k$, we set $\rho_{(i,k)} = \left.\lambda_{i\iota(k)}\right|_{U_i \cap V_k}$, which yields $\rho \colon Y \times_M Y' \to \Aut{A}$. Let $\Qb = (Y \times_M Y') \times A_{\rho}$. This is a twisted Morita bundle over $Y \times_M Y'$. Let $\tau_{(i,k)} = \rho_{(k,i)}$ and $\Qb' = (Y \times_M Y') \times A_{\tau}$. Observe that 
$\Qb^* \simeq \Qb'$ as twisted Morita bundles, where the isomorphism $\Qb' \to \Qb^*$ is induced by right multiplication with $g_{\iota(k)i\iota(k)}g_{\iota(k)\iota(k)\iota(k)}$. Thus, 
\[
	\pi_1^*\Qb \otimes_A \pi_{Y'^{[2]}}^*\Hb_{(\lambda',g')} \otimes_A \pi_2^*\Qb^* \ \simeq \ \pi_1^*\Qb \otimes_A \pi_{Y'^{[2]}}^*\Hb_{(\lambda',g')} \otimes_A \pi_2^*\Qb' \simeq \left(Y \times_M Y'\right)^{[2]} \times A_{\chi}
\]
with $\chi_{(ij,kl)} = \lambda_{i\iota(k)} \lambda_{\iota(k)\iota(l)} \lambda_{\iota(l)j}$. Right multiplication by $g_{i\iota(k)\iota(l)}g_{i\iota(l)j}$ induces an isomorphism of twisted Morita bundles $A_{\chi_{(ij,kl)}} \to A_{\lambda_{ij}}$ and therefore an isomorphism of Morita bundle gerbes
\[
	\pi_1^*\Qb \otimes_A \pi_{Y'^{[2]}}^*\Hb_{(\lambda',g')} \otimes_A \pi_2^*\Qb^* \ \simeq \ \pi_{Y^{[2]}}\Hb_{(\lambda, g)}\ ,
\]
where the compatibility with the bundle gerbe multiplication follows from the cocycle condition.

Now suppose that $(\lambda_{\bullet}, g_{\bullet})$ and $(\lambda'_{\bullet}, g'_{\bullet})$ are cohomologous cocycles connected by a co\-boun\-da\-ry $(r_{\bullet}, \vartheta_{\bullet})$ like in definition \ref{def:nonabH1}. By our previous observation we may assume that everything is defined over the double and triple intersections of a \emph{single} cover $V_k$. In this case $Y = Y'$. Let $\rho_{(i,k)} = r_i \circ \lambda_{ik}$ over $V_i \cap V_k$ and $\Qb = (Y \times_M Y') \times A_{\rho}$. By a similar reasoning as above, $\Qb^*$ is isomorphic to $(Y \times_M Y') \times A_{\tau}$, where $\tau_{(i,k)} = \lambda_{ki} \circ r_i^{-1}$. Therefore
\[
	\pi_1^*\Qb \otimes_A \pi_{Y^{[2]}}^*\Hb_{(\lambda,g)} \otimes_A \pi_2^*\Qb^* \ \simeq \ \pi_1^*\Qb \otimes_A \pi_{Y^{[2]}}^*\Hb_{(\lambda,g)} \otimes_A \pi_2^*\Qb' \simeq \left(Y \times_M Y'\right)^{[2]} \times A_{\chi}
\]
with $\chi_{(ij,kl)} = r_i \lambda_{ik} \lambda_{kl} \lambda_{lj} r_j^{-1} = r_i \alpha(g_{ikl}\,g_{ilj}) \lambda_{ij} r_j^{-1} = \alpha(r_i(g_{ikl}\,g_{ilj}) \vartheta_{ij}^{-1}) \lambda'_{ij}$. Right multiplication by $r_i(g_{ikl}\,g_{ilj})\,\vartheta_{ij}^{-1}$ therefore yields an isomorphism
\[
	\pi_1^*\Qb \otimes_A \pi_{Y^{[2]}}^*\Hb_{(\lambda,g)} \otimes_A \pi_2^*\Qb^* \ \simeq \ \pi_{Y'^{[2]}}^*\Hb_{(\lambda', g')}\ .
\]
To see that this intertwines the bundle gerbe multiplications on $\Hb_{(\lambda,g)}$ and $\Hb_{(\lambda',g')}$, let $\bar{\lambda}_{ij} = r_i \lambda_{ij} r_j^{-1}$ and $\bar{g}_{ijk} = r_i(g_{ijk})$ and note that the right multiplication with $r_i(g_{ikl}\,g_{ilj})$ identifies $\left(Y \times_M Y'\right)^{[2]} \times A_{\chi}$ with $\pi_{Y'^{[2]}}^*\Hb_{(\bar{\lambda}, \bar{g})}$ in a compatible way, due to the cocycle condition on $g_{ijk}$. By the definition of $g'_{ijk}$ the following diagram commutes:
\[
  \begin{xy}
   \xymatrix{
	A_{\left(r_i\lambda_{ij}r_j^{-1}\right) \left(r_j \lambda_{jk} r_k^{-1}\right)}  \ar[rr]^{\quad \quad \cdot \vartheta_{ij}^{-1}\lambda'_{ij}(\vartheta_{jk}^{-1})} \ar[d]^{\cdot r_i(g_{ijk})} & & A_{\lambda'_{ij}\lambda'_{jk}} \ar[d]^{\cdot g'_{ijk}}\\
	A_{r_i \lambda_{ik} r_k^{-1}} \ar[rr]^{\cdot \vartheta_{ik}^{-1}} & & A_{\lambda'_{ik}}
   }
  \end{xy}
\]
Thus, right multiplication by $\vartheta_{ij}^{-1}$ induces an isomorphism $\Hb_{(\bar{\lambda}, \bar{g})} \simeq \Hb_{(\lambda', g')}$ as Morita bundle gerbes.
\end{proof}

If $A$ is Picard surjective, any Morita self-equivalence is of the form $A_{\sigma}$ for $\sigma \in \Aut{A}$. In particular the typical fibers of a Morita bundle gerbe $\Hb$ for $A$ or of a twisted $A$-$A$-Morita bundle $\Qb$ are isomorphic to $A$ as left Hilbert $A$-modules. We will see that this enables us to associate a non-abelian cohomology class to $\Hb$ by the following construction, which uses the next easy lemma.
\begin{lemma}\label{lem:isoAut}
Let $H$, $H'$ be $A$-$B$-Morita equivalences, which are isomorphic as left Hilbert $A$-modules via $\theta \colon H \to H'$, then there is an automorphism $\varphi_{\theta} \in \Aut{B}$ depending on $\theta$, such that 
\[
	H \ \simeq \ H'_{\varphi_{\theta}}
\]
via $h \mapsto \theta(h)$ as Hilbert $A$-$B$-bimodules, where the right action of $B$ on $H'_{\varphi_{\theta}}$ is twisted by $\varphi_{\theta}$.
\end{lemma}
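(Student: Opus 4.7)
The plan is to exploit the fact that in an $A$-$B$-Morita equivalence, the right $B$-action is canonically identified with $\cptEnd{A}{}{H}$, so any left $A$-linear isomorphism transports the $B$-action. Concretely, because $H$ and $H'$ are $A$-$B$-Morita equivalences, right multiplication yields $C^*$-algebra isomorphisms $\rho_H \colon B^{\rm op} \to \cptEnd{A}{}{H}$ and $\rho_{H'} \colon B^{\rm op} \to \cptEnd{A}{}{H'}$. The left $A$-linear isomorphism $\theta$ induces ${\rm Ad}_\theta \colon \cptEnd{A}{}{H} \to \cptEnd{A}{}{H'}$, $T \mapsto \theta \circ T \circ \theta^{-1}$, and I would define $\varphi_\theta \in \Aut{B}$ as the unique automorphism making the diagram $\rho_{H'} \circ \varphi_\theta^{\rm op} = {\rm Ad}_\theta \circ \rho_H$ commute.

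Unwinding this definition gives the key intertwining relation $\theta(h \cdot b) = \theta(h) \cdot \varphi_\theta(b)$ for all $h \in H$, $b \in B$, where on the right $\cdot$ denotes the original right action on $H'$. By the definition of the twisted bimodule $H'_{\varphi_\theta}$ from example \ref{ex:autTwist}, whose right action is $v \cdot b = v\,\varphi_\theta(b)$, this exactly says $\theta \colon H \to H'_{\varphi_\theta}$ is $A$-$B$-bilinear.

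It then remains to check that $\theta$ preserves the $B$-valued inner products, i.e.\ that $\scalB{\theta(h_1)}{\theta(h_2)}^{\varphi_\theta} = \varphi_\theta^{-1}\bigl(\scalB{\theta(h_1)}{\theta(h_2)}\bigr)$ equals $\scalB{h_1}{h_2}$. For this I would apply the compatibility equation (\ref{eqn:switchscal}) twice: on one hand $\theta(\scalA{h_1}{h_2}\cdot h_3) = \scalA{\theta(h_1)}{\theta(h_2)}\cdot \theta(h_3) = \theta(h_1)\cdot \scalB{\theta(h_2)}{\theta(h_3)}$, and on the other hand $\theta(h_1 \cdot \scalB{h_2}{h_3}) = \theta(h_1) \cdot \varphi_\theta(\scalB{h_2}{h_3})$ by the intertwining relation just established. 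Equating and using that the right $B$-action on $H'$ is faithful (being an isomorphism onto $\cptEnd{A}{}{H'}$) yields $\varphi_\theta(\scalB{h_2}{h_3}) = \scalB{\theta(h_2)}{\theta(h_3)}$, which is exactly the required identity. The $A$-valued inner product is preserved by hypothesis on $\theta$.

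There is no real obstacle beyond careful bookkeeping; the only subtle point is the direction in which $\varphi_\theta$ appears (and whether to twist by $\varphi_\theta$ or $\varphi_\theta^{-1}$), which is fixed by matching the sign conventions in example \ref{ex:autTwist}. Faithfulness of the right action, which makes $\varphi_\theta$ well-defined and unique, is the one place where the Morita assumption on $H'$ enters essentially.
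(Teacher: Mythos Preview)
Your proposal is correct and follows essentially the same route as the paper: you define $\varphi_\theta$ exactly as the paper does, namely as $\Psi_{H'}^{-1} \circ {\rm Ad}_\theta \circ \Psi_H$ (in the paper's notation), and then verify preservation of the $B$-valued inner product via the compatibility equation~(\ref{eqn:switchscal}). The only cosmetic difference is that the paper packages the inner-product check into the single formula $\Psi_H(\scalB{x}{y})(z) = \scalA{z}{x}\cdot y$ (an immediate consequence of~(\ref{eqn:switchscal})) and reads off $\varphi_\theta(\scalB{x}{y}) = \scalB{\theta(x)}{\theta(y)}$ directly, whereas you arrive at the same identity by cancelling the faithful right action; these are two phrasings of the same computation.
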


\begin{proof}\label{pf:isoAut}
Let $\Psi_H \colon B^{\rm op} \to \cptEnd{A}{}{H}$ and $\Psi_{H'} \colon B^{\rm op} \to \cptEnd{A}{}{H'}$ be the two algebra isomorphisms defining the right action of $B$ on $H$, respectively $H'$. If $H'$ is now twisted with
\[
\varphi_{\theta} \colon B^{\rm op} \overset{\Psi_H}{\longrightarrow} \cptEnd{A}{}{H} \overset{{\rm Ad}_{\theta}}{\longrightarrow} \cptEnd{A}{}{H'} \overset{\Psi_{H'}^{-1}}{\longrightarrow} B^{\rm op}
\]
then the isomorphism $\theta$ intertwines the two actions of $B$ on $H$ and $H'_{\varphi_{\theta}}$. Using 
\[
	\Psi_H(\scalB{x}{y})(z) = \scalA{z}{x}{y}
\]
we also see that $\theta$ preserves both inner products.
\end{proof}

Let $\Hb$ be a Morita bundle gerbe for a Picard surjective $C^*$-algebra $A$. Choose a good cover $U_i$ of $M$ such that there exist sections $\sigma_i \colon U_i \to Y$ of $\pi \colon Y \to M$. Note that $m \mapsto (\sigma_i(m), \sigma_j(m))$ yields a well-defined map to $Y^{[2]}$ and set $\Hb_{ij} = (\sigma_i, \sigma_j)^*\Hb$. This is a twisted $A$-$A$-Morita bundle over $U_{ij}$. Choose trivializations $\kappa_{ij} \colon \Hb_{ij} \to \trivial{A}$ as left Hilbert $A$-module bundles. By lem\-ma~\ref{lem:isoAut} there is a map $\lambda_{ij} \colon U_{ij} \to \Aut{A}$ induced by $\kappa_{ij}$ such that $(\Hb_{ij})_x \simeq \Atw{\lambda_{ij}(x)}$ as a Hilbert $A$-$A$-bimodule. Over the triple intersections we have the multiplication map $\mu \colon \Hb_{ij} \otimes_A \Hb_{jk} \to \Hb_{ik}$, which induces the following isomorphism of twisted Morita bundles
\[
 \Atw{\lambda_{ij}\lambda_{jk}} \to \Atw{\lambda_{ij}} \otimes_A \Atw{\lambda_{jk}} \to \Hb_{ij} \otimes_A \Hb_{jk} \to \Hb_{ik} \to \Atw{\lambda_{ik}}\ .
\]
Since it is in particular an isomorphism of left Hilbert $A$-module bundles, it is given by right multiplication with a unitary operator. Therefore it corresponds to a function $g_{ijk} \colon U_{ijk} \to U(A)$. Necessarily, we have $\lambda_{ij}\lambda_{jk} = {\rm Ad}_{g_{ijk}}\lambda_{ik}$. Due to the associativity of $\mu$, the following diagram commutes
\[
\begin{xy}
	\xymatrix{
		\Atw{\lambda_{ij}\lambda_{jk}\lambda_{kl}} \ar[r] \ar@{=}[d] & \Atw{\lambda_{ij}\lambda_{jk}} \otimes_A \Atw{\lambda_{kl}}  \ar[r]^{\cdot g_{ijk} \otimes \id{}} & \Atw{\lambda_{ik}} \otimes_A \Atw{\lambda_{kl}} \ar[r]^/0.8em/{\cdot g_{ikl}}& \ar@{=}[d] \Atw{\lambda_{il}} \\
		\Atw{\lambda_{ij}\lambda_{jk}\lambda_{kl}} \ar[r] & \Atw{\lambda_{ij}} \otimes_A \Atw{\lambda_{jk}\lambda_{kl}}  \ar[r]^{\cdot \id \otimes g_{jkl}} & \Atw{\lambda_{ij}} \otimes_A \Atw{\lambda_{jl}} \ar[r]^/0.8em/{\cdot g_{ijl}}& \Atw{\lambda_{il}}
		}
\end{xy}
\]
which implies the cocycle identity. Thus, we end up with a cocycle $\omega(\Hb) = (\lambda_{\bullet}, g_{\bullet}) \in \check{H}^1(M, U(A) \to \Aut{A})$, which we will call the \emph{non-abelian Dixmier Douady class}. We still need to check that $\omega(\Hb)$ is independent of the choices of $\sigma_i$ and $\kappa_{ij}$ in its cohomology class. This will follow from the next lemma.

\begin{lemma}\label{lem:omegaHb}
Let $A$ be a Picard surjective $C^*$-algebra. The class $\omega(\Hb)$ is independent of all choices up to coboundaries and the assignment $\Hb \to \omega(\Hb)$ induces a $1:1$ correspondence of $\check{H}^1(M, U(A) \to \Aut{A})$ with the stable equivalence classes of Morita bundle gerbes, i.e.
\[
	\Tw{M}{A}\ \simeq\ \check{H}^1(M, U(A) \to \Aut{A})\ .
\]
\end{lemma}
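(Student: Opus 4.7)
The plan is to show that $\omega$ descends to a well-defined map $\bar\omega \colon \Tw{M}{A} \to \check{H}^1(M, U(A)\to\Aut{A})$ whose two-sided inverse is the map induced by $(\lambda_\bullet, g_\bullet)\mapsto \Hb_{(\lambda, g)}$ constructed before the lemma; that this latter map descends to cohomology classes is precisely Lemma \ref{lem:stableiso}. Four verifications remain: (a) $\omega(\Hb)$ is independent of the choices of good cover, sections $\sigma_i$, and trivializations $\kappa_{ij}$, up to coboundary; (b) stable equivalence of Morita bundle gerbes produces cohomologous cocycles; (c) $\omega(\Hb_{(\lambda, g)})$ equals $(\lambda_\bullet, g_\bullet)$ on the nose; and (d) $\Hb_{\omega(\Hb)}$ is stably equivalent to $\Hb$.

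For (a), I first fix the cover and the sections and replace $\kappa_{ij}$ by $\kappa'_{ij}$. Since left Hilbert $A$-module endomorphisms of the trivial bundle $\trivial{A}$ are right multiplications by continuous maps into $U(A)$, the difference $\kappa'_{ij}\circ\kappa_{ij}^{-1}$ determines $\vartheta_{ij}\colon U_{ij}\to U(A)$, and Lemma \ref{lem:isoAut} applied to the bimodule $\Atw{\lambda_{ij}}$ shows that conjugation by $\vartheta_{ij}$ twists $\lambda_{ij}$ and modifies $g_{ijk}$ exactly by the coboundary formulas with $r_i=1$. Changing a section $\sigma_i$ to $\sigma'_i$ yields a map $(\sigma_i,\sigma'_i)\colon U_i\to Y^{[2]}$ whose pullback $(\sigma_i,\sigma'_i)^*\Hb$ is a twisted Morita bundle over the contractible $U_i$, hence trivialisable in the form $U_i\times \Atw{r_i}$ via Picard surjectivity and Lemma \ref{lem:isoAut}; the multiplication $\mu$ then identifies the $\Hb_{ij}$ computed from the two choices by the coboundary formulas involving these $r_i$. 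Independence of the good cover follows by passing to a common refinement and reducing to the two cases above.

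For (b), if $\Qb\to Y\times_M Y'$ realises the stable equivalence, I choose a good cover of $M$ admitting sections $s_i\colon U_i\to Y\times_M Y'$ and set $\sigma_i = \pi_Y\circ s_i$, $\sigma'_i=\pi_{Y'}\circ s_i$. Trivialising $s_i^*\Qb$ as a left Hilbert $A$-module bundle over contractible $U_i$ and invoking Lemma \ref{lem:isoAut} yields maps $r_i\colon U_i\to\Aut{A}$, while on double intersections the compatibility with $\mu$ and $\mu'$ displayed in Remark \ref{rem:intertwiner} supplies the $\vartheta_{ij}$ and checks both coboundary relations. For (c), applying $\omega$ to $\Hb_{(\lambda, g)}$ over $Y=\coprod U_i$, using the canonical inclusions $\sigma_i\colon U_i\hookrightarrow Y$ and identity trivialisations, returns $(\lambda_\bullet, g_\bullet)$ by construction. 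For (d), the sections $\sigma_i\colon U_i\to Y$ assemble into a surjective map $Y'=\coprod U_i\to Y$ over $M$, and the family $\Hb_{ij}$ with trivialisations $\kappa_{ij}$ glued via $\mu$ becomes a twisted Morita bundle over $Y\times_M Y'$ that implements the stable equivalence between $\Hb_{\omega(\Hb)}$ and $\Hb$.

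The main obstacle is the bookkeeping in (a) and (b): one must track simultaneously how a change of section alters $\Hb_{ij}$ and how its interaction with $\mu$ over triple intersections forces $r_i$ and $\vartheta_{ij}$ to satisfy both cocycle conditions at once. In particular, verifying that the $\vartheta_{ij}$ extracted from the compatibility of $\mu$ with the trivialisations of $\Qb$ in (b) satisfies the twisted identity $g'_{ijk}=({}^{\lambda'_{ij}}\vartheta_{jk})\,\vartheta_{ij}\,({}^{r_i}g_{ijk})\,\vartheta_{ik}^{-1}$ amounts to chasing the associativity hexagon of Remark \ref{rem:intertwiner} through the various trivialisations, and keeping the left and right twistings straight in this chase is the most delicate point.
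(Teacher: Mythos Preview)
Your proposal is correct and follows essentially the same strategy as the paper. The main organizational difference is that the paper unifies your steps (a) and (b) into a single argument: it observes that independence of the choice of sections is a special case of invariance under stable equivalence, with the intertwining bundle taken to be $\Qb_l = (\sigma'_l,\sigma_l)^*\Hb$ (exactly the bundle you invoke when changing sections), so the coboundary $(r_\bullet,\vartheta_\bullet)$ is extracted once and the commutative square forcing the relation on $g'_{ijk}$ is written down only once. Your modular decomposition covers the same ground with a bit more repetition.

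For step (d) the paper makes the stable-equivalence bundle completely explicit: it sets $\rho_i\colon Y\times_M U_i \to Y^{[2]}$, $(y,x)\mapsto (y,\sigma_i(x))$, assembles these into $\rho\colon Y\times_M Y'\to Y^{[2]}$, and takes $\Qb=\rho^*\Hb$. Your description (``the family $\Hb_{ij}$ with trivialisations $\kappa_{ij}$ glued via $\mu$ becomes a twisted Morita bundle over $Y\times_M Y'$'') is slightly imprecise, since the $\Hb_{ij}$ live over $U_{ij}$ rather than over the components $Y\times_M U_i$ of $Y\times_M Y'$; what you actually need is this pullback $\rho^*\Hb$, after which the isomorphism $\rho_i^*\Hb \otimes_A \Atw{\lambda_{ij}} \otimes_A \rho_j^*\Hb^* \simeq \left.\Hb\right|_{\pi^{-1}(U_{ij})}$ via $\kappa_{ij}^{-1}$ and $\mu$ gives the required stable equivalence.
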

\begin{proof}
In fact, the proof that $\omega$ is well-defined on stable equivalence classes and that its class is independent of choices both rest upon the same argument: If $U_i'$ is another good cover of $M$ with sections $\sigma_i' \colon U_i' \to Y$, pullback bundles $\Hb'_{ij}$ and trivializations $\kappa_{ij}' \colon \Hb'_{ij} \to \Atw{\lambda_{ij}'}$ for automorphisms $\lambda_{ij}'$ induced by $\kappa_{ij}'$, then we choose a good refinement $V_l$ of $U_i \cap U_j'$. Let $\Qb_l = (\sigma_l', \sigma_l)^*\Hb$. This is a twisted Morita bundle over $V_l$ and due to remark \ref{rem:fiberIsA} we have $\Qb_l^* \simeq (\sigma_l, \sigma_l')^*\Hb$ as twisted Morita bundles. Therefore the bundle gerbe multiplication induces an isomorphism
\[
	\Hb'_{kl} \simeq \Qb_k \otimes_A \Hb_{kl} \otimes_A \Qb_l^*\ .
\]
Now choose trivializations $\kappa_{kl} \colon \Hb_{kl} \to \Atw{\lambda_{kl}}$, $\kappa'_{kl} \colon \Hb'_{kl} \to \Atw{\lambda'_{kl}}$ and $\rho_k \colon \Qb_k \to \Atw{r_k}$. The induced isomorphism $\Atw{\lambda'_{kl}} \to \Atw{r_k\lambda_{kl}r_l^{-1}}$ is again given by right multiplication with a unitary $\vartheta_{kl} \in U(A)$ and we have 
\[
	\lambda'_{kl} = {\rm Ad}_{\vartheta_{kl}}\,r_k\,\lambda_{kl}\,r_l^{-1}
\]
Since $\Hb'_{kl} \to \Qb_k \otimes_A \Hb_{kl} \otimes_A \Qb_l^*$ is an isomorphism of Morita bundle gerbes and therefore intertwines the multiplication maps we have a commutative square,
\[
\begin{xy}
	\xymatrix{
		\Atw{\lambda'_{kl}\lambda'_{lm}} \ar[r]^{\cdot g'_{klm}} \ar[d]_{\cdot \lambda'_{kl}(\vartheta_{lm})\vartheta_{kl}} & \Atw{\lambda'_{km}} \ar[d]^{\cdot \vartheta_{km}} \\
		\Atw{\lambda_{kl}\lambda_{lm}} \ar[r]^{\cdot g_{klm}} & \Atw{\lambda_{km}}
 	}
\end{xy}
\]
which proves that the two cocycles are actually cohomologous. 

It remains to prove that $\omega$ is well-defined and injective on stable equivalence classes. So suppose $\Hb \to Y^{[2]}$ and $\Hb' \to \left(Y'\right)^{[2]}$ are two Morita bundle gerbes, which are stably equivalent via $\Qb \to Y \times_M Y'$. Choose good covers $U_i$ and $U_i'$ of $M$ for $\Hb$ and $\Hb'$ like above and refine $U_i \cap U_j'$ to a good cover $V_l$, for which there exist sections $\theta_l \colon V_l \to Y \times_M Y'$, let $\sigma_l \colon V_l \to Y$ and $\sigma_l' \colon V_l \to Y'$ be the induced sections of $Y$ and $Y'$. Set $\Qb_l = \theta_l^*\Qb$, $\Hb_{kl} = (\sigma_k, \sigma_l)^*\Hb$ and define $\Hb'_{kl}$ similarly. The stable equivalence now yields an isomorphism of twisted Morita bundles $\Hb'_{kl} \to \Qb_k \otimes_A \Hb_{kl} \otimes_A \Qb_l^*$. Now proceed exactly as above to see that the corresponding cocycles are cohomologous. 

By lemma \ref{lem:stableiso}, the injectivity on stable isomorphism classes follows if we can prove that $\Hb$ is stably isomorphic to some Morita bundle gerbe of the form $\Hb_{(\lambda,g)}$, where $(\lambda_{\bullet}, g_{\bullet})$ is a representative for $\omega(\Hb)$. So, let $U_i$ be a good cover, $(\lambda_{\bullet}, g_{\bullet})$ be a representative of $\omega(\Hb)$ over $U_i$, let $\sigma_i$ and $\kappa_{ij}$ be as above and consider
\[
	\rho_i \colon \pi^{-1}(U_i) = Y \times_M U_i \to Y^{[2]} \quad ; \quad (y,x) \mapsto (y, \sigma_i(x))
\]
Let $Y' = \coprod_i U_i$, then $Y \times_M Y' = \coprod_{i} Y \times_M U_i$ and the collection of the $\rho_i$ yields a map $\rho \colon Y \times_M Y' \to Y^{[2]}$. Let $\Qb = \rho^*\Hb$, then $\id{\Hb} \otimes \kappa_{ij}^{-1} \otimes \id{\Hb}$ and $\mu$ yield isomorphisms
\[
	\rho_i^*\Hb \otimes_A A_{\lambda_{ij}} \otimes_A \rho_j^*\Hb^* \ \simeq \ \rho_i^*\Hb \otimes_A \Hb_{ij} \otimes_A \rho_j^*\Hb^*\ \simeq \ \left.\Hb\right|_{\pi^{-1}(U_{ij})}\ ,
\]
which fit together to form
\[
	\pi_1^*\Qb \otimes_A \pi_{Y'^{[2]}}^* \Hb_{(\lambda, g)} \otimes_A \pi_2^*\Qb^*\ \simeq \ \pi_{Y^{[2]}}^*\Hb\ .
\]
By the definition of $g_{ijk}$ this isomorphism is compatible with the multiplication map~$\mu$.
\end{proof}

\begin{corollary}\label{cor:stableinvariance}
Let $A$ be a unital $C^*$-algebra (not necessarily Picard surjective). Then we have bijections of pointed sets
\[
	\Tw{M}{A} \simeq \Tw{M}{A_s} \simeq \check{H}^1(M, U(A_s) \to \Aut{A_s}) \ .
\]
Moreover, there is a surjection of pointed sets
\begin{equation} \label{eq:AutAs}
	\check{H}^1(M, \Aut{A_s}) \to \Tw{M}{A_s}\ ,
\end{equation}
which sends a principal $\Aut{A_s}$-bundle $P \to M$ to the Morita bundle gerbe $\Hb \to P^{[2]}$ from example \ref{ex:groupaction}, i.e.\ every element in $\Tw{M}{A}$ arises from a principal $\Aut{A_s}$-bundle.
\end{corollary}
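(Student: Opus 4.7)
The plan is to assemble the three pieces of the corollary from results already in place in the excerpt. For the two bijections, $\Tw{M}{A} \simeq \Tw{M}{A_s}$ is precisely Lemma \ref{lem:stability}, induced by tensoring with the bimodule $\HA{A}$. The second bijection $\Tw{M}{A_s} \simeq \check{H}^1(M, U(A_s) \to \Aut{A_s})$ is an application of Lemma \ref{lem:omegaHb} to the stabilization $A_s = A \otimes \K$, which is legitimate because every stable $C^*$-algebra is Picard surjective (this is one of the examples listed after Definition \ref{def:PicardSurjective}, coming from the Brown--Green--Rieffel theorem). Thus the non-abelian Dixmier--Douady class $\omega$ from Lemma \ref{lem:omegaHb} is available for $A_s$ and provides the desired identification.

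For the map (\ref{eq:AutAs}), I would first describe it on cocycles. A class in $\check{H}^1(M, \Aut{A_s})$ is represented by a cocycle $\lambda_{ij} \colon U_{ij} \to \Aut{A_s}$ satisfying $\lambda_{ij}\lambda_{jk} = \lambda_{ik}$ strictly. Viewing this as a crossed-module cocycle $(\lambda_{\bullet}, g_{\bullet})$ in $\check{H}^1(M, U(A_s) \to \Aut{A_s})$ with $g_{ijk} \equiv 1$ gives a natural map. One then checks that under the bijection of Lemma \ref{lem:omegaHb} and the explicit construction $\Hb_{(\lambda,1)}$ from the last paragraph of its proof, this map sends the principal $\Aut{A_s}$-bundle $P$ built from $\lambda_{ij}$ to the Morita bundle gerbe of Example \ref{ex:groupaction} (with $\varphi = \id{\Aut{A_s}}$): local sections of $P$ over $U_i$ identify the canonical map $P^{[2]} \to \Aut{A_s}$ with the local cocycle $\lambda_{ij}$, so the two constructions agree up to stable equivalence.

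Surjectivity is where the Packer--Raeburn stabilization trick does the work. Given any class in $\Tw{M}{A_s}$, represent it by a cocycle $(\lambda_{\bullet}, g_{\bullet})$ on a good cover $U_i$ of $M$ and form the associated gerbe $\Hb_{(\lambda, g)}$ over $Y = \coprod_i U_i$. Since $Y$ is a disjoint union of contractible open subsets of the manifold $M$, it is itself an orientable manifold, so Lemma \ref{lem:PackerRaeburn} applies and produces an isomorphic gerbe with $g$-part constantly equal to $1$. The main step is to observe that in the proof of Lemma \ref{lem:PackerRaeburn} the new datum $\lambda'_{ij} = \mathrm{Ad}_{V_{ij}} \circ \lambda_{ij}$ satisfies $\lambda'_{ij}\lambda'_{jk} = \lambda'_{ik}$ on the nose (this is verified in the calculation following equation (\ref{eqn:cocycle}) in that proof), hence $\lambda'$ is an honest $\Aut{A_s}$-cocycle and defines a class in $\check{H}^1(M, \Aut{A_s})$ mapping to $[\Hb_s]$. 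The only real obstacle here is this verification; it is already contained in the Packer--Raeburn lemma, so no new work is required, and surjectivity follows.
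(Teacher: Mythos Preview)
Your proof is correct and follows essentially the same route as the paper: Lemma~\ref{lem:stability} for the first bijection, Lemma~\ref{lem:omegaHb} (applied to the Picard-surjective algebra $A_s$) for the second, and then the combination of the cocycle representation from the proof of Lemma~\ref{lem:omegaHb} with the Packer--Raeburn trick (Lemma~\ref{lem:PackerRaeburn}) over the orientable manifold $Y = \coprod_i U_i$ for surjectivity. If anything, you are slightly more explicit than the paper in noting why Lemma~\ref{lem:omegaHb} applies to $A_s$ and why the auxiliary space $Y$ satisfies the orientability hypothesis of Lemma~\ref{lem:PackerRaeburn}.
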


\begin{proof}\label{pf:stableinvariance}
The first bijection was established in lemma \ref{lem:stability} and is given by stabilization, the second is lemma \ref{lem:omegaHb} via $[\Hb_s] \mapsto \omega(\Hb_s)$. It remains to prove the surjectivity of the map in the second claim. By the proof of lemma~\ref{lem:omegaHb} and lemma \ref{lem:PackerRaeburn} a stable Morita bundle gerbe $\Hb_s$ is isomorphic to one of the form $\Hb_{(\lambda,1)}$ for an $\Aut{A_s}$-cocycle $[\lambda] \in \check{H}^1(M, \Aut{A_s})$ over a good cover $U_i$. Let $P$ be the principle $\Aut{A_s}$-bundle described by $\lambda$, then the bundle which is build like in example \ref{ex:groupaction} from $P$ is stably equivalent to $\Hb_{(\lambda,1)}$.
\end{proof}

Note that $\check{H}^1(M, \Aut{M_n(A)})$ maps to $\Tw{M}{A} = \Tw{M}{A_s}$ using the construction in example \ref{ex:groupaction}. This map commutes with stabilization in the following sense
\[
  \begin{xy}
   \xymatrix{
	\check{H}^1(M, \Aut{M_n(A)}) \ar[r] \ar[d]& \Tw{M}{A} \ar@{=}[d] \\
	\check{H}^1(M, \Aut{A_s}) \ar[r]^{(\ref{eq:AutAs})}& \Tw{M}{A}
   }
  \end{xy}
\]
where the vertical map on the left hand side is induced by $\Aut{M_n(A)} \to \Aut{A_s}$, which sends an automorphism $\sigma$ of $A \otimes M_n(\C)$ to $\sigma \otimes \id{\K} \in \Aut{A \otimes M_n(\C) \otimes \K} \simeq \Aut{A \otimes \K}$. Choosing different isomorphisms $M_n(\C) \otimes \K \to \K$ changes $\Aut{M_n(A)} \to \Aut{A_s}$ by conjugation with an inner automorphism, which has no effect on the isomorphism classes of principal $\Aut{A_s}$-bundles.

\begin{definition}\label{def:matrixstable}
A class $[\Hb] \in \Tw{M}{A}$ is called \emph{matrix stable} if there exists an $n \in \N$, such that $[\Hb]$ lies in the image of the map $\check{H}^1(M, \Aut{M_n(A)}) \to \Tw{M}{A}$. We call a bundle of $C^*$-algebras $\Ab \to M$ with fiber $M_n(A)$ affiliated with $\Hb$ if the isomorphism class of its principal $\Aut{M_n(A)}$-bundle in $\check{H}^1(M, \Aut{M_n(A)})$ is mapped to $[\Hb]$.
\end{definition}

\begin{example}\label{ex:MatrixStableForMnC}
If $A = \C$, then $\Tw{M}{\C} \simeq \check{H}^1(M, U(\mathbb{H}) \to PU(\mathbb{H})) \simeq \check{H}^2(M, U(1)) \simeq H^3(M,\Z)$ is a group and (\ref{eq:AutAs}) is an isomorphism. A class $[\Hb]$ in this group is matrix stable if its classifying map $\varphi$ factors as 
\[
	\varphi \colon M \to BPU(n) \to BPU(\mathbb{H}) \simeq K(\Z,3)\ .
\]
It was shown in \cite{paper:AtiyahSegal} that this happens, precisely if $[\Hb] \in H^3(M,\Z)$ is torsion. Thus, we can see matrix stable elements in $\Tw{M}{A}$ as a generalization of torsion classes, even though this set carries \emph{no group structure} in general.
\end{example}

\subsection{Nonabelian Bundle Gerbes and Principal Bibundles} 
\label{ssub:nonabelian_bundle_gerbes_and_principal_bibundles}
Another interpretation of nonabelian cohomology is in terms of nonabelian bundle gerbes, which are based on a theory of principal bibundles \cite{paper:JurcoNonAbelian}. In this section we will show how Morita bundle gerbes fit into the conceptual picture of nonabelian bundle gerbes.

\begin{definition}
Let $\alpha \colon G \to H$ be a crossed module. A \emph{$(G,H,\alpha)$-crossed module bundle} is a principal $G$-bundle $P$ together with a trivialization $\Psi \colon P \to H$ of the principal $H$ bundle $P \times_{\alpha} H$ associated to $P$ via $\alpha$.
\end{definition}

\begin{remark} 
We may think about $(G,H,\alpha)$-bundles in terms of principal $G$-$G$-bibundles, where the left action of $G$ is induced by $\Psi$ via
\[
	g \cdot p = p\cdot ^{\Psi(p)}\!g
\]
Indeed we have
\begin{eqnarray*}
g' \cdot (pg) &=& (pg) \cdot ^{\Psi(pg)}\!g' = pg\cdot ^{\alpha(g^{-1})} \left(^{\Psi(p)}g'\right) \\
&=& pg\,g^{-1} \left(^{\Psi(p)}g'\right) g = p \left(^{\Psi(p)}g'\right) g = (g' \cdot p) \cdot g\ ,
\end{eqnarray*}
therefore the left and right actions commute. If we start with a principal $G$-$G$-bibundle $P$, we get $\Psi$ via $\Psi(p) = h$, where $h \in \Aut{G}$ is such that $g \cdot p = p\cdot\,^h\!g$. The $(G,H,\alpha)$-crossed module bundles correspond to those bibundles for which $\Psi(p) \in H$ corresponds to $\alpha$ by the construction above (for further details see \cite{paper:JurcoNonAbelian}).
\end{remark}

\begin{example}\label{ex:FrameBundle}
Let $\Qb \to X$ be a twisted $A$-$A$-Morita bundle over a connected space $X$ with typical fiber $H$, which we will view as a right Hilbert $A$-module bundle with a trivialization $\Psi \colon X \times A \to \cptEnd{}{A}{\Qb}$. Note that $\cptEnd{}{A}{H} = A$ and $\U{}{A}{H} = U(A)$. As a locally trivial right Hilbert $A$-module bundle there is a principal $U(A)$-bundle $\Pb \to X$ associated to it. At a point $x \in X$ the fiber of $\Pb$ consists of all right $A$-linear isometric isomorphisms $H \to \Qb_x$ and the right action of $U(A)$ is given by pre-concatenation. Likewise, there is a principal $\Aut{A}$-bundle induced by $\cptEnd{}{A}{\Qb}$, more precisely this is given by $\Pb \times_{\rm Ad} \Aut{A}$, since 
\[
	\cptEnd{}{A}{\Qb} = \Pb \times_{\rm Ad} \cptEnd{}{A}{H} = (\Pb \times_{\rm Ad} \Aut{A}) \times_{\tau} A\ ,
\]
where $\tau$ is the canonical action of $\Aut{A}$ on $A$. The map $\Psi$ induces a trivialization of the latter bundle and therefore turns $\Pb$ into a principal $U(A)$-$U(A)$-bibundle. In comparison with the very similar construction for vector bundles we call $\Pb$ the \emph{frame bibundle} of $\Qb$.

On the other hand let $\Pb \to X$ be a principal $U(A)$-$U(A)$-bibundle and let $H$ be a Hilbert $A$-$A$-bimodule inducing a Morita self-equivalence of $A$. Since the group $U(A)$ acts canonically on $H$ from the left via $\varrho$, we can associate a twisted Morita bundle $\Qb = \Pb \times_{\varrho} H$ to $\Pb$ using the right action of $U(A)$ on $\Pb$. Then the trivialization of $\Pb \times_{\rm Ad} \Aut{A}$ yields a map $\Psi \colon A \times X \to \cptEnd{}{A}{\Qb}$, which is for each $x \in X$ an isomorphism of $C^*$-algebras. Isomorphic twisted Morita bundles necessarily have fibers, which are isomorphic \emph{as right Hilbert $A$-modules}, therefore  
\end{example}

\begin{lemma}\label{lem:IsoClasses}
Isomorphism classes of twisted Morita bundles $\Qb \to X$ over a connected space $X$ are in $1:1$-cor\-res\-pon\-dence with pairs $([\Pb], [H])$, where $\Pb$ is a principal $U(A)$-$U(A)$-bibundle, $[\Pb]$ denotes its isomorphism class, $H$ is a Morita self-equivalence of $A$ and $[H]$ denotes its isomorphism class as a right Hilbert $A$-module.
\end{lemma}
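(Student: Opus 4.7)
The plan is to check that the two constructions sketched in Example \ref{ex:FrameBundle} are well-defined on isomorphism classes and mutually inverse. In one direction, a twisted Morita bundle $\Qb \to X$ produces its frame bibundle $\Pb$ together with the isomorphism class of a typical fiber $H$ of $\Qb$, viewed as a right Hilbert $A$-module. In the other direction, a pair $([\Pb],[H])$ produces the associated bundle $\Qb = \Pb \times_{\varrho} H$, equipped with the trivialization $\Psi$ of $\cptEnd{}{A}{\Qb}$ coming from the trivialization of $\Pb \times_{\rm Ad}\Aut{A}$.

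First I would verify well-definedness of both assignments. An isomorphism $\Qb \to \Qb'$ of twisted Morita bundles is by definition an isometric isomorphism of right Hilbert $A$-module bundles intertwining the two maps $\Psi$, so it induces an isomorphism of the associated principal $U(A)$-bundles which is compatible with the trivializations of the associated $\Aut{A}$-bundles, hence with the bibundle structure. The typical fiber is well-defined up to right Hilbert $A$-module isomorphism because $X$ is connected: any two fibers of $\Qb$ are linked by a chain of local trivializations identifying them with the model $H$. For the associated-bundle construction, isomorphic pairs $([\Pb],[H])$ clearly produce isomorphic twisted Morita bundles, since an isomorphism $\Pb \to \Pb'$ of bibundles induces an isomorphism $\Pb \times_{\varrho} H \to \Pb' \times_{\varrho} H$ intertwining the induced trivializations.

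Next I would show that both compositions are the identity on isomorphism classes. Starting with $\Qb$, form $\Pb$ and fix a representative $H$ of the typical fiber. The evaluation map
\[
   \Pb \times_{\varrho} H \to \Qb \quad ; \quad [p,h] \mapsto p(h)
\]
is well-defined since, with the right $U(A)$-action on $\Pb$ given by pre-concatenation, $(pu)(h) = p(\varrho(u)(h))$, it is fiberwise an isometric isomorphism of right Hilbert $A$-modules, and it intertwines the two trivializations of $\cptEnd{}{A}{-}$ by the very definition of $\Psi$ through the associated $\Aut{A}$-bundle. Conversely, starting with $([\Pb],[H])$ and forming $\Qb = \Pb \times_{\varrho} H$, the frame bibundle of $\Qb$ is canonically isomorphic to $\Pb$: over $x \in X$, any $p \in \Pb_x$ defines a right $A$-linear isometric isomorphism $H \to \Qb_x$, $h \mapsto [p,h]$, and this identification intertwines both the right and the left $U(A)$-actions.

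The main obstacle I expect is bookkeeping the compatibility of the bibundle structures, i.e.\ checking that under the identification of the frame bibundle of $\Pb \times_{\varrho} H$ with $\Pb$ the left $U(A)$-action produced by the recipe of Example \ref{ex:FrameBundle} really coincides with the given left action on $\Pb$. This reduces to unwinding the relation $g \cdot p = p \cdot {}^{\Psi(p)}g$ and verifying that the trivialization of $\cptEnd{}{A}{\Qb}$ built from $\Pb \times_{\rm Ad} \Aut{A}$ reproduces the same $\Psi$ after transport through the frame-bundle construction; this is a direct but slightly tedious diagram chase, and it also shows that different choices of representative $H$ lead to canonically isomorphic outputs.
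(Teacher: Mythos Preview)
Your proposal is correct and follows exactly the approach the paper takes: the lemma is stated immediately after Example~\ref{ex:FrameBundle} with no separate proof, the example having already described the frame-bibundle and associated-bundle constructions as mutual inverses, and the sentence ``Isomorphic twisted Morita bundles necessarily have fibers, which are isomorphic \emph{as right Hilbert $A$-modules}'' supplying the well-definedness of the $[H]$-component. You have simply spelled out the verifications the paper leaves implicit.
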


Let $\Pb$ and $\Pb'$ be two principal $U(A)$-$U(A)$-bibundles over $X$. As explained in \cite{paper:JurcoNonAbelian}, their product $\Pb \cdot \Pb'$ is defined to be the quotient of $\Pb \times \Pb'$ with respect to the right action $(p, p') \mapsto (pu, u^{-1}p')$ for $u \in U(A)$. Of course the two remaining actions turn $\Pb \cdot \Pb'$ into a principal bibundle again. Let $\Qb = \Pb \times_{\varrho} A$ and $\Qb' = \Pb' \times_{\varrho} A$, then we have

\begin{lemma}\label{lem:twistedProduct}
The twisted tensor product $\Qb \otimes_A \Qb'$ is isometrically isomorphic to $(\Pb \cdot \Pb') \times_{\varrho} A$, i.e.\ $([\Pb], [A]) \otimes_A ([\Pb'], [A]) = ([\Pb \cdot \Pb'], [A])$ under the correspondence of lemma \ref{lem:IsoClasses}.
\end{lemma}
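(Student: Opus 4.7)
The plan is to construct an explicit bundle isomorphism
\[
	\Phi \colon \Qb \otimes_A \Qb' \longrightarrow (\Pb \cdot \Pb') \times_\varrho A
\]
and verify that it preserves all the required structure. Using the trivialization $\Psi' \colon \Pb' \to \Aut{A}$ of the $\Aut{A}$-bundle associated to the principal $U(A)$-$U(A)$-bibundle $\Pb'$, I would define $\Phi$ on elementary tensors by
\[
	\Phi\bigl([p,h] \otimes [p',h']\bigr) \;=\; \bigl[[p,p'],\, \Psi'(p')(h)\cdot h'\bigr]\ .
\]

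The bulk of the work is checking well-definedness, in three steps. First, replacing $[p,h]$ by $[pu, u^{-1}h]$ for $u \in U(A)$: in $\Pb \cdot \Pb'$ one computes $[pu, p'] = [p, u\cdot p'] = [p, p' \cdot \Psi'(p')(u)]$ using the left $U(A)$-action on $\Pb'$ from the remark preceding Example \ref{ex:FrameBundle}, and the associated-bundle relation $([p,p']\cdot v, a) \sim ([p,p'], v a)$ absorbs the factor $\Psi'(p')(u)$ against the $\Psi'(p')(u^{-1})$ that arises from applying $\Psi'(p')$ to $u^{-1}h$. Second, replacing $[p', h']$ by $[p'u, u^{-1}h']$: this uses the contravariant equivariance $\Psi'(p'u) = {\rm Ad}_{u^{-1}} \circ \Psi'(p')$ inherent to $\Psi'$ being a section of the associated $\Aut{A}$-bundle, together with $[p, p' u] = [p,p'] \cdot u$. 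Third, middle $A$-linearity $[p,h]\cdot a \otimes [p',h'] = [p,h] \otimes a \cdot [p',h']$: the left $A$-action on $\Qb' = \Pb' \times_\varrho A$ is $a \cdot [p', h'] = [p', \Psi'(p')(a)\cdot h']$ (coming from the very trivialization $\Psi'$ of the $\cptEnd{}{A}{\Qb'}$-bundle described in Example \ref{ex:FrameBundle}), so both sides evaluate to $[[p,p'], \Psi'(p')(h)\,\Psi'(p')(a)\, h']$.

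Once $\Phi$ is well-defined, I check fiberwise that it is an isomorphism of twisted $A$-$A$-Morita bundles. Picking representatives $p_0 \in \Pb_x$ and $p_0' \in \Pb'_x$ identifies $\Qb_x$ with $\twA{\Psi(p_0)}$ and $\Qb'_x$ with $\twA{\Psi'(p_0')}$, and Example \ref{ex:autTwist} supplies the canonical Morita-equivalence isomorphism $\twA{\Psi(p_0)} \otimes_A \twA{\Psi'(p_0')} \simeq \twA{\Psi'(p_0') \circ \Psi(p_0)}$. On the target, the computation $u \cdot [p_0, p_0'] = [p_0, p_0'] \cdot (\Psi'(p_0') \circ \Psi(p_0))(u)$ from the remark shows that the trivialization of $\Pb \cdot \Pb'$ at $[p_0, p_0']$ is $\Psi'(p_0') \circ \Psi(p_0)$, so that $(\Pb \cdot \Pb')\times_\varrho A$ at $x$ is exactly $\twA{\Psi'(p_0')\circ \Psi(p_0)}$. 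Under these identifications $\Phi$ becomes precisely the isomorphism from Example \ref{ex:autTwist}; in particular it preserves both $A$-actions and both $A$-valued inner products, so it is an isometric isomorphism of Morita bundles. Continuity and local triviality of $\Phi$ follow from the construction of the topology on $\Qb \otimes_A \Qb'$ in the twisted-tensor-product subsection, since on a local trivializing chart for $\Pb$ and $\Pb'$ the formula for $\Phi$ reduces to the pointwise version of this standard isomorphism.

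The main obstacle is not conceptual but notational: one has to keep three layers of conventions straight at once --- the composition order in $\Aut{A}$, the contravariant equivariance $\Psi(pu) = {\rm Ad}_{u^{-1}} \circ \Psi(p)$ (rather than the naive covariant form), and the left-versus-right twisting distinction from Example \ref{ex:autTwist}. Once these are fixed, each verification reduces to a one-line substitution, and the isomorphism-class statement $([\Pb],[A]) \otimes_A ([\Pb'],[A]) = ([\Pb \cdot \Pb'],[A])$ in Lemma \ref{lem:IsoClasses} drops out immediately.
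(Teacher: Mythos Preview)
Your proposal is correct and follows essentially the same route as the paper: the same explicit map $[p,h]\otimes[p',h']\mapsto\bigl[[p,p'],\Psi'(p')(h)h'\bigr]$, with the same three well-definedness checks. The only substantive difference is in the endgame: the paper verifies the right $A$-valued inner product is preserved by a direct computation, then establishes surjectivity via an approximate-unit argument and the open mapping theorem, and finally checks the $\cptEnd{}{A}{\cdot}$-trivialization diagram by hand; you instead identify the fibre map with the standard isomorphism $\twA{\sigma}\otimes_A\twA{\tau}\simeq\twA{\tau\circ\sigma}$ from Example~\ref{ex:autTwist} (strictly speaking that example states the $\Atw{\sigma}$ variant, so you are implicitly using the evident left-twisted analogue), which packages isometry, bijectivity, and compatibility with both $A$-actions in one stroke. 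Your shortcut is cleaner and avoids the approximate-unit step, at the cost of being slightly less self-contained; either way the argument goes through.
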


\begin{proof}\label{pf:twistedProduct}
We will denote elements in $\Qb$ by $[p,v]$ for $p \in \Pb$ and $v \in A$, likewise for $\Qb'$. Since $\Pb'$ is a principal bibundle, there is a map $\Psi \colon \Pb' \to \Aut{A}$, which is equivariant in the sense that for $p' \in \Pb'$ and $u \in U(A)$ we have $\Psi(p'u) = {\rm Ad}_{u^*} \circ \Psi(p')$. Likewise, we have $\Phi \colon \Pb \to \Aut{A}$. The left action of $A$ on $\Qb'$ is given by $a \cdot [p', v'] = [p', \Psi(p')(a)v']$. Let
\[
	\Theta \colon \Qb \otimes_A \Qb' \to (\Pb \cdot \Pb') \times_{\varrho} A \quad ; \quad [p,v] \otimes [p',v'] \mapsto [(p,p'), \Psi(p')(v)v'] \ .
\]
To check that this is well-defined, let $u \in U(A), a \in A$ and observe
\begin{align*}
	\Theta([pu,v] \otimes [p',v']) & = [(pu,p'), \Psi(p')(v)v'] = [(p,p' \cdot \Psi(p')(u)), \Psi(p')(v)v'] \\
	& = [(p,p'), \Psi(p')(uv)v'] = \Theta([p,uv] \otimes [p',v']) \\
	\Theta([p,v] \otimes [p'u,v']) & = [(p,p'u), \Psi(p'u)(v)v'] = [(p,p'u), u^*\Psi(p')(v)uv'] \\ 
	& = [(p,p'), \Psi(p')(v)uv'] = \Theta([p,v] \otimes [p',uv']) \\
	\Theta([p,v] \cdot a \otimes [p',v']) & = [(p,p'), \Psi(p')(va)v'] = [(p,p'), \Psi(p')(v) \Psi(p')(a) v'] \\ 
	& = \Theta([p,v] \otimes a \cdot [p',v']) 
\end{align*}
Thus, $\Theta$ is well-defined on the algebraic tensor product in each fiber. But it also preserves the right $A$-valued inner product due to the following calculation
\begin{align*}
	\scalR{[p,v_1] \otimes [p',v_2]}{[p,w_1] \otimes [p',w_2]}{A} & = \scalR{v_2}{\Psi(p')(\scalR{v_1}{w_1}{A})\,w_2}{A} \\
& = \scalR{\Psi(p')(v_1)\,v_2}{\Psi(p')(w_1)\,w_2}{A} \\
& = \scalR{\Theta([p,v_1] \otimes [p',v_2])}{\Theta([p,w_1] \otimes [p',w_2])}{A}
\end{align*}
and therefore extends to a (partial) isometry on the fibers. By the open mapping theorem it suffices to see that $\Theta$ is surjective. Let $e_n$ be an approximate unit for $A$ and let $[(p,p'),w] \in (\Pb \cdot \Pb') \times_{\varrho} A$. Let $a_n = [p,e_n] \otimes [p',w]$. Since
\[
	\lVert [p,e_n] \otimes [p',w] - [p,e_m] \otimes [p',w] \rVert = \lVert e_n\,\Psi(p')^{-1}(w) - e_m\,\Psi(p')^{-1}(w) \rVert\ ,
\]
this is a Cauchy sequence in the fiber over the point onto which $p$ and $p'$ project. Since the fibers are complete, it converges. Note that 
\[
	\Theta(a_n) = [(p,p'), \Psi(p')(e_n)w] = [(p,p'), \Psi(p')(e_n\, \Psi(p')^{-1}(w))]
\]
converges to $[(p,p'),w]$. Since $\Theta$ is continuous, we are almost done. At last, we need to check that $\Theta$ is an isomorphism of twisted Morita bundles, i.e.\ that the following diagram commutes
\[
  \begin{xy}
   \xymatrix{
	X \times A \ar[rr]^{\!\!\!\alpha} \ar@{=}[d] & & \cptEnd{}{A}{\Qb \otimes \Qb'} \ar[d]^{\beta}\\
	X \times A \ar[rr]^{\!\!\!\gamma} & & \cptEnd{}{A}{(\Pb \cdot \Pb') \times_{\varrho} A}
   }
  \end{xy}
\]
Since $\alpha$ sends $(x,a)$ to the operator $[p,v] \otimes [p',v'] \mapsto [p, \Phi(a)v] \otimes [p',v']$, the composition $\beta \circ \alpha$ sends $(x,a)$ to the compact operator, which maps $[(p,p'),w]$ to $[(p,p'), \Psi(p') \circ \Phi(p)(a)w]$. This is precisely the left multiplication by $a$ on the fiber of $(\Pb \cdot \Pb') \times_{\varrho} A$, which finishes the proof.
\end{proof}

\begin{definition}\label{def:nonabBG}
A nonabelian bundle gerbe in the sense of \cite{paper:JurcoNonAbelian, paper:JurcoCrossed} for the crossed module $U(A) \to \Aut{A}$ is a principal $U(A)$-bibundle $\Pb \to Y^{[2]}$ together with a bibundle isomorphism $\pi_{12}^* \Pb \cdot \pi_{23}^* \Pb \to \pi_{13}^* \Pb$, which satisfies the bibundle analogue of associativity as in definition \ref{def:MoritaBG}, i.e.\ the following diagram commutes:
\[
  \begin{xy}
   \xymatrix{
	\left(\pi_{12}^* \Pb \cdot \pi_{23}^*\Pb\right) \cdot \pi_{34}^*\Pb \ar@{=}[rr] \ar[d] & & \pi_{12}^* \Pb \cdot \left(\pi_{23}^*\Pb \cdot \pi_{34}^*\Pb\right) \ar[d] \\
	\pi_{13}^* \Pb \cdot \pi_{34}^*\Pb \ar[dr] & & \pi_{12}^* \Pb \cdot \pi_{24}^*\Pb \ar[dl]\\
	& \pi_{14}^* \Pb & 
   }
  \end{xy}
\]	
\end{definition}

The construction of the frame bibundle is functorial in the sense that an isomorphism of twisted Morita bundles $\varphi \colon \Qb \to \Qb'$ induces a corresponding isomorphism of the frame bibundles. More precisely, $\varphi$ yields an isomorphism $H \to H'$ of the corresponding typical fibers and maps an isometry $H \to \Qb_x$ to the isometry $H' \to H \to \Qb_x \to \Qb_{\varphi(x)}$. Since $\varphi$ intertwines the two trivializations, the induced map intertwines the two left multiplications on the frame bibundle. This works vice versa \emph{if} the fibers are isomorphic to $A$ as right Hilbert $A$-modules. Thus, we have

\begin{corollary}\label{cor:nonabBunGer}
Isomorphism classes of nonabelian bundle gerbes $\Pb \to Y^{[2]}$ for the crossed module $U(A) \to \Aut{A}$ are in $1:1$-correspondence with Morita bundle gerbes $\Hb \to Y^{[2]}$ with typical fibers isomorphic to $A$ as right Hilbert $A$-modules via 
\[
	\Pb \mapsto \Pb \times_{\varrho} A\ . 
\] 	
\end{corollary}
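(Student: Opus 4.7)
The plan is to assemble the correspondence from the earlier results. First, I would apply Lemma \ref{lem:IsoClasses} (componentwise over $Y^{[2]}$) to the subclass of twisted $A$-$A$-Morita bundles whose typical fiber is isomorphic to $A$ as a right Hilbert $A$-module. Pinning down the second coordinate of the pair $([\Pb], [H])$ to $[A]$ reduces the classification to the isomorphism class of the frame bibundle. This yields a bijection on objects between such twisted Morita bundles $\Hb \to Y^{[2]}$ and principal $U(A)$-$U(A)$-bibundles $\Pb \to Y^{[2]}$, witnessed by the mutually inverse constructions $\Pb \mapsto \Pb \times_{\varrho} A$ and $\Hb \mapsto (\text{frame bibundle of }\Hb)$ from Example \ref{ex:FrameBundle}.

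Second, I would transport the multiplicative structure across this bijection. Lemma \ref{lem:twistedProduct} supplies a natural isomorphism
\[
	(\pi_{12}^*\Pb \cdot \pi_{23}^*\Pb) \times_{\varrho} A \ \simeq\ \pi_{12}^*\Hb \otimes_A \pi_{23}^*\Hb\ ,
\]
so that a Morita bundle gerbe multiplication $\mu \colon \pi_{12}^*\Hb \otimes_A \pi_{23}^*\Hb \to \pi_{13}^*\Hb$ corresponds, via functoriality of the frame bibundle construction (as noted in the paragraph preceding the corollary), to a bibundle isomorphism $\pi_{12}^*\Pb \cdot \pi_{23}^*\Pb \to \pi_{13}^*\Pb$, and conversely. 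Chasing these natural identifications through the associativity diagram over $Y^{[3]}$ then shows that the associativity condition of Definition \ref{def:MoritaBG} is equivalent to that of Definition \ref{def:nonabBG}. Isomorphisms of nonabelian bundle gerbes likewise correspond to isomorphisms of Morita bundle gerbes, since both notions by definition require intertwining the multiplication maps.

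The main delicate point, and the one I expect to take the most care, is verifying that the two constructions are genuinely inverse \emph{as functors} on isomorphism classes — i.e.\ that the left $U(A)$-action on the frame bibundle built from $\Pb \times_{\varrho} A$ recovers the original left action on $\Pb$, and vice versa. This rests on the observation that the trivialization $\Psi \colon X \times A \to \cptEnd{}{A}{\Hb}$ encodes exactly the left action data, as spelled out in Example \ref{ex:FrameBundle}. Once naturality is established at that level, all the remaining compatibilities become formal consequences of Lemmas \ref{lem:IsoClasses} and \ref{lem:twistedProduct}.
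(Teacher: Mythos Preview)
Your proposal is correct and follows essentially the same approach as the paper: the corollary is stated without a separate proof, relying on the paragraph immediately preceding it (functoriality of the frame bibundle construction and its inverse when the fiber is $A$) together with Lemmas \ref{lem:IsoClasses} and \ref{lem:twistedProduct}. Your write-up simply makes explicit the steps the paper leaves implicit, including the transport of the associativity diagram and the check that the two constructions are mutually inverse.
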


In case the algebra $A$ is Picard surjective, this yields a bijective correspondence between Morita bundle gerbes and nonabelian bundle gerbes. Apart from this, the situation is more complicated, since the cokernel of the homomorphism $\Aut{A} \to \Pic{A}$ becomes important. In fact, one can see the coset decomposition of $\Pic{A}$ with respect to ${\rm Out}(A)$ (the image of $\Aut{A}$ in $\Pic{A}$) in lemma \ref{lem:IsoClasses}. In case $A$ is a simple $C^*$-algebra carrying a unique trace~$\tau$ separating the equivalence classes of projections then the cokernel is the fundamental group
\[
	\mathcal{F}(A) = \left\{ (\text{tr} \otimes \tau)(p)\ |\ p \in M_n(A) = M_n(\C) \otimes A,\ p = p^2 = p^*, \ pM_n(A)p \simeq A \right\}
\]
as defined in \cite{paper:NawataWatatani}. In this case
\[
	1 \to ZU(A) \to U(A) \to \Aut{A} \to \Pic{A} \to \mathcal{F}(A) \to 1
\] 
is exact. So, the non-exactness stems from the self-similarity of $A$ in this case. Nevertheless, this information is captured by Morita equivalences and therefore by Morita bundle gerbes as well. 

\section{Twisted Operator K-Theory} 
\label{sec:twisted_operator_k_theory}
Modules over bundle gerbes were used in \cite{paper:KTheoryBGM} to give a description of twisted $K$-theory (with torsion twist) in terms of objects as closely related to vector bundles as possible. In this chapter we are going to extend the results from \cite{paper:KTheoryBGM} to the case of Hilbert $A$-module bundles with twists given by Morita bundle gerbes introduced above. Even though the pointed set $\Tw{M}{A}$ is not a group anymore, most of the main results transfer to this more general setup. 

\begin{definition}\label{def:MoritaModule}
Let $A$ be a unital $C^*$-algebra, $M$ be a manifold, $\pi \colon Y \to M$ be a fibration and let $\Hb \to Y^{[2]}$ be a Morita bundle gerbe for $A$ over $Y^{[2]}$. A \emph{twisted Morita module} $\Eb \to Y$ for $\Hb$ is a locally trivial bundle of right Hilbert $A$-modules $\Eb$ with typical fiber $H$ and structure group $\U{}{A}{H}$ together with an isometric isomorphism of right Hilbert $A$-module bundles
\[
	\gamma \colon \pi_1^*\Eb \otimes_A \Hb \to \pi_2^*\Eb\ ,
\]
such that the following associativity diagram commutes
\[
  \begin{xy}
   \xymatrix{
	\left(\pi_1^*\Eb \otimes_A \pi_{12}^*\Hb\right) \otimes_A \pi_{23}^*\Hb \ar@{=}[rr] \ar[d]^{\gamma \otimes \id{}} & & \pi_1^* \Eb \otimes_A \left(\pi_{12}^*\Hb \otimes_A \pi_{23}^*\Hb\right) \ar[d]^{\id{} \otimes \mu} \\
	\pi_2^* \Eb \otimes_A \pi_{23}^*\Hb \ar[dr]^{\gamma} & & \pi_1^* \Eb \otimes_A \pi_{13}^*\Hb \ar[dl]^{\gamma}\\
	& \pi_{3}^* \Eb & 
   }
  \end{xy}
\]
A \emph{(compact) morphism} of twisted Morita modules $\Eb \to Y$ and $\Eb' \to Y$ for the same Morita bundle gerbe $\Hb$ is a fiberwise bounded $A$-linear adjointable (compact) homomorphism of right Hilbert $A$-module bundles $\varphi \colon \Eb \to \Eb'$, such that the following diagram commutes
\[
  \begin{xy}
   \xymatrix{
	\pi_1^*\Eb \otimes_A \Hb \ar[rr]^{\ \ \gamma} \ar[d]^{\pi_1^*\varphi \otimes \id{\Hb}} & & \pi_2^* \Eb \ar[d]^{\pi_2^*\varphi} \\
	\pi_1^*\Eb' \otimes_A \Hb \ar[rr]^{\ \ \gamma'} & & \pi_2^* \Eb'
   }
  \end{xy}
\]
A twisted Morita bundle $\Eb \to Y$ will be called \emph{finitely generated} if the fibers are finitely generated as right Hilbert $A$-modules and likewise for \emph{projective}. The bundle of compact adjointable $A$-linear operators $\Eb_y \to \Eb'_y$ will be denoted by $\cpthom{}{A}{\Eb}{\Eb'}$ with $\cptEnd{}{A}{\Eb} = \cpthom{}{A}{\Eb}{\Eb}$. Since we are dealing mostly with finitely generated projective Hilbert $A$-modules, for which the compact and the bounded adjointable operators agree, we only consider compact operators in the following.  
\end{definition}

\begin{corollary}\label{cor:cptDescent}
Let $\Eb \to Y$ be a twisted Morita module for $\Hb$. There is a a bundle isomorphism
\[
	\phi \colon \pi_2^*(\cptEnd{}{A}{\Eb}) \to \pi_1^*(\cptEnd{}{A}{\Eb})\ ,
\]
which satisfies the condition of lemma \ref{lem:descentlemma}, such that the global sections of the induced bundle $\mmm{}{A}{\Eb}$ over $M$ correspond to the compact Morita module endomorphisms of $\Eb$.
\end{corollary}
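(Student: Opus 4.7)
The plan is to manufacture $\phi$ directly from the Morita module structure map $\gamma \colon \pi_1^*\Eb \otimes_A \Hb \to \pi_2^*\Eb$, and then feed it into lemma \ref{lem:descentlemma}. Fix $(y_1,y_2) \in Y^{[2]}$ and $S \in \cptEnd{}{A}{\Eb_{y_2}}$. Conjugating by $\gamma_{(y_1,y_2)}$ produces $\gamma_{(y_1,y_2)}^{-1} \circ S \circ \gamma_{(y_1,y_2)} \in \cptEnd{}{A}{\Eb_{y_1} \otimes_A \Hb_{(y_1,y_2)}}$. Because $\Hb_{(y_1,y_2)}$ is an $A$-$A$-Morita equivalence, the identification (\ref{eqn:cpts}) (applied fiberwise) says that the compact right $A$-linear endomorphisms of $\Eb_{y_1} \otimes_A \Hb_{(y_1,y_2)}$ are canonically $\cptEnd{}{A}{\Eb_{y_1}}$ via $T \mapsto T \otimes \id{\Hb_{(y_1,y_2)}}$. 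So there is a unique $T \in \cptEnd{}{A}{\Eb_{y_1}}$ with $T \otimes \id{\Hb_{(y_1,y_2)}} = \gamma_{(y_1,y_2)}^{-1} \circ S \circ \gamma_{(y_1,y_2)}$, and we set $\phi(S) = T$. Continuity in $(y_1,y_2)$ follows from the fact that $\gamma$ is a continuous bundle isomorphism and that the identification (\ref{eqn:cpts}) holds at the level of bundles (which was discussed right after the definition of the twisted tensor product).

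Next I would verify the cocycle condition of lemma \ref{lem:descentlemma} over $Y^{[3]}$, namely $\pi_{12}^*\phi \circ \pi_{23}^*\phi = \pi_{13}^*\phi$. Unpacking the definitions at a point $(y_1,y_2,y_3)$: given $R \in \cptEnd{}{A}{\Eb_{y_3}}$, writing $R_1 = \pi_{13}^*\phi(R)$ and $R_1' = \pi_{12}^*\phi(\pi_{23}^*\phi(R))$, both $R_1 \otimes \id{\pi_{13}^*\Hb}$ and $R_1' \otimes \id{\pi_{12}^*\Hb} \otimes \id{\pi_{23}^*\Hb}$ realise the same conjugation of $R$ back to an endomorphism of $\pi_1^*\Eb$ tensored with a Morita bundle, but computed along the two sides of the associativity pentagon for $\gamma$. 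The commuting pentagon in definition \ref{def:MoritaModule}, combined with the compatibility of $\mu$ with the identification (\ref{eqn:cpts}), forces $R_1 = R_1'$. This is the step that uses the full strength of $\Hb$ being a Morita bundle gerbe; it is also, I expect, the only place where one has to be careful tracking variables, though it is essentially bookkeeping once $\phi$ is set up.

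With the cocycle condition in hand, lemma \ref{lem:descentlemma} yields a locally trivial bundle of algebras $\mmm{}{A}{\Eb} \to M$ together with an isomorphism $\pi^*\mmm{}{A}{\Eb} \simeq \cptEnd{}{A}{\Eb}$ intertwining the canonical identifications on $Y^{[2]}$ with $\phi$. (The functoriality clause of lemma \ref{lem:descentlemma} upgrades this from a bundle of Banach spaces to a bundle of $C^*$-algebras: fiberwise multiplication and involution on $\cptEnd{}{A}{\Eb}$ commute with $\phi$ because $\phi$ is defined by conjugation and a Morita identification.) Finally, a global section of $\mmm{}{A}{\Eb}$ pulls back to a section $\Phi \in \Gamma(Y, \cptEnd{}{A}{\Eb})$, i.e.\ a compact $A$-linear endomorphism of $\Eb \to Y$. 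The defining property that the pullback is intertwined by $\phi$ translates, upon unraveling the construction of $\phi$, into $(\pi_1^*\Phi) \otimes \id{\Hb} = \gamma^{-1} \circ (\pi_2^*\Phi) \circ \gamma$, which is exactly the commuting square in definition \ref{def:MoritaModule} characterising a compact morphism of twisted Morita modules $\Eb \to \Eb$. Conversely any compact Morita module endomorphism of $\Eb$ is $\phi$-equivariant by the same square and hence descends to a section of $\mmm{}{A}{\Eb}$, giving the claimed bijection.
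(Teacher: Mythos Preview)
Your proposal is correct and follows essentially the same route as the paper: both define $\phi$ by conjugating with $\gamma$ and using the Morita identification $T \mapsto T \otimes \id{\Hb}$ (the paper writes down $\phi^{-1}(T) = \gamma \circ (T \otimes \id{\Hb}) \circ \gamma^{-1}$, which is exactly your construction read in the other direction), verify the cocycle condition from the associativity diagram for $\gamma$, and then read off the section correspondence from lemma \ref{lem:descentlemma}. Your write-up is in fact a bit more explicit than the paper's about invoking (\ref{eqn:cpts}) and about why the algebra structure descends, but the argument is the same.
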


\begin{proof}\label{pf:cptDescent}
Let $(y_1, y_2) \in Y^{[2]}$ and $T \colon \Eb_{y_1} \to \Eb_{y_1}$ be a compact adjointable operator, then we define $\phi^{-1}(T) = \gamma \circ (\pi_1^* T \otimes \id{\Hb}) \circ \gamma^{-1}$, i.e.\ 
\[
	\phi^{-1}(T) \colon \Eb_{y_2} \to \Eb_{y_1} \otimes_A \Hb_{(y_1,y_2)} \overset{T \otimes \id{\Hb}}{\longrightarrow} \Eb_{y_1} \otimes_A \Hb_{(y_1,y_2)} \to \Eb_{y_2}
\]
on the fibers. This satisfies the associativity condition due to the following commutative diagram:
\[
  \begin{xy}
   \xymatrix{
	\Eb_{y_3} \ar[r] \ar@{=}[d]& \Eb_{y_2} \otimes_A \Hb_{(y_2,y_3)} \ar[r] & \Eb_{y_1} \otimes_A \Hb_{(y_1,y_2)} \otimes_A \Hb_{(y_2,y_3)} \ar[r]^{T \otimes \id{\ }}  \ar[d]^{\id{\Eb_{y_1}} \otimes \mu} & \Eb_{y_1} \otimes_A \Hb_{(y_1,y_2)} \otimes_A \Hb_{(y_2,y_3)} \ar[d]^{\id{\Eb_{y_1}} \otimes \mu}\\
	\Eb_{y_3} \ar[rr] & & \Eb_{y_1} \otimes \Hb_{(y_1,y_3)} \ar[r]^{T \otimes \id{\ }}& \Eb_{y_1} \otimes \Hb_{(y_1,y_3)}
   }
  \end{xy}
\]
where the square on the left hand side commutes due to the associativity of the action $\gamma$. A global section $\tau \colon M \to \mmm{}{A}{\Eb}$ corresponds to a section $\hat{\tau} \colon Y \to \cptEnd{}{A}{\Eb}$, such that $\varphi(\hat{\tau}(y_2)) = \hat{\tau}(y_1)$, which translates into the condition for morphisms of twisted Morita modules.
\end{proof}

\begin{example}\label{ex:trivial}
Let $\Hb$ be the Morita bundle gerbe from example \ref{ex:groupaction} for the unital $C^*$-al\-ge\-bra~$A$, i.e.\ $\Hb = P^{[2]} \times A_{\varphi \circ g}$ for some principal $G$-bundle $P$ and a homomorphism $\varphi \colon G \to \Aut{A}$. The trivial bundle of free rank $n$ Hilbert $A$-modules 
\[
	\Eb = P \times A^n
\]
is a twisted Morita module via the action
\[
	\gamma \colon \pi_1^*\Eb \otimes_A \Hb \to \pi_2^*E \quad ; \quad (p_1, p_2, v \otimes a) \mapsto (p_1, p_2, \varphi(g(p_1,p_2))^{-1}(va))
\]
The bundle of Morita module endomorphisms turns out to be $\mmm{}{A}{\Eb} = P \times_{\varphi} M_n(A)$, where $\varphi$ acts on $M_n(A) = M_n(\C) \otimes A$ as $\id{M_n(\C)} \otimes \varphi$.
\end{example}

\begin{example}\label{ex:local}
To have a local model for Morita modules, we need the following example. Let $P \to M$ be a trivializable principal $G$-bundle with an equivariant map $\tau \colon P \to G$. Let $\varphi \colon G \to \Aut{A}$ be a homomorphism. Let $\Hb$ be the Morita bundle gerbe from example \ref{ex:groupaction} constructed from $P$ and $\varphi$. Let $H = t\,A^n$ be a finitely generated projective right Hilbert $A$-module, where $t \in M_n(A)$ is a projection and define
\[
	\Eb = P \times \rtw{H}{\varphi \circ \tau}\ .
\]
This is in fact a twisted Morita module with respect to the action
\[
	\gamma \colon \pi_1^*\Eb \otimes_A \Hb \to \pi_2^*\Eb \quad ; \quad (p_1,p_2,v \otimes a) \mapsto (p_1,p_2,v \cdot a)\ , 
\]
where the dot denotes the action \emph{in the fiber over $p_1$}. By lemma \ref{lem:loctriv} it is indeed a locally trivial bundle of right Hilbert $A$-modules, which is isomorphic to 
\[
	\Eb' = \left\{ (p,v) \in P \times A^n\ | \ \varphi(\tau(p))^{-1}(t)v = v \right\}
\]
as a twisted Morita module by lemma \ref{lem:emb}, where the action of $\Hb$ on $\Eb'$ is given just like in the previous example. Thus, $\Eb$ embeds into the trivial twisted Morita module. 
\end{example}

Let $\Eb' \to Y'$ be a twisted Morita module for $\Hb'$ and let $\bar{\Eb} = \pi_{Y'}^*\Eb' \otimes_A \Qb \otimes_A \Delta\Hb$. On the fibers we have the following isomorphisms of right Hilbert $A$-modules
\begin{align*}
	\Eb'_{y_2'} \otimes \Qb_{(y_2',y)} \otimes \Hb_{(y,y)} & \simeq \Eb'_{y_1'} \otimes \Hb'_{(y_1',y_2')} \otimes \Qb_{(y_2',y)} \otimes \Hb_{(y,y)} \\ 
	& \simeq \Eb'_{y_1'} \otimes  \Qb_{(y_1',y)} \otimes \Hb_{(y,y)} \otimes \Hb_{(y,y)} \\
	& \simeq \Eb'_{y_1'} \otimes  \Qb_{(y_1',y)} \otimes \Hb_{(y,y)} 
\end{align*}
Let $\pi'_i \colon Y'^{[2]} \times_M Y \to Y' \times_M Y$ be the canonical projections. The associativity conditions on $\Hb$ and $\Eb$ and the compatibility of (\ref{eqn:stableiso}) with the product ensure that the induced iso $\pi_2^{'*}\bar{\Eb} \to \pi_1^{'*} \bar{\Eb}$ satisfies the condition for lemma \ref{lem:descentlemma}. Therefore we get an induced bundle of right Hilbert $A$-modules $\Qb(\Eb') \to Y$. Furthermore $\Hb$ acts from the right on $\bar{\Eb}$ as follows: 
\begin{align*}
	\Eb'_{y'} \otimes \Qb_{(y',y_1)} \otimes \Hb_{(y_1,y_1)} \otimes \Hb_{(y_1,y_2)} & \simeq \Eb'_{y'} \otimes \Qb_{(y',y_1)} \otimes \Hb_{(y_1,y_2)} \otimes \Hb_{(y_2,y_2)} \\ 
	& \simeq \Eb'_{y'} \otimes \Hb'_{(y',y')} \otimes \Qb_{(y',y_2)} \otimes \Hb_{(y_2,y_2)} \\
	& \simeq \Eb'_{y'} \otimes \Qb_{(y',y_2)} \otimes \Hb_{(y_2,y_2)}  
\end{align*}
By the intertwining properties of $\Qb$ and the associativity of the multiplication and the action, we see that this action commutes with the isomorphism used to define the bundle $\Qb(\Eb')$. Thus, we end up with a twisted Morita module $\Qb(\Eb') \to Y$. The notation of this bundle is due to its compatibility with the transitivity of stable equivalence: We have $(\Qb \circ \Qb')(\Eb'') = \Qb(\Qb'(\Eb''))$, where we used the notation from the proof of corollary~\ref{cor:equivrel}. The construction of $\Qb(\Eb')$ is functorial with respect to morphisms of Morita bundle gerbes, since these are precisely the maps that respect the action of $\Hb'$. Moreover, $\Qb^*(\Qb(\Eb')) \simeq \Eb' \otimes \Delta\Hb' \simeq \Eb'$ and we have proven

\begin{theorem}\label{thm:transfer}
If $\Hb \to Y^{[2]}$ and $\Hb' \to (Y')^{[2]}$ are two Morita bundle gerbes, which are stably equivalent via $\Qb \to Y \times_M Y'$. Then the maps $\Eb \mapsto \Qb^*(\Eb)$ and $\Eb' \mapsto Q(\Eb')$ provide a natural equivalence between the category of Morita modules for $\Hb$ over $Y$ and the category of Morita modules for $\Hb'$ over $Y'$. This equivalence is also bijection when restricted to finitely generated projective modules.
\end{theorem}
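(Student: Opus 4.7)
The plan is to build on what has already been established in the paragraph immediately preceding the theorem. There, the construction $\Eb' \mapsto \Qb(\Eb')$ has been defined as a twisted Morita module for $\Hb$ via descent from $\bar{\Eb} = \pi_{Y'}^*\Eb' \otimes_A \Qb \otimes_A \Delta\Hb$, it has been verified to be a right action of $\Hb$, and the isomorphism $\Qb^*(\Qb(\Eb')) \simeq \Eb' \otimes_A \Delta\Hb' \simeq \Eb'$ has been noted. Thus the main remaining tasks are: (i) verifying that $\Eb \mapsto \Qb^*(\Eb)$ is defined symmetrically and yields a Morita module for $\Hb'$, (ii) checking that both composition isomorphisms are natural in their arguments, and (iii) confirming that the finitely generated projective property is preserved.

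For (i), I would simply observe that the stable equivalence $\Hb' \simeq \pi_1^*\Qb \otimes_A \Hb \otimes_A \pi_2^*\Qb^*$ is symmetric: conjugating yields $\Hb \simeq \pi_1^*\Qb^* \otimes_A \Hb' \otimes_A \pi_2^*\Qb$, so $\Qb^* \to Y' \times_M Y$ also provides a stable equivalence. The entire construction from the paragraph before the theorem then applies verbatim with the roles of $(\Hb,\Qb)$ and $(\Hb',\Qb^*)$ interchanged, producing $\Qb^*(\Eb) \to Y'$ as a twisted Morita module for $\Hb'$, together with the isomorphism $\Qb(\Qb^*(\Eb)) \simeq \Eb$. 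Functoriality in morphisms is already built in: a (compact) morphism $\varphi \colon \Eb' \to \Eb''$ intertwines the $\Hb'$-actions, hence $\pi_{Y'}^*\varphi \otimes \id{\Qb} \otimes \id{\Delta\Hb}$ commutes with the descent isomorphism on $\bar{\Eb}$ and descends to $\Qb(\varphi) \colon \Qb(\Eb') \to \Qb(\Eb'')$ by the functoriality clause in lemma \ref{lem:descentlemma}.

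For (ii), naturality of the quasi-inverse isomorphism is inherited from the naturality of each step in the chain
\[
\Eb'_{y'} \simeq \Qb^*_{(y,y')} \otimes_A \Qb_{(y,y')} \otimes_A \Eb'_{y'} \simeq \Qb^*_{(y,y')} \otimes_A \Bigl(\Qb(\Eb')\Bigr)_y \otimes_A (\Delta\Hb')_{y'},
\]
where the identifications $\Qb^* \otimes_A \Qb \simeq A$, the intertwining isomorphism that defines $\Qb$ as a stable equivalence, and the unitality isomorphism $\Delta\Hb' \simeq \trivial{A}$ (remark \ref{rem:fiberIsA}) are all natural in bimodule/Morita-module morphisms. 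Since $\varphi$ commutes with these intertwiners by construction, the resulting fiberwise isomorphism assembles to a natural transformation $\Qb^* \circ \Qb \Rightarrow \id{}$, and symmetrically for $\Qb \circ \Qb^*$.

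For (iii), the typical fibers of $\Qb$ and $\Qb^*$ are $A$-$A$-Morita equivalences, hence by remark \ref{rem:fingenproj} (using the unitality of $A$) they are finitely generated and projective as right Hilbert $A$-modules. Tensoring a finitely generated projective right Hilbert $A$-module fiberwise with such a module preserves finite generation and projectivity, as does tensoring with $\Delta\Hb \simeq \trivial{A}$. The main obstacle I anticipate is purely bookkeeping: keeping track of the nested associativity and intertwining diagrams for $\Qb$ (from remark \ref{rem:intertwiner}), $\gamma$, and $\mu$ while verifying that the fiberwise isomorphism exhibited above does respect the descent data used to define $\Qb^*(\Qb(\Eb'))$ in the first place. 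No new conceptual ingredient is needed beyond lemma \ref{lem:descentlemma} and the compatibility of stable equivalence with the associator of the tensor product of twisted Morita bundles.
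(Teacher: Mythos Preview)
Your proposal is correct and follows exactly the paper's approach: the paper in fact presents the entire argument in the paragraph preceding the theorem statement and then simply declares ``and we have proven'' the theorem, so the construction of $\Qb(\Eb')$ via descent, its functoriality, the compatibility $(\Qb \circ \Qb')(\Eb'') = \Qb(\Qb'(\Eb''))$, and the identification $\Qb^*(\Qb(\Eb')) \simeq \Eb' \otimes_A \Delta\Hb' \simeq \Eb'$ constitute the whole proof. Your points (i)--(iii) make explicit the symmetry, naturality, and preservation of finite generation/projectivity that the paper leaves to the reader; the only caveat is that in your displayed chain in (ii) the tensor factors appear in a different order than the paper's convention $\bar{\Eb} = \pi_{Y'}^*\Eb' \otimes_A \Qb \otimes_A \Delta\Hb$ would dictate, but as you yourself note this is bookkeeping rather than a conceptual issue.
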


The direct sum of the action maps turns the direct sum of two twisted Morita modules into another one. All operations so far are compatible with forming those direct sums, in particular the functor $\Qb$ is additive. 
\begin{definition}\label{def:twistedK}
Let $A$ be a unital $C^*$-algebra and $\Hb$ a Morita bundle gerbe for $A$. We define the twisted $K$-group $K^0_{\Hb}(M)$ with twist $\Hb$ to be the Grothendieck group of the isomorphism classes of twisted Morita modules. 
\end{definition}

The next corollary is a direct consequence of the previous theorem:
\begin{corollary}\label{cor:twistedKdepends}
The twisted $K$-group $K^0_{\Hb}(M)$ depends up to non-canonical isomorphism induced by stable equivalences only on the class of $\Hb$ in $\Tw{M}{A}$.
\end{corollary}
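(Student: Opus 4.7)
The plan is to deduce this directly from Theorem \ref{thm:transfer}, since that theorem already does the heavy lifting on the level of categories of twisted Morita modules. Given two representatives $\Hb \to Y^{[2]}$ and $\Hb' \to (Y')^{[2]}$ of the same class in $\Tw{M}{A}$, by definition there exists a twisted $A$-$A$-Morita bundle $\Qb \to Y \times_M Y'$ implementing a stable equivalence. I would first invoke Theorem~\ref{thm:transfer} to obtain the mutually inverse functors $\Eb \mapsto \Qb(\Eb)$ and $\Eb' \mapsto \Qb^*(\Eb')$ between Morita modules for $\Hb$ over $Y$ and Morita modules for $\Hb'$ over $Y'$, together with the fact that they restrict to a bijection on isomorphism classes of finitely generated projective twisted Morita modules.

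The second step is to observe that these functors are additive, i.e.\ they commute with direct sums, as was explicitly noted in the discussion preceding Definition~\ref{def:twistedK}. Concretely, $\Qb(\Eb_1 \oplus \Eb_2) \simeq \Qb(\Eb_1) \oplus \Qb(\Eb_2)$ follows from the distributivity of the twisted tensor product over direct sums and from the fact that the descent construction of lemma~\ref{lem:descentlemma} is functorial for bundle isomorphisms, so it respects direct sums of the input data. Since the operation passes to stable equivalence classes (see the remark that $(\Qb \circ \Qb')(\Eb'') = \Qb(\Qb'(\Eb''))$), it descends to a monoid isomorphism between the abelian monoids of isomorphism classes of finitely generated projective twisted Morita modules under $\oplus$.

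Passing to Grothendieck groups, this monoid isomorphism induces the desired group isomorphism $K^0_{\Hb}(M) \simeq K^0_{\Hb'}(M)$. Naturality of the Grothendieck construction together with the fact that $\Qb^*$ provides an inverse up to the natural isomorphism $\Qb^*(\Qb(\Eb)) \simeq \Eb \otimes_A \Delta\Hb \simeq \Eb$ noted in the proof of Theorem~\ref{thm:transfer} shows that the induced map on $K$-groups is indeed a group isomorphism. The only real subtlety, and the reason for the qualifier \emph{non-canonical}, is that the isomorphism depends on the choice of $\Qb$ representing the stable equivalence: different choices of $\Qb$ would in general produce different, albeit isomorphic, identifications. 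There is no obstacle to overcome here, since all of the non-trivial geometric content has already been absorbed into Theorem~\ref{thm:transfer}; the proof is essentially a one-line application of that theorem combined with the universal property of the Grothendieck group.
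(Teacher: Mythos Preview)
Your proposal is correct and matches the paper's approach exactly: the paper gives no explicit proof at all, simply stating that the corollary ``is a direct consequence of the previous theorem'' (Theorem~\ref{thm:transfer}), and your argument is precisely the natural unpacking of that sentence via additivity of the functor $\Qb(\,\cdot\,)$ and the universal property of the Grothendieck group. The only cosmetic slip is that in the paper's conventions $\Qb$ sends modules for $\Hb'$ to modules for $\Hb$ (and $\Qb^*$ goes the other way), so your labeling of which functor goes in which direction is reversed, but this has no bearing on the argument.
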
 

In order to give an operator algebraic description of $K^0_{\Hb}(M)$ for the case of matrix stable twists $[\Hb] \in \Tw{M}{A}$, we first need to prove that every finitely generated projective Morita module has a stable inverse.

\begin{lemma}\label{lem:embTrivial}
Let $\Hb$ be the bundle gerbe from example \ref{ex:groupaction}. Let $\Eb \to P$ be a finitely generated and projective twisted Morita module for the Morita bundle gerbe $\Hb$. Then there exists an $N \in \N$ and a finitely generated projective twisted Morita module $\Fb \to P$, such that 
\[
	\Eb \oplus \Fb \ \simeq \ \trivial{A^N}\ ,
\] 
where the right hand side denotes the trivial Morita module as explained in example~\ref{ex:trivial}.
\end{lemma}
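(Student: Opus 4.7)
The plan is to embed $\Eb$ locally as an orthogonal Morita-module summand of a trivial Morita module using example \ref{ex:local}, then glue the local embeddings via a partition of unity on the compact base $M$; the orthogonal complement of the resulting global embedding will serve as $\Fb$.

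For the local step, I would fix $m_0 \in M$ and choose a neighbourhood $U \ni m_0$ admitting a local section $\sigma \colon U \to P$ of $\pi$, so that the equivariant map $\tau \colon P|_U \to G$ defined by $\sigma(\pi(p)) \cdot \tau(p) = p$ trivializes $P|_U$. After shrinking $U$, I may trivialize the pullback $\sigma^*\Eb$ as a bundle of right Hilbert $A$-modules with constant fibre $H \simeq tA^n$ for some projection $t \in M_n(A)$, invoking theorem \ref{thm:AfinRank}. Example \ref{ex:groupaction} identifies $\Hb_{(\sigma(\pi(p)),p)}$ with $A_{\varphi(\tau(p))}$, so the action $\gamma$ reconstructs $\Eb_p \simeq H \otimes_A A_{\varphi(\tau(p))} = H_{\varphi(\tau(p))}$, presenting $\Eb|_{P|_U}$ in precisely the form $P|_U \times H_{\varphi\circ\tau}$ of example \ref{ex:local}, where it is exhibited as an orthogonal Morita-submodule of $\trivial{A^n}|_{P|_U}$ cut out fibrewise by the projection $\varphi(\tau(p))^{-1}(t)$.

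By compactness of $M$ I then cover it by finitely many such neighbourhoods $U_1,\dots,U_k$, producing isometric Morita-module embeddings $\Psi_i \colon \Eb|_{P|_{U_i}} \hookrightarrow \trivial{A^{n_i}}|_{P|_{U_i}}$ whose images are orthogonal summands. Fixing a partition of unity $\{\rho_i\}$ on $M$ subordinate to $\{U_i\}$ and setting $N = \sum_i n_i$, I define
\[
\Psi \colon \Eb \to \trivial{A^N} = \bigoplus_{i=1}^k \trivial{A^{n_i}}\,, \qquad v \mapsto \bigl(\sqrt{\rho_i(\pi(v))}\,\Psi_i(v)\bigr)_{i=1}^k\,,
\]
each summand being extended by zero off the support of $\rho_i$. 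Because $\sqrt{\rho_i\circ\pi}$ is constant on fibres of $\pi \colon P \to M$, it commutes with the action $\gamma$, so $\Psi$ is a morphism of twisted Morita modules. Since each $\Psi_i$ is a fibrewise isometric embedding, a short calculation yields $\Psi^*\Psi = \sum_i \rho_i \cdot \id{\Eb} = \id{\Eb}$, making $\Psi$ an isometric Morita-module embedding. I then set $\Fb = (1 - \Psi\Psi^*)(\trivial{A^N})$; because $\Psi^*$ is also a Morita-module morphism, $1 - \Psi\Psi^*$ is a Morita-module endomorphism of $\trivial{A^N}$, so $\Fb$ inherits a twisted Morita module structure, and its fibres are finitely generated projective as orthogonal complements in $A^N$.

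The step I expect to require the most care is the local reduction: verifying that a trivialization of $\sigma^*\Eb$ combined with $\gamma$ really reconstructs $\Eb|_{P|_U}$ in the form of example \ref{ex:local}. This rests on the compatibility of $\gamma$ with the bundle gerbe multiplication $\mu$ over $Y^{[3]}$ and on the natural identification $\Hb_{(\sigma(m),p)} \simeq A_{\varphi(\tau(p))}$ coming from example \ref{ex:groupaction}; the gluing and complement steps are then a routine adaptation of the classical argument for embedding a finitely generated projective bundle as an orthogonal summand of a trivial one.
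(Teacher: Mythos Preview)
Your proposal is correct and follows essentially the same route as the paper: local identification of $\Eb|_{P|_{U_i}}$ with the model of example~\ref{ex:local} via $\gamma$ and a trivialization of $\sigma_i^*\Eb$, then a partition-of-unity gluing into $\trivial{A^N}$ and taking the orthogonal complement as $\Fb$. The only point the paper handles slightly more explicitly is the local triviality of $\Fb$, which it obtains by noting that $\Psi\Psi^*$ is a norm-continuous projection-valued section (so nearby fibres are unitarily equivalent, as in lemma~\ref{lem:loctriv}); you may wish to add a word to that effect.
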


\begin{proof}\label{pf:embTrivial} 
Without loss of generality we may assume $M$ to be connected. Choose a good finite open cover $\bigcup_{i \in I}U_i \supset M$ and define $P_i = \left.P\right|_{U_i}$,  such that there exist local sections $\sigma_i \colon U_i \to P$ for the projection $\pi \colon P \to M$ and equivariant maps $\tau_i \colon P_i \to G$ with $\tau_i \circ \sigma_i = 1$. Let $\Eb_i = \sigma_i^*\Eb$ and choose an isomorphism $\varphi_i \colon \Eb_i \to U_i \times H$ with $H = tA^n$ for some projection $t \in M_n(A)$. Let $\trivial{\rtw{H}{\varphi \circ \tau_i}}$ be the Morita module from example \ref{ex:local} and define
\[
	\Delta_i \colon P_i \to P_i^{[2]}\quad ; \quad p \mapsto (\sigma_i(\pi(p)),p)\ .
\]
Let $\gamma \colon \pi_1^*\Eb \otimes_A \Hb \to \pi_2^*\Eb$ and $\delta \colon \pi_1^*\trivial{\rtw{H}{\varphi \circ \tau_i}} \otimes_A \Hb \to \pi_2^*\trivial{\rtw{H}{\varphi \circ \tau_i}}$ be the action maps. These induce an isometric isomorphism of right Hilbert $A$-module bundles 
\[
	\psi_i \colon \left.\Eb\right|_{P_i} \overset{\gamma^{-1}}{\longrightarrow} \pi^*\Eb_i \otimes_A \Delta_i^*\Hb \overset{\varphi_i \otimes \id{\Hb}}{\longrightarrow} \trivial{H} \otimes_A \Delta_i^*\Hb \overset{\delta}{\longrightarrow} \trivial{\rtw{H}{\varphi \circ \tau_i}}\ ,
\]
where the third map uses the identification of $\trivial{\rtw{H}{\varphi \circ \tau_i}}$ with $\trivial{H}$ over the image of $\sigma_i$. Using the associativity of $\Hb$, it is easy to check that this is an isomorphism of twisted Morita modules.

Now let $h_i^2 \colon M \to [0,1]$ be a partition of unity subordinate to the cover $U_i$. Let $k = |I|$. $N = n \cdot k$ and set
\[
	\Psi \colon \Eb \to \trivial{A^N} \quad ; \quad v \mapsto \sum_{i \in I} h_i(\pi(p)) \cdot \psi_i(v)
\]
for $v \in \Eb_p$. Where we suppressed the embedding of $\trivial{\rtw{H}{\varphi \circ \tau_i}}$ into the trivial bundle in the notation and $\psi_i$ is understood as an embedding into the $i$th summand of $\trivial{A^N} = \bigoplus_{i} \trivial{A^n}$. Now define
\[
	\Fb = \left\{ (p,v) \in \trivial{A^N}\ \mid \ \scalR{v}{\Psi(w)}{A} = 0\ \forall w \in E_p \right\}\ .
\]
It is a priori not clear that this defines a locally trivial bundle of right Hilbert $A$-modules over $P$ and that we have $\Eb \oplus \Fb = \trivial{A^N}$, but we can use the same machinery as in \cite[Theorem~2.14]{paper:SchickL2Index} to show that this is indeed the case. Since $\Psi$ is an isometric embedding of twisted Morita modules and intertwines the two actions, we see that $\delta \colon \pi_1^*\trivial{A^N} \otimes_A \Hb \to \pi_2^*\trivial{A^N}$ restricts to an isometric isomorphism of $\Fb$, therefore the latter is a twisted Morita module and the isometric isomorphism $\Eb \oplus \Fb \to \trivial{A^N}$ given by $\Psi$ on the first summand and by the embedding on the second is a morphism of twisted Morita modules.
\end{proof}

\begin{theorem}\label{thm:Yeah}
Let $\Hb$ be a representative of a matrix stable element $[\Hb] \in \Tw{M}{A}$ and let $\Ab \to M$ be a bundle of $C^*$-algebras affiliated with $\Hb$. Then we have
\[
	K^0_{\Hb}(M) \ \simeq\ K_0(C(M,\Ab))\ .
\]
\end{theorem}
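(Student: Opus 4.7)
The plan is to use matrix stability to realize $\Hb$ concretely via the principal $\Aut{M_n(A)}$-bundle $P \to M$ underlying $\Ab$, and then to show that twisted Morita modules for this representative descend canonically to finitely generated projective $\Ab$-module bundles over $M$, matching the Serre-Swan picture of $K_0(C(M,\Ab))$.

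First I would reduce to a convenient representative. By Definition \ref{def:matrixstable} there is a principal $\Aut{M_n(A)}$-bundle $P \to M$ with $\Ab = P \times_{\rm Ad} M_n(A)$ whose class in $\check{H}^1(M, \Aut{M_n(A)})$ maps to $[\Hb] \in \Tw{M}{A}$. Using Morita invariance of the twist sets (Lemma \ref{lem:MoritaInvariance}) to pass from $A$ to $M_n(A)$, together with Theorem \ref{thm:transfer}, I may replace $\Hb$ by the concrete representative $\Hb_P = P^{[2]} \times (M_n(A))_g$ of Example \ref{ex:groupaction}, with $\varphi = \id{\Aut{M_n(A)}}$ and $g \colon P^{[2]} \to \Aut{M_n(A)}$ the gauge cocycle, since these reductions induce additive equivalences of the relevant categories of finitely generated projective twisted Morita modules.

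Next I would construct the comparison map $\Phi \colon K^0_\Hb(M) \to K_0(C(M,\Ab))$. A finitely generated projective twisted Morita module $\Eb \to P$ for $\Hb_P$ is equipped with an action $\gamma \colon \pi_1^*\Eb \otimes_{M_n(A)} \Hb_P \to \pi_2^*\Eb$, which on fibers is a $g(p_1,p_2)$-twisted right $M_n(A)$-linear isomorphism $\Eb_{p_1} \to \Eb_{p_2}$, equivalently a Banach bundle isomorphism $\pi_2^*\Eb \to \pi_1^*\Eb$ whose associativity is precisely the hypothesis of Lemma \ref{lem:descentlemma}. Thus $\Eb$ descends to a bundle $\widetilde{\Eb} \to M$. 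The crucial point of the twisting by $g$ is that while the pointwise right action of $M_n(A)$ does \emph{not} descend, the twist converts it into a well-defined right action of $\Ab = P \times_{\rm Ad} M_n(A)$ on $\widetilde{\Eb}$, because $\Ab$ is by definition the descent of the trivial algebra bundle $P \times M_n(A)$ under exactly the same gauge action. By Lemma \ref{lem:embTrivial}, any such $\Eb$ is a direct summand of a trivial Morita module of the form $P \times (M_n(A))^N$, whose descent is $\Ab^N$; hence $\widetilde{\Eb}$ is a direct summand of $\Ab^N$ as an $\Ab$-bundle over $M$. Passing to continuous sections and invoking the Serre-Swan theorem then yields a finitely generated projective right $C(M,\Ab)$-module, functorially and additively in $\Eb$, producing the homomorphism $\Phi$.

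For the inverse I would send a finitely generated projective right $\Ab$-module bundle $\mathcal{V} \to M$ to $\pi^*\mathcal{V} \to P$, equipped with its tautological $\Hb_P$-action coming from the change of identification $\Ab_{\pi(p)} \simeq M_n(A)$ as $p$ moves along the gauge groupoid $P^{[2]} \rightrightarrows P$. Checking that the two constructions are mutually inverse on isomorphism classes is then essentially a bookkeeping exercise using the universal property of the descent in Lemma \ref{lem:descentlemma}. The main obstacle will be precisely this matching of module structures in the descent step: one must verify that Lemma \ref{lem:descentlemma} produces not merely a Banach bundle but a locally trivial, finitely generated projective right $\Ab$-module bundle over $M$, so that the $g$-twist of the $M_n(A)$-action upstairs translates cleanly into the intrinsic $\Ab$-action downstairs. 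This is exactly the point at which Lemma \ref{lem:embTrivial} and the tautological identification $\Ab = P \times_{\rm Ad} M_n(A)$ come together to make the argument go through.
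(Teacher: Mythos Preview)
Your proposal is correct and follows essentially the same route as the paper: reduce to the concrete representative of Example~\ref{ex:groupaction}, use Lemma~\ref{lem:embTrivial} to embed into a trivial module, and match with projections in $C(M,M_N(\Ab))$.

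The only organizational difference is that where you descend the module bundle $\Eb$ itself to an $\Ab$-module bundle $\widetilde{\Eb}$ over $M$ and then invoke Serre--Swan, the paper instead stays upstairs on $P$: after embedding $\Eb \hookrightarrow \trivial{A^N}$ it takes the projection $t_\Eb \colon \trivial{A^N} \to \trivial{A^N}$ onto $\Eb$ as a Morita module endomorphism and applies Corollary~\ref{cor:cptDescent} directly to read off a projection in $C(M,\mmm{}{A}{\trivial{A^N}}) \simeq C(M,M_N(\Ab))$. The inverse goes the same way in both approaches, sending a projection $t$ (equivalently, an equivariant map $P \to M_N(A)$) to the subbundle $\Eb_t \subset P \times A^N$ cut out by $t$. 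The paper's variant has the minor advantage of avoiding the ``obstacle'' you flag --- checking that descent produces a locally trivial $\Ab$-module bundle --- since Corollary~\ref{cor:cptDescent} already packages the descent of the endomorphism bundle, and one never needs to talk about $\widetilde{\Eb}$ as such. Your version is a touch more geometric and makes the Serre--Swan identification explicit; both arrive at the same place with the same ingredients.
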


\begin{proof}\label{pf:Yeah}
By corollary \ref{cor:twistedKdepends} it suffices to give an isomorphism $K^0_{\Hb}(M) \simeq K_0(C(M,\Ab))$ for the case when $\Hb$ is of the form given in example \ref{ex:groupaction} since any matrix stable element can be obtained in that way. We will denote the fiber of $\Ab$ by $A$. Let $P$, $\varphi$ and $\Hb$ be as in example~\ref{ex:groupaction}. If $\Eb \to P$ is a twisted Morita module for $\Hb$, then by lemma \ref{lem:embTrivial} it embeds as a direct summand into $\trivial{A^N}$. Thus, there is a twisted Morita morphism
\[
	t_{\Eb} \colon \trivial{A^N} \to \trivial{A^N}\ ,
\] 
which projects to the fiber of $\Eb$ at every point. By corollary \ref{cor:cptDescent}, it corresponds to a projection in $C(M, \mmm{}{A}{\Eb}) \simeq C(M, M_N(\Ab))$. This construction is well-behaved with respect to direct sums, hence we get a homomorphism
\[
		K^0_{\Hb}(M) \to K_0(C(M,\Ab)) \quad ; \quad [\Eb] - [\Eb'] \mapsto [t_{\Eb}] - [t_{\Eb'}] \ .
\]
To see the inverse map, let $t \in C(M,M_N(\Ab))$ be a projection-valued section of $M_N(\Ab)$, which corresponds to a $G$-equivariant map $P \to M_N(A)$, i.e.\ $t(pg) = \varphi(g)^{-1}(t(p))$. Let 
\[
	\Eb_t = \left\{ (p,v) \in P \times A^N\ |\ t(p)v = v \right\}\ .
\]
Using the fact that norm-close projections are unitarily equivalent - like we already did in lemma \ref{lem:loctriv} - it is easy to see that $\Eb_t$ is a locally trivial bundle of finitely generated projective right Hilbert $A$-modules. It carries an action of $\Hb$ given by 
\[
	\gamma \colon \pi_1^*E \otimes_A \Hb \to \pi_2^*\Eb \quad ; \quad (p_1,p_2, v \otimes a) \mapsto \varphi(g(p_1,p_2))^{-1}(va)\ ,
\]
which indeed maps into $\pi_2^*\Eb$, since we have for $g_{12} = g(p_1,p_2)$:
\[
	t(p_2)\varphi(g_{12})^{-1}(va) = t(p_1g_{12})\varphi(g_{12})^{-1}(va) = \varphi(g_{12})^{-1}(t(p_1)va) = \varphi(g_{12})^{-1}(va)\ .
\]
Thus, $\Eb_t$ is a twisted Morita module. It is clear that these two constructions are inverse to each other in the sense that 
\[
	K_0(C(M,\Ab)) \to K^0_{\Hb}(M) \quad ; \quad [t] - [t'] \mapsto [\Eb_t] - [\Eb_{t'}]
\]
provides an inverse to the above homomorphism.
\end{proof}

\begin{example}
If $P \to M$ is a principal $PU(n) = \Aut{M_n(\C)}$-bundle, then the above construction boils down to the results in \cite{paper:KTheoryBGM}. More precisely: As we have seen, a Morita bundle gerbe can be turned into a regular $S^1$-bundle gerbe by tensoring with the canonical Morita equivalence $\C^n$ between $\C$ and $M_n(\C)$, i.e.
\[
	\Hb \mapsto \C^{n*} \otimes_{M_n(\C)} \Hb \otimes_{M_n(\C)} \C^n\ .
\]
Likewise, a twisted Morita module $\Eb \to P$ can be turned into a regular bundle gerbe module via 
\[
	\Eb \mapsto \Eb \otimes_{M_n(\C)} \C^n\ . 
\]
This is particularly interesting for the following twist: If $M$ is an oriented Riemannian manifold and $P$ the frame bundle associated to its tangent bundle, then we have a homomorphism
\[
	SO(n) \to \Aut{\C l(n)}\ .
\] 
Using the construction from example \ref{ex:groupaction}, we get an element $[\Hb] \in \Tw{M}{\C l(n)} \simeq H^3(M, \Z)$, which corresponds to the Bockstein of the second Stiefel-Whitney class $W_3(M) = \beta(w_2(M))$.
\end{example}

\section{Twisted $K$-theory via Cuntz Algebras}
The Cuntz algebra $\Cuntz{n}$ is the universal $C^*$-algebra generated by $n$ partial isometries, i.e.\ by elements $s_1, \dots s_n$ with relations $s_i^*s_i = 1$ and $\sum_{j=1}^n s_js_j^* = 1$ \cite{paper:CuntzAlg}. It was proven in \cite{paper:KTheoryOfCuntz} that $K_0(\Cuntz{n+1}) \simeq \Z/n\Z$ and $K_1(\Cuntz{n+1}) \simeq 0$. Therefore we can consider $K_0(C(M,\Cuntz{n+1}))$, the $K$-theory of the $C^*$-algebra of $\Cuntz{n+1}$-valued continuous functions. Evaluating on a point we see that this cohomology theory has the same coefficients as $K^0(M, \Z/n\Z)$ defined via homotopy theory. It follows from a description of the latter by Karoubi \cite[Exercise~6.18]{book:Karoubi} and the K\"unneth theorem for $K$-theory of operator algebras that these two actually agree. Thus, we get an operator algebraic description of $K$-theory with $\Z/n\Z$-coefficients. 

This has the advantage that $\Cuntz{n}$ carries a rich automorphism structure \cite{paper:AutoCuntz1, paper:AutoCuntz2}, which allows us to twist it. For example, the unitary group $U(n)$ acts on the generators in an obvious way via 
\[
	s_i \mapsto \sum_{j=1}^n u_{ij} s_j \ .
\]
It can be checked that this action extends to an automorphism of $\Cuntz{n}$ and that $U(n) \to \Aut{\Cuntz{n}}$ is continuous with respect to the point-norm topology. Therefore any principal $U(n)$-bundle $P$ over $M$ yields a twist $\Hb$ of $K$-theory with $\Z/(n-1)\Z$-coefficients and theorem~\ref{thm:Yeah} gives a geometric description of this group by twisted Morita modules. On the operator algebraic side we get $\Ab = P \times_{U(n)} \Cuntz{n}$ and $K_0(C(M,\Ab))$.

This gets even more interesting if we replace $\Cuntz{n}$ by the infinite Cuntz algebra $\Cuntz{\infty}$. $K$-theory has a priori more twists than just those classified by $H^3(M,\Z)$. The full classifying space is $BBU_{\otimes}$. Thus, the question arises, if there are geometric descriptions of these higher twists. The infinite Cuntz algebra seems like a good starting point. So, the question here would be: Are automorphisms of $\Cuntz{\infty}$ related to higher twistings of $K$-theory? 

Other applications might arise if we consider fields of $C^*$-algebras instead of locally trivial bundles. This would be closer to the work of Echterhoff, Nest and Oyono-Oyono \cite{paper:Echterhoff}.

\bibliographystyle{plain} 
\bibliography{Bundles}

\vspace{0.5cm}
\textsc{\small Mathematisches Institut, Westf\"alische Wilhelms-Universit\"at M\"unster, Ein\-stein\-stra\ss e 62, 48149 M\"unster, Germany}\\[0.1cm]
\textit{\small E-mail: u.pennig@uni-muenster.de}
\end{document}